\numberwithin{equation}{section}
\theoremstyle{plain}
\newtheorem{thm}{Theorem}[section]
\newtheorem{lem}[thm]{Lemma}
\newtheorem{prop}[thm]{Proposition}
\newtheorem{claim}{Claim}
\theoremstyle{definition}
\newtheorem{df}[thm]{Definition}
\newtheorem{prob}[thm]{Problem}
\definecolor{myred}{cmyk}{0.2,1,1,0.3}
\definecolor{mygreen}{cmyk}{1,0.4,1,0.3}
\definecolor{myblue}{cmyk}{1,0.5,0.4,0.3}
\definecolor{mypurple}{cmyk}{0.5,1,0.5,0.3}
\definecolor{mygrey}{cmyk}{0,0,0,0.8}
\tikzset{%
 % elements
 shaded/.style={draw, shape=circle, fill=black!35, inner sep=1.4pt},
 unshaded/.style={draw, shape=circle, fill=white, inner sep=1.4pt},
 % lines
 order/.style={thin},
 % labels
 label/.style={shape=rectangle, inner sep=6pt},
 auto,
 % arrows
 map/.style={->, densely dashed, shorten >=5pt, shorten <=5pt, >=stealth', looseness=1.1},
 curvy/.style={->, densely dashed, shorten >=5pt, shorten <=12pt, >=stealth', looseness=1.1},
 operationgj/.style={->, densely dashed, shorten >=5pt, shorten <=20pt, >=stealth', looseness=1.1},
 relationlejk/.style={->, shorten >=5pt, shorten <=5pt, >=stealth'},
 }
\newcommand{\eusbA}{\medsub e {\kern-0.75pt\A\kern-0.75pt}}
\newcommand{\cat}[1]{\boldsymbol{\mathscr{#1}}}
\newcommand{\CCD}{\cat D}
\newcommand{\CP}{\cat P}
\newcommand{\CM}{\cat M}
\newcommand{\CV}{\cat V}
\newcommand{\CX}{\cat X}
\newcommand{\CY}{\cat Y}
\font\bmi=cmmi8 scaled 1440
\newcommand{\powerset}{\raise.6ex\hbox{\bmi\char'175 }}
\newcommand{\twiddle}[1]{\mathbb{#1}} %new twiddle
\newcommand{\MT}{\twiddle M}
\newcommand{\Tp}{\mathscr{T}}
\newcommand{\A}{\mathbf A}
\newcommand{\B}{\mathbf B}
\newcommand{\C}{\mathbf C}
\newcommand{\K}{\mathbf K}
\newcommand{\F}{\mathbf F}
\newcommand{\M}{\mathbf M}
\renewcommand{\P}{\mathbf P}
\newcommand{\Q}{\mathbf Q}
\newcommand{\T}{\mathbf T}
\newcommand{\X}{\mathbb X}
\newcommand{\Y}{\mathbb Y}
\newcommand{\Ys}{\Y\kern -2pt _s}
\renewcommand{\t}{\mathrm t} %for the truth order
\renewcommand{\k}{\mathrm k} %for the knowledge order (to avoid confusion with $a_k$, for example.)
 \DeclareMathOperator{\graph}{graph}
 \DeclareMathOperator{\id}{id}
 \DeclareMathOperator{\Var}{Var}
 \DeclareMathOperator{\Sub}{Sub} 
 \DeclareMathOperator{\rank}{rnk} 
 \DeclareMathOperator{\ISP}{\mathsf{ISP}}
 \DeclareMathOperator{\HSP}{\mathsf{HSP}}
 \DeclareMathOperator{\IScP}{{\mathsf{IS} _{\mathrm{c}}
 \mathsf{P}^+}}%%with plus (the default)
\newcommand{\comp}{{\setminus}}
\newcommand{\rest}[1]{{\upharpoonright}_{#1}}
\newcommand{\bigand}{\mathop{\bigwedge\kern -8.5truept \bigwedge}}%bd
\newcommand{\Bigand}{\mathop{\bigwedge\kern -10truept \bigwedge}}%bd
\newcommand{\littleand}{\mathbin{\wedge\kern -8truept \wedge}}%bd
\newcommand{\bigor}{\mathop{\bigvee\kern -8.5truept \bigvee}}%bd
\newcommand{\Bigor}{\mathop{\bigvee\kern -10truept \bigvee}}%bd
\newcommand{\littleor}{\mathbin{\vee\kern -8truept \vee}}%bd
\renewcommand{\le}{\leqslant}
\newcommand{\nle}{\nleqslant}
\renewcommand{\ge}{\geqslant}
\newcommand{\lez}{\leqslant^0}
\newcommand{\gez}{\geqslant^0}
\newcommand{\lej}{\leqslant^j}
\newcommand{\lek}{\leqslant^k}
\newcommand{\lel}{\leqslant^\ell}
\newcommand{\gek}{\geqslant^k}
\newcommand{\leP}{\preccurlyeq} 
\newcommand{\geP}{\succcurlyeq} 
\newcommand{\lePY}{\le} 
\newcommand{\nlePY}{\not\le} 
\newcommand{\lejl}{\leqslant^{j\ell}}
\newcommand{\leji}{\leqslant^{ji}}
\newcommand{\lekl}{\leqslant^{k\ell}}
\newcommand{\leil}{\leqslant^{i\ell}}
\newcommand{\lejk}{\leqslant^{jk}}
\newcommand{\lekk}{\leqslant^{kk}}
\newcommand{\gekj}{\geqslant^{kj}}
\newcommand{\nlejk}{\nleqslant^{jk}}
\newcommand{\nlez}{\nleqslant^0}
\newcommand{\nlek}{\nleqslant^{k}}
\newcommand{\du}{\mathbin{\dot\cup}}
\newcommand{\up}{{\uparrow}}
\newcommand{\down}{{\downarrow}}
\newcommand{\lsem}{[\kern-1.75pt[}
\newcommand{\rsem}{]\kern-1.75pt]}
\newcommand{\conv}{{}^{\kern2pt\raise-3pt\hbox{$\breve{}$}}} 
\newcommand{\convsub}[1]{\rlap{$^{\kern2pt\raise-3pt\hbox{$\breve{}$}}$}{}_{#1}} 
\newcommand{\Rsub}[1]{\mathcal R_{#1}}
\newcommand{\FOUR}{\mathcal{FOUR}}
\newcommand{\JB}{\mathbf{J}}
\newcommand{\mbf}{\boldsymbol{f}}
\newcommand{\mbt}{\boldsymbol{t}}
\newcommand{\bsF}{F} %bd added
\newcommand{\bsT}{T} %bd added
\newcommand{\bF}{F_{\kern -1pt n}} %bd added
\newcommand{\bT}{T_{\kern -1pt n}} %bd added
\newcommand{\bFA}{\mathbf F_{\kern -1pt \A}} %bd added
\newcommand{\bFB}{\mathbf F_{\kern -1pt \B}} %bd added
\newcommand{\bTB}{\mathbf T_{\kern -1pt \B}} %bd added
\newcommand{\bFC}{\mathbf F_{\kern -1pt \C}} %bd added
\newcommand{\bTC}{\mathbf T_{\kern -1pt \C}} %bd added
\newcommand{\bFn}{\mathbf F_{\kern -1pt n}} %bd added
\newcommand{\bTn}{\mathbf T_{\kern -1pt n}} %bd added
\newcommand{\two}{\boldsymbol 2}
\newcommand{\one}{\boldsymbol 1}
\newcommand{\zero}{\boldsymbol 0}
\begin{document}

%%%%%%%%%%%%%%%%%%%%%%%%%%%%%%%%%%%%%%%%%%%%%%%%%%%%%%%%%%%%%%%%%%%%%%
%% FRONT MATTER
%%%%%%%%%%%%%%%%%%%%%%%%%%%%%%%%%%%%%%%%%%%%%%%%%%%%%%%%%%%%%%%%%%%%%%

\title[Expanding Belnap 2]{Expanding Belnap 2: the dual category\\ in depth}

%% First author: in the order \author, \address, \urladdr, \email
\author[A. P. K. Craig]{Andrew P. K. Craig}
\address{Department of Mathematics\\
and Applied Mathematics\\
University of Johannesburg\\PO Box 524, Auckland Park, 2006\\South~Africa\\
and\\
Department of Mathematics and Statistics\\La Trobe University\\Victoria 3086\\Australia
}
\email{acraig@uj.ac.za}

%% Second author: in the order \author, \address, \urladdr, \email
\author[B. A. Davey]{Brian A. Davey}
\address{Department of Mathematics and Statistics\\La Trobe University\\Victoria 3086\\Australia}
\email{b.davey@latrobe.edu.au}

%% Third author: in the order \author, \address, \urladdr, \email
\author[M. Haviar]{Miroslav Haviar}
\address{Department of Mathematics\\Faculty of Natural Sciences,\\ 
M. Bel University\\Tajovsk\'eho 
40, 974~01 Bansk\'a Bystrica\\Slovakia\\
and\\
Department of Mathematics\\
and Applied Mathematics\\
University of Johannesburg\\PO Box 524, Auckland Park, 2006\\South~Africa
}
\email{miroslav.haviar@umb.sk}

\subjclass{06D50, 08C20, 03G25}

%% Key words and phrases
\keywords{bilattice, default bilattice, natural duality, multi-sorted natural duality, Priestley duality, piggyback duality}

\begin{abstract}
Bilattices, which provide an algebraic tool for simultaneously
modelling knowledge and truth, were introduced by N.\,D.~Belnap in a
1977 paper entitled \emph{How a computer should think}.
Prioritised default bilattices include not only Belnap's four values,
 for `true' ($\mbt$),
`false'($\mbf$), `contradiction'  ($\top$) and `no information' ($\bot$),
 but also indexed families of default values for simultaneously modelling degrees of knowledge and truth.
Prioritised default bilattices have applications in a number of areas including artificial intelligence.

In our companion paper, we introduced a new family of
prioritised default bilattices, $\mathbf J_n$, for $n \in \omega$,
with $\mathbf J_0$ being Belnap's seminal example. We gave
a duality for the variety $\CV_n$ generated by $\JB_n$, with the objects of the dual category $\CX_n$ being multi-sorted topological structures.

Here we study the dual category in depth. We give an axiomatisation of the category $\CX_n$ and show that it is isomorphic to a category $\CY_n$ of single-sorted topological structures. The objects of $\CY_n$ are Priestley spaces endowed with a continuous retraction in which the order has a natural ranking. We show how to construct the Priestley dual of the underlying bounded distributive
lattice of an algebra in $\CV_n$ via its dual in $\CY_n$; as an application we show that the size of the free algebra $\F_{\CV_n}(1)$ is given by a polynomial in $n$ of degree~$6$.
\end{abstract}

\thanks{The first author acknowledges the support of the National Research Foundation of South Africa (grant 127266) and the third author acknowledges the support of Slovak grant VEGA 1/0337/16.}

\date{\emph{Dedicated to the memory of Moshe S. Goldberg}}

\vspace{-2cm}

\maketitle

%%%%%%%%%%%%%%%%%%%%%%%%%%%%%%%%%%%%%%%%%%%%%%%%%%%%%%%%%%%%%%%%%%%%%%
%% MAIN MATTER
%%%%%%%%%%%%%%%%%%%%%%%%%%%%%%%%%%%%%%%%%%%%%%%%%%%%%%%%%%%%%%%%%%%%%%

%to keep Figure 1 on the next page 
\vspace{-0.9cm}

\section{Introduction}\label{sec:intro}

In our companion paper~\cite{CDH19}, we introduced a new class
$\{\, \JB_n \mid n \in \omega\,\}$ of default bilattices
for use in prioritised default logic. The first of these
bilattices,~$\JB_0$, is Belnap's famous four-element bilattice known
as $\FOUR$~\cite{Bel-think}, while for $n \geqslant 1$, the
bilattice $\JB_n$ provides a new algebraic structure for dealing
with inconsistent and incomplete information.
The importance of our prioritised default bilattices in comparison
with those previously studied is that in our family there is no
distinction between the level at which the contradictions or
agreements take place. Any contradictory response that includes some
level of truth ($\mbt_i$) and some level of falsity ($\mbf_j$) is
registered as a total contradiction ($\top$) and a total lack of
consensus ($\bot$). This can lead to improvements in existing
applications of default bilattices. 

Default bilattices, and more generally prioritised default bilattices, have a rich history. We will mention just a few examples.
A logic for default reasoning was introduced by Reiter~\cite{R80}.
The distinction between definite consequences and default
consequences, as well as the notion of inference using bilattices,
were discussed by Ginsberg~\cite{Gins86, Gins88}. He also considered
hierarchies of defaults, pointing out that there is no reason to
assume that the `levels' of default information are discrete.
Prioritised default bilattices now have many applications in
artificial intelligence. Sakama~\cite{S05} studied default theories
based on a 10-valued bilattice and applications to inductive logic
programming. Shet, Harwood and Davis~\cite{Shet2006} proposed a
prioritised multi-valued default logic for identity maintenance in
visual surveillance. Encheva and Tumin~\cite{ET07} applied default
logic based on a 10-element default bilattice in an intelligent
tutoring system as a way of resolving problems with contradictory or
incomplete input.

Bilattices are algebras $\A = \langle A; \otimes, \oplus,
\wedge,\vee, \neg  \rangle$  with two lattice structures, a
knowledge lattice $\A_\k = \langle A; \otimes, \oplus \rangle$, with
associated \emph{knowledge order}~$\le_\k$, and a truth lattice
$\A_\t = \langle A; \wedge,\vee \rangle$, with associated
\emph{truth order}~$\le_\t$, along with an involutive
negation~$\neg$ which is an order automorphism of $\A_\k$ and a dual
order automorphism of~$\A_\t$. Our algebra $\JB_n$ is a \emph{prioritised
default bilattice} as it is equipped with two hierarchies of nullary
operations, $\mbt_i$ and $\mbf_i$, that represent, respectively,
true and false by default. We refer the reader to~\cite{CDH19} for
motivation and background on bilattices in general and prioritised
default bilattices in particular.

In our approach we address mathematical rather than logical aspects
of our prioritised default bilattices. The lack of the much-used
product representation in our context led us to develop a concrete
representation via the theory of natural dualities.
In~\cite{CDH19},
we presented a natural duality between the variety $\CV_n$ generated
by $\JB_n$ and a category $\CX_n$ of multi-sorted topological
structures; see Theorem~\ref{cor:bigmultiduality} below. Our aim
in the present paper is to flesh out the dual category $\CX_n$. 
We begin by giving an axiomatisation of the multi-sorted category~$\CX_n$
(Theorem~\ref{thm:dualcategory}) and then describe an isomorphic
category $\CY_n$ of single-sorted topological structures
(Definition~\ref{df:CYn} and Theorem~\ref{thm:catequiv}). The
objects of $\CY_n$ are Priestley spaces endowed with a continuous
retraction in which the order has a natural ranking. In
Section~\ref{sec:Prdual} we describe the Priestley dual
$\mathrm H(\A^\flat)$ of the underlying bounded distributive
lattice $\A^\flat$ of an algebra $\A$ in $\CV_n$ (Theorem~\ref{thm:trans}). As an application of Theorem~\ref{thm:trans} we show that the size of the free algebra $\F_{\CV_n}(1)$ is given by a polynomial in $n$ of degree~$6$ (Theorem~\ref{thm:sizefree}). Section~\ref{sec:piggy} is devoted to the proof of Theorem~\ref{thm:trans}.

%%%%%%%%%%%%%%%%%%%%%%%%%%%%%%%%%%%%%%%%%%%%%%%%%%%%%%%%%%%%%%%
\section{The algebra $\JB_n$ and the variety it generates.}\label{sec:prelim}
%%%%%%%%%%%%%%%%%%%%%%%%%%%%%%%%%%%%%%%%%%%%%%%%%%%%%%%%%%%%%%%%
In this section we introduce the algebras $\JB_n$, for $n\in \omega$, and recall from~\cite{CDH19} some properties of the variety $\CV_n =\Var(\JB_n)$
generated by~$\JB_n$. Since such intervals occur very frequently in our work, we will use interval notation restricted to the integers; thus we define
\[
[m, n] := \{k\in \mathbb Z \mid m \le k \le n\},
\]
for $m, n\in \mathbb Z$ with $m\le n$.

\begin{df} 
For each $n\in \omega$, let $J_n = \{ \top, \mbf_0, \dots, \mbf_n,\mbt_0, \dots, \mbt_n, \bot \}$. Define the knowledge order, $\le_\k$, and truth order, $\le_\t$, on $J_n$ as in Figure~\ref{fig:Jn}.

\begin{figure}[th]
\centering
\begin{tikzpicture}[scale=0.5]
% Main label for J_n
\begin{scope}[xshift=3.5cm]
  \node[label,anchor=base] at (0,-2.5) {$\JB_n$};
\end{scope}
% Knowledge order of J_n
\begin{scope}
  % Elements
  \node[unshaded] (bot) at (0,0) {};
  \node[unshaded] (fn) at (-1,1) {};
  \node[unshaded] (tn) at (1,1) {};
  \node at (-1,2.2) {$\vdots$};
  \node at (1,2.2) {$\vdots$};
  \node[unshaded] (f1) at (-1,3) {};
  \node[unshaded] (t1) at (1,3) {};
  \node[unshaded] (f0) at (-1,4) {};
  \node[unshaded] (t0) at (1,4) {};
  \node[unshaded] (top) at (0,5) {};
  % Order
  \draw[order] (bot) -- (fn) -- ($(fn)+(0,0.5)$);
  \draw[order] (bot) -- (tn) -- ($(tn)+(0,0.5)$);
  \draw[order] ($(f1)+(0,-0.5)$) -- (f1) -- (f0) -- (top);
  \draw[order] ($(t1)+(0,-0.5)$) -- (t1) -- (t0) -- (top);
  % Labels
  \node[label,anchor=west,yshift=-3pt] at (bot) {$\bot$};
  \node[label,anchor=east] at (fn) {$\mbf_n$};
  \node[label,anchor=west] at (tn) {$\mbt_n$};
  \node[label,anchor=east] at (f1) {$\mbf_1$};
  \node[label,anchor=west] at (t1) {$\mbt_1$};
  \node[label,anchor=east] at (f0) {$\mbf_0$};
  \node[label,anchor=west] at (t0) {$\mbt_0$};
  \node[label,anchor=west,yshift=3pt] at (top) {$\top$};
  \node[label,anchor=base] at (0,-1.5) {$\le_\k$};
\end{scope}
% Truth order of J_n
\begin{scope}[xshift=7cm]
  % Elements
  \node[unshaded] (f0) at (0,0) {};
  \node[unshaded] (f1) at (0,1) {};
  \node at (0,2.2) {$\vdots$};
  \node[unshaded] (fn) at (0,3) {};
  \node[unshaded] (top) at (-1,4) {};
  \node[unshaded] (bot) at (1,4) {};
  \node[unshaded] (tn) at (0,5) {};
  \node at (0,6.2) {$\vdots$};
  \node[unshaded] (t1) at (0,7) {};
  \node[unshaded] (t0) at (0,8) {};
  % Order
  \draw[order] (f0) -- (f1) -- ($(f1)+(0,0.5)$);
  \draw[order] ($(fn)+(0,-0.5)$) -- (fn) -- (top) -- (tn) -- ($(tn)+(0,0.5)$);
  \draw[order] (fn) -- (bot) -- (tn);
  \draw[order] ($(t1)+(0,-0.5)$) -- (t1) -- (t0);
  % Labels
  \node[label,anchor=west] at (bot) {$\bot$};
  \node[label,anchor=west] at (fn) {$\mbf_n$};
  \node[label,anchor=west] at (tn) {$\mbt_n$};
  \node[label,anchor=west] at (f1) {$\mbf_1$};
  \node[label,anchor=west] at (t1) {$\mbt_1$};
  \node[label,anchor=west] at (f0) {$\mbf_0$};
  \node[label,anchor=west] at (t0) {$\mbt_0$};
  \node[label,anchor=east] at (top) {$\top$};
  \node[label,anchor=base] at (0,-1.5) {$\le_\t$};
  \end{scope}
\end{tikzpicture}

\caption{The knowledge order ($\le_\k$) and truth order ($\le_\t$) on the bilattice $\JB_n$.}\label{fig:Jn}
\end{figure}
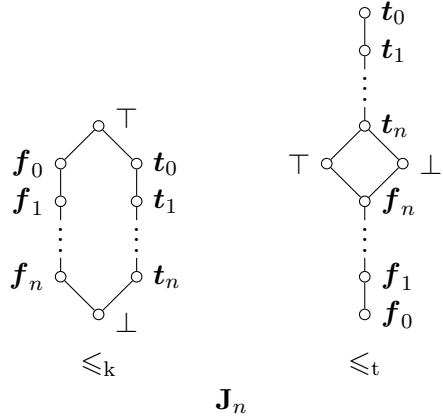

A unary involutive operation $\neg$ that preserves the $\le_\k$-order
and reverses the $\le_\t$-order on $J_n$ is given by:
\[
\neg\top = \top, \quad \neg\bot = \bot, \quad  \neg\mbf_m = \mbt_m \text{ and } \neg\mbt_m = \mbf_m, \text{ for all } m\in [0, n].
\]
We then add every element of $J_n$ as a constant to obtain the prioritised default bilattice
\[
\JB_n = \langle J_n; \otimes, \oplus, \wedge,\vee, \neg, \top,
\mbf_0, \dots, \mbf_n,\mbt_0, \dots, \mbt_n, \bot \rangle,
\]
where $\otimes$ and $\oplus$ are greatest lower bound and least upper bound in the knowledge order $\le_\k$, and $\wedge$ and $\vee$ are greatest lower bound and least upper bound in the truth order $\le_\t$.
\end{df}

Belnap's four-element bilattice, $\FOUR$, is isomorphic to $\JB_0$, and the bilattice $\JB_n$ is obtained from Belnap's bilattice by replacing each of the truth values $\mbf$ and $\mbt$ by chains of truth values of length~$n$.

For all $n\in \omega$, let $\CV_n =\HSP(\JB_n)$ be the variety generated by $\JB_n$. We now quote from~\cite{CDH19} the description of the subdirectly irreducible algebras in~$\CV_n$.

\begin{df}\label{def:M0}
Fix $n\in \omega$. Let $\M_0$ be the algebra in the signature of $\JB_n$ whose reduct is isomorphic to $\JB_0$, as shown in Figure~\ref{fig:M0}, and in which
\begin{itemize}[itemsep=0pt] 

\item the constants $\mbf_0,\mbf_1,\ldots,\mbf_n$ take the value~$\mbf^0$, and

\item the constants
$\mbt_0,\mbt_1, \ldots, \mbt_n$ take the value~$\mbt^0$.
\end{itemize}

\begin{figure}[hbt]
\centering
\begin{tikzpicture}
% Knowledge order of M_0
\begin{scope}[scale=0.5,xshift=8cm]
  \node at (0,-1.5) {$\le_\t$};
  % Elements
  \node[unshaded] (bot) at (0,0) {};
  \node[unshaded] (f) at (-1,1) {};
  \node[unshaded] (t) at (1,1) {};
  \node[unshaded] (top) at (0,2) {};
  % Order
  \draw[order] (bot) -- (f) -- (top);
  \draw[order] (bot) -- (t) -- (top);
  % Labels
  \node[label,anchor=west,yshift=-3pt] at (bot) {$\mbf^0$};
  \node[label,anchor=east] at (f) {$\top^0$};
  \node[label,anchor=west] at (t) {$\bot^0$};
  \node[label,anchor=west,yshift=3pt] at (top) {$\mbt^0$};
\end{scope}
% Truth order of M_0
\begin{scope}[scale=0.5]
  \node at (0,-1.5) {$\le_\k$};
  % Elements
  \node[unshaded] (bot) at (0,0) {};
  \node[unshaded] (f) at (-1,1) {};
  \node[unshaded] (t) at (1,1) {};
  \node[unshaded] (top) at (0,2) {};
  % Order
  \draw[order] (bot) -- (f) -- (top);
  \draw[order] (bot) -- (t) -- (top);
  % Labels
  \node[label,anchor=west,yshift=-3pt] at (bot) {$\bot^0$};
  \node[label,anchor=east] at (f) {$\mbf^0$};
  \node[label,anchor=west] at (t) {$\mbt^0$};
  \node[label,anchor=west,yshift=3pt] at (top) {$\top^0$};
\end{scope}
\end{tikzpicture}
\caption{The $\JB_0$-reduct of $\M_0$.} \label{fig:M0}
\end{figure}
\end{df}

\begin{df}\label{def:Mk}
Let $n\in \omega\setminus\{0\}$ and let $k\in [1, n]$. Define $\M_k$ to be the algebra in the signature of $\JB_n$ that has bilattice reduct
isomorphic to the bilattice reduct of~$\JB_1$, as shown in Figure~\ref{fig:J1nm}, and in which
\begin{itemize}[itemsep=0pt] 

\item the constants $\mbf_0,\ldots,\mbf_{k-1}$ take the value $\zero^k$  and the constants $\mbf_k, \ldots, \mbf_n$ take the value~$\mbf^k$, while

\item the constants $\mbt_0,\ldots,\mbt_{k-1}$ take the value~$\one^k$  and the constants $\mbt_k, \ldots, \mbt_n$ take the value~$\mbt^k$.
\end{itemize}
Clearly, $\M_0$ is term equivalent to $\JB_0$, and
$\M_k$ is term equivalent to $\JB_1$, for all $k\in[1, n]$.

\begin{figure}[ht]
\centering
\begin{tikzpicture}
% Knowledge order of J_1,n,k
\begin{scope}[scale=0.5]
  \node at (0,-1.5) {$\le_\k$};
    % Elements
  \node[unshaded] (bot) at (0,0) {};
  \node[unshaded] (f) at (-1,1) {};
  \node[unshaded] (t) at (1,1) {};
  \node[unshaded] (0) at (-1,2.4) {};
  \node[unshaded] (1) at (1,2.4) {};
  \node[unshaded] (top) at (0,3.4) {};
  % Order
  \draw[order] (bot) -- (f) -- (0) -- (top);
  \draw[order] (bot) -- (t) -- (1) -- (top);
  % Labels
  \node[label,anchor=west,yshift=-3pt] at (bot) {$\bot^k$};
  \node[label,anchor=east] at (f) {$\mbf^k$};
  \node[label,anchor=west] at (t) {$\mbt^k$};
  \node[label,anchor=east] at (0) {$\zero^k$};
  \node[label,anchor=west] at (1) {$\one^k$};
  \node[label,anchor=west,yshift=3pt] at (top) {$\top^k$};
\end{scope}
% Truth order of J_1,n,k
\begin{scope}[scale=0.5,xshift=8cm]
  \node at (0,-1.5) {$\le_\t$};
  % Elements
  \node[unshaded] (0) at (0,0) {};
  \node[unshaded] (f) at (0,1.4) {};
  \node[unshaded] (top) at (-1,2.4) {};
  \node[unshaded] (bot) at (1,2.4) {};
  \node[unshaded] (t) at (0,3.4) {};
  \node[unshaded] (1) at (0,4.8) {};
  % Order
  \draw[order] (0) -- (f) -- (top) -- (t) -- (1);
  \draw[order] (f) -- (bot) -- (t);
  % Labels
  \node[label,anchor=west] at (bot) {$\bot^k$};
  \node[label,anchor=west,yshift=-2pt] at (f) {$\mbf^k$};
  \node[label,anchor=west,yshift=2pt] at (t) {$\mbt^k$};
  \node[label,anchor=west] at (0) {$\zero^k$};
  \node[label,anchor=west] at (1) {$\one^k$};
  \node[label,anchor=east] at (top) {$\top^k$};
\end{scope}
\end{tikzpicture}
\caption{The  $\JB_1$-reduct of $\M_k$.}\label{fig:J1nm}
\end{figure}
\end{df}

\begin{prop}[{\cite[Prop.~2.5]{CDH19}}]\label{prop:thevariety}\ 
\begin{enumerate}[label=\textup{(\arabic*)},itemsep=0pt]

\item Up to isomorphism, the only subdirectly irreducible algebra in the variety $\CV_0=\HSP(\JB_0)$ is $\JB_0$ itself.

\item Let $n\in \omega\comp\{0\}$. Up to isomorphism,
the variety $\CV_n=\HSP(\JB_n)$ contains $n+1$ subdirectly irreducible algebras, namely the four-element algebra $\M_0$ and the six-element algebras $\M_k$, for $k\in [1, n]$.

\end{enumerate}

\end{prop}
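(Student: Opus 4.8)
The plan is to reduce the statement to a computation of the congruence lattice of the single finite algebra $\JB_n$. Every algebra in $\CV_n$ has a lattice reduct (indeed two), so $\CV_n$ is congruence distributive and J\'onsson's Lemma applies; since $\JB_n$ is finite, every subdirectly irreducible algebra in $\CV_n=\HSP(\JB_n)$ is therefore a homomorphic image of a subalgebra of $\JB_n$. The crucial simplification is that every element of $J_n$ has been adjoined to $\JB_n$ as a nullary operation, so $\JB_n$ is generated by $\varnothing$ and has no proper subalgebras; likewise each $\M_k$. Hence the subdirectly irreducible algebras in $\CV_n$ are precisely the non-trivial subdirectly irreducible quotients of $\JB_n$, and the whole problem is to describe $\mathrm{Con}(\JB_n)$.

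First I would check that, for each $k\in[0,n]$, the equivalence relation $\theta_k$ on $J_n$ with non-singleton blocks $\{\mbf_0,\dots,\mbf_{k-1}\}$, $\{\mbf_k,\dots,\mbf_n\}$, $\{\mbt_0,\dots,\mbt_{k-1}\}$, $\{\mbt_k,\dots,\mbt_n\}$ (empty blocks omitted, so that $\theta_0$ merges all the $\mbf_i$ into one class and all the $\mbt_i$ into another) is a congruence with $\JB_n/\theta_k\cong\M_k$. This is a routine verification against the five basic operations: along each of the two chains running through the $\mbf_i$ (respectively $\mbt_i$) in the knowledge and truth orders, the blocks of $\theta_k$ are intervals and so form a lattice congruence of the chain; $\theta_k$ is symmetric under $\neg$; and the only mixed instances left to inspect, namely $\mbf_i\wedge\mbt_j$, $\mbf_i\vee\mbt_j$, $\mbf_i\otimes\mbt_j=\bot$, $\mbf_i\oplus\mbt_j=\top$, and products involving $\top$ or $\bot$, are each constant in the relevant index or land in a singleton block. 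Once $\theta_k$ is known to be a congruence, the isomorphism $\JB_n/\theta_k\cong\M_k$ is immediate by comparing the orders with Figures~\ref{fig:M0} and~\ref{fig:J1nm}. Since $\bigcap_{k=0}^{n}\theta_k=\Delta_{\JB_n}$, this exhibits $\JB_n$ as a subdirect product of $\M_0,\dots,\M_n$, so each $\M_k$ lies in $\CV_n$ and $\CV_n=\HSP(\{\M_0,\dots,\M_n\})$.

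Next I would prove two collapsing lemmas for an arbitrary congruence $\theta$ of $\JB_n$. \textbf{(i)} If $\theta$ identifies some $\mbf_i$ with some $\mbt_j$ then $\theta=\nabla$: the polynomial $x\mapsto x\otimes\mbt_j$ gives $\bot\mathrel\theta\mbt_j$, the polynomial $x\mapsto x\oplus\mbf_i$ gives $\top\mathrel\theta\mbf_i$, hence $\top\mathrel\theta\mbf_i\mathrel\theta\mbt_j\mathrel\theta\bot$; and from $\top\mathrel\theta\bot$ the polynomials $x\mapsto x\otimes\mbf_\ell$ and $x\mapsto x\otimes\mbt_\ell$ (for $\ell\in[0,n]$) force $\mbf_\ell\mathrel\theta\bot$ and $\mbt_\ell\mathrel\theta\bot$, so $\theta=\nabla$. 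The same conclusion follows whenever $\theta$ identifies $\top$ or $\bot$ with any other element, via the analogous polynomials (for instance $\top\mathrel\theta\mbf_i$ yields $\mbt_0\mathrel\theta\bot$ on applying $x\mapsto x\otimes\mbt_0$, hence $\mbf_0\mathrel\theta\bot$ by $\neg$, hence $\mbf_0\mathrel\theta\mbt_0$, and (i) finishes the job). \textbf{(ii)} If $\theta$ identifies $\mbf_i$ with $\mbf_j$ for some $i<j$ then, applying $x\mapsto x\vee\mbf_{i+1}$ and similar polynomials, $\theta$ contains the congruence $\theta'$ whose only non-singleton blocks are $\{\mbf_i,\dots,\mbf_j\}$ and its $\neg$-image; and $\theta'$ is itself a congruence by the closure check above, so the principal congruence generated by $(\mbf_i,\mbf_j)$ equals $\theta'$. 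Taking joins, (i) and (ii) identify $\mathrm{Con}(\JB_n)$ as the lattice of interval partitions of the $(n+1)$-element $\mbf$-chain---with the $\mbt$-chain mirrored by $\neg$ and $\top,\bot$ kept apart---together with a new top element $\nabla$ adjoined.

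Finally I would read off the subdirectly irreducibles: the subdirectly irreducible quotients of a finite algebra are its quotients by the meet-irreducible congruences, that is, those with a unique cover. In the lattice just described $\nabla$ is excluded (its quotient is trivial); the one-block partition $\theta_0$ is meet-irreducible because its only cover is $\nabla$; and a partition into three or more blocks has at least two covers (merge either of two adjacent pairs of blocks), neither equal to $\nabla$. So the meet-irreducible congruences other than $\nabla$ are exactly $\theta_0$ together with the two-block partitions $\theta_1,\dots,\theta_n$, whose quotients are $\M_0,\M_1,\dots,\M_n$; and these algebras are pairwise non-isomorphic, since $\M_0$ has four elements whereas the others have six, and for distinct $k,k'\in[1,n]$ an isomorphism $\M_k\to\M_{k'}$ would have to respect the constants $\mbf_0,\mbf_1,\dots$, whose common value jumps to a new element at index $k$ in $\M_k$ but at index $k'$ in $\M_{k'}$. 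This gives part~(2). Part~(1) is the degenerate case $n=0$, where the $\mbf$-chain has one element, so $\mathrm{Con}(\JB_0)=\{\Delta,\nabla\}$, $\JB_0=\M_0$ is simple, and it is the only subdirectly irreducible algebra in $\CV_0$. The step that carries the real weight is the congruence computation---the two collapsing lemmas, plus the routine but lengthy check that every interval partition of the $\mbf$-chain is closed under all five operations; once $\mathrm{Con}(\JB_n)$ is in hand, extracting the subdirectly irreducibles is pure bookkeeping.
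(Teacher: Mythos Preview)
The present paper does not prove this proposition; it is quoted verbatim from the companion paper~\cite{CDH19}, so there is no proof here to compare against. That said, your argument is correct and is exactly the approach one would expect: J\'onsson's Lemma reduces the search for subdirectly irreducibles to $\mathrm{HS}(\JB_n)$, the absence of proper subalgebras (because every element is a constant) reduces this further to the quotients of $\JB_n$, and the explicit computation of $\mathrm{Con}(\JB_n)$ as the Boolean lattice of interval partitions of the $(n{+}1)$-element $\mbf$-chain with a new top $\nabla$ adjoined then yields the $n{+}1$ completely meet-irreducible proper congruences $\theta_0,\theta_1,\dots,\theta_n$ and hence the list $\M_0,\M_1,\dots,\M_n$.

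One small point of presentation: when you say ``the subdirectly irreducible quotients of a finite algebra are its quotients by the meet-irreducible congruences'', this is literally false when $\nabla$ is meet-irreducible (as it is here, with unique lower cover $\theta_0$). You do exclude $\nabla$ two sentences later, so the argument is sound, but it would be cleaner to say from the outset that the subdirectly irreducible quotients correspond to the completely meet-irreducible congruences \emph{other than} $\nabla$. Also, the ``routine but lengthy'' verification that every interval partition of the $\mbf$-chain is a congruence can be shortened: it suffices to check the $n$ principal congruences $\mathrm{Cg}(\mbf_i,\mbf_{i+1})$, since every interval partition is a join of these and the join of congruences is a congruence.
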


%%%%%%%%%%%%%%%%%%%%%%%%%%%%%%%%%%%%
\section{A natural duality for the variety $\CV_n$}\label{sec:CVduality}
%%%%%%%%%%%%%%%%%%%%%%%%%%%%%%%%%%%%

Fix $n\in \omega\comp\{0\}$. We now present the natural duality for the variety $\CV_n$ developed in our companion paper~\cite{CDH19}. We refer to~\cite{CDH19} for a fuller discussion and to Clark and Davey~\cite{CD98} and Davey and Talukder~\cite{DT-Fun} for all of the missing general details.

It follows from Proposition~\ref{prop:thevariety} that
\[
\CV_n = \ISP(\{\M_0, \dots, \M_n\});
\]
so it is natural to use a multi-sorted alter ego based on $\{\M_0,
\dots, \M_n\}$. We use a multi-sorted structure of the following kind:
\[
\MT_n = \langle M_0\du \cdots \du M_n; \mathcal G_{(n)}, \mathcal {S}_{(n)}, \Tp\rangle,
\]
where,
\begin{itemize}[itemsep=0pt] 

\item for each $g\in \mathcal G_{(n)}$, there exist $j, k \in [0, n]$, such that ${g\colon \M_j \to \M_k}$ is a homomorphism,

\item each relation $R\in
\mathcal {S}_{(n)}$ is a compatible relation from $\M_j$~to~$\M_k$, that is, a non-empty subuniverse of $\M_j \times \M_k$, for some
$j, k \in [0, n]$, and

\item $\Tp$ is the disjoint union topology obtained from the discrete topology on the sorts.
\end{itemize}
The precise choices of the sets $\mathcal G_{(n)}$ and $\mathcal {S}_{(n)}$ will be given in Theorem~\ref{cor:bigmultiduality} below.

Objects in the dual category are multi-sorted Boolean
topological structures $\X$ in the signature of~$\MT_n$. Thus,
\[
\X = \langle X_0 \du \cdots \du X_n ; \mathcal G_{(n)}^\X, \mathcal {S}_{(n)}^\X,
\Tp^\X\rangle,
\]
where each $X_j$ carries a Boolean (that is, compact and totally disconnected)
topology and
$\Tp^\X$ is the corresponding disjoint-union topology. If $g\colon \M_j\to \M_k$ is in $\mathcal G_{(n)}$, then the corresponding $g^\X\in
\mathcal G^\X$  is a continuous map $g^\X \colon X_j \to X_k$. 
If $R\in \mathcal S_{(n)}$ is a relation from $\M_j$ to $\M_k$, then the
corresponding $R^\X\in \mathcal S_{(n)}^\X$ is a topologically closed
subset of $X_j\times X_k$. (Typically, we will drop the superscripts from $\mathcal G_{(n)}^\X$, $\mathcal {S}_{(n)}^\X$ and $\Tp^\X$.) Given two such multi-sorted topological
structures $\X$ and $\Y$, a morphism $\varphi\colon \X \to \Y$ is a
continuous map that preserves sorts (so $\varphi(X_j) \subseteq
Y_j$, for all~$j$) and preserves the operations and relations.

For a non-empty set $S$, the power $\MT_n^S$ is defined in the natural sort-wise way; the underlying set of $\MT_n^S$ is $M_0^S \du \dots \du M_n^S$ and the operations and relations between the sorts are defined pointwise. The potential dual category is now defined to be the category
$\CX_n =\IScP(\MT_n)$ whose objects are isomorphic copies of topologically closed substructures of non-zero powers of~$\MT_n$ (where substructure has its natural multi-sorted meaning) and whose morphisms are the continuous structure-preserving maps.

Given an algebra $\A \in \CV_n$, its dual $\mathrm{D}(\A)\in \CX_n$ is defined to be
\[
\mathrm{D}(\A) := \CV_n(\A, \M_0) \du \dots \du \CV_n(\A, \M_n),
\]
as a closed substructure of $\MT_n^A$.
An important, but easily proved, fact is that, for every non-empty set~$S$, we have $\mathrm D((\F_{\CV_n}(S)) \cong \MT_n^S$ (see~\cite[Lemma~2.2.1 and p.~194]{CD98}).
Given $\X\in \CX_n$, its dual $\mathrm{E}(\X) \in \CV_n$ is defined to be
\[
\mathrm{E}(\X) := \CX_n(\X, \MT_n) \text{ as a subalgebra of } \M_0^{X_0} \times \dots \times \M_n^{X_n}.
\]
The fact that the structure on $\MT_n$ is compatible with the set
$\{\M_0,\ldots, \M_n\}$ guarantees that $\mathrm{E}(\X)$ is a
subalgebra of $\M_0^{X_0} \times \dots \times \M_n^{X_n}$ and hence
$\mathrm{E}$ is well defined. The functors ${\mathrm{D}\colon \CV_n \to \CX_n}$ and $\mathrm{E}\colon \CX_n \to
\CV_n$ are defined on homomorphisms in $\CV_n$ and on morphisms in $\CX_n$ in a completely natural way, yielding a dual adjunction $\langle \mathrm{D}, \mathrm{E}, e, \varepsilon\rangle$ between $\CV_n$ and $\CX_n$ with the unit $e_\A
\colon \A \to \mathrm{ED}(\A)$ and counit $\varepsilon_\X \colon \X
\to \mathrm{DE}(\X)$ given by evaluation. 
We say that $\MT_n$
\emph{yields a duality} (between $\CV_n$ and $\CX_n$) if $e_\A$ is
an isomorphism, for all $\A\in \CV_n$, that $\MT_n$ \emph{yields a
full duality} if, in addition, $\varepsilon_\X$ is an isomorphism,
for all $\X\in \CX_n$, and that $\MT_n$ \emph{yields a strong
duality} if $\MT_n$ yields a full duality and is injective
in~$\CX_n$. The duality is called \emph{optimal} if none of the
operations and relations in $\mathcal G_{(n)}\cup\mathcal R_{(n)}$
can be removed without destroying the duality, that is, if an
operation or relation in $\mathcal G_{(n)}\cup\mathcal R_{(n)}$ were
removed, then there would be an algebra $\A\in \CV_n$ such that
$e_\A\colon \A \to \mathrm{DE}(\A)$ is not an isomorphism (or
equivalently, is not surjective).
%%%%%%%%%%%%%%%%%%%%%%%%%%%%%%%%%%%%%%%%%%%%%%%%%

The duality established in our companion paper~\cite{CDH19}
uses the following multi-sorted operations and relations. For all $k\in [1, n]$, let $g_k \colon \M_j \to \M_0$ be the homomorphism that maps $\mbf^k$ and $\zero^k$ to $\mbf^0$ and maps $\mbt^k$ and $\one^k$ to $\mbt^0$, and for later convenience let $g_0$ be the identity map on~$\M_0$. Define $\lez$, $\lek$ (for $k\in [1, n]$), and $\lejk$ (for $j, k\in [1, n]$ with $j < k$) by
\begin{equation}\tag{$\dagger$}\label{lezetc}
\begin{aligned}
{\lez} &= \big(M_0 \times \{\top^0\}\big) \cup \big(\{\bot^0\} \times M_0\big) \cup \{(\mbf^0, \mbf^0), (\mbt^0, \mbt^0) \}, \\
{\lek} &= \{(\top^k, \top^k), (\bot^k, \bot^k) \} \cup \big(\{(\mbf^k, \mbf^k), (\mbf^k, \zero^k), (\zero^k, \zero^k)\}\big)\\
&\hspace*{5.0cm} \cup
\big(\{ (\mbt^k, \mbt^k), (\mbt^k,\one^k), (\one^k, \one^k)\}\big),\\
{\lejk} &= \{(\top^j, \top^k), (\bot^j, \bot^k)\} \cup \big(\{(\mbf^j, \mbf^k), (\mbf^j, \zero^k), (\zero^j, \zero^k)\}\big)\\
&\hspace*{5.0cm} \cup
\big(\{ (\mbt^j, \mbt^k), (\mbt^j,\one^k), (\one^j, \one^k)\}\big).
\end{aligned}
\end{equation}
Each of these relations is a compatible relation from $\M_i$ to~$\M_\ell$, for some $i\le \ell$ in $[0, n]$.
For all $k\in [0, n]$, the relation $\lek$ is an order; in particular, $\lez$ is the knowledge order on~$\M_0$. The relation $\lejk$ can be thought of as the order relation $\lej$ `stretched' from $M_j$ to $M_k$.  See Figure~\ref{fig:orders}.

\begin{figure}[ht]
\centering
\begin{tikzpicture}[scale=0.6]
% \le^0
\begin{scope}
  \node[anchor=base] at (0,-3.0) {$\lez$};
  % Elements
  \node[unshaded] (bot) at (0,0) {};
  \node[unshaded] (f) at (-1,1) {};
  \node[unshaded] (t) at (1,1) {};
  \node[unshaded] (top) at (0,2) {};
  % Order
  \draw[order] (bot) -- (f) -- (top);
  \draw[order] (bot) -- (t) -- (top);
  % Labels
  \node[label,anchor=north] at (bot) {$\bot^0$};
  \node[label,anchor=east] at (f) {$\mbf^0$};
  \node[label,anchor=west] at (t) {$\mbt^0$};
  \node[label,anchor=south] at (top) {$\top^0$};
\end{scope}
% \le^j
\begin{scope}[xshift=6cm]
    \node[anchor=base] at (-0.25,-3.0) {$\lej$};
  % Elements
  \node[unshaded] (topj) at (0,3.5) {};
  \node[unshaded] (botj) at (0,-1.5) {};
  \node[unshaded] (fj) at (0,-0.5) {};
  \node[unshaded] (0j) at (0,0.5) {};
  \node[unshaded] (tj) at (0,1.5) {};
  \node[unshaded] (1j) at (0,2.5) {};
  % Order
  \draw[order] (fj) -- (0j);
  \draw[order] (tj) -- (1j);
  % Labels
  \node[label,anchor=east] at (botj) {$\bot^j$};
  \node[label,anchor=east] at (fj) {$\mbf^j$};
  \node[label,anchor=east] at (tj) {$\mbt^j$};
  \node[label,anchor=east] at (0j) {$\zero^j$};
  \node[label,anchor=east] at (1j) {$\one^j$};
  \node[label,anchor=east] at (topj) {$\top^j$};
\end{scope}
% \le^k
\begin{scope}[xshift=9cm]
    \node[anchor=base] at (0.25,-3.0) {$\lek$};
  % Elements
  \node[unshaded] (topk) at (0,3.5) {};
  \node[unshaded] (botk) at (0,-1.5) {};
  \node[unshaded] (fk) at (0,-0.5) {};
  \node[unshaded] (0k) at (0,0.5) {};
  \node[unshaded] (tk) at (0,1.5) {};
  \node[unshaded] (1k) at (0,2.5) {};
  % Order
  \draw[order] (fk) -- (0k);
  \draw[order] (tk) -- (1k);
  % Labels
  \node[label,anchor=west] at (botk) {$\bot^k$};
  \node[label,anchor=west] at (fk) {$\mbf^k$};
  \node[label,anchor=west] at (tk) {$\mbt^k$};
  \node[label,anchor=west] at (0k) {$\zero^k$};
  \node[label,anchor=west] at (1k) {$\one^k$};
  \node[label,anchor=west] at (topk) {$\top^k$};
\end{scope}
% g_j arrows
\begin{scope}[xshift=3cm]
    \node[anchor=base] at (0,-3.0) {$g_j$};
  % Arrows
  \path (topj) edge [operationgj, bend right=15] node {} (top);
  \path (1j) edge [operationgj, bend right=15] node {} ($(t)+(3pt,10pt)$);
  \path (tj) edge [operationgj, bend right=15] node {} ($(t)+(8pt,10pt)$);
  \path (0j) edge [operationgj, bend left=15] node {} ($(f)+(3pt,3pt)$);
  \path (fj) edge [operationgj, bend left=15] node {} (f);
  \path (botj) edge [operationgj, bend left=15] node {} (bot);
\end{scope}
% \le^jk arrows
\begin{scope}[xshift=7.5cm]
    \node[anchor=base] at (0,-3.0) {$\lejk$};
  % Arrows
  \draw[relationlejk] (topj) -- (topk);
  \draw[relationlejk] (botj) -- (botk);
  \draw[relationlejk] (fj) -- (fk);
  \draw[relationlejk] (fj) -- (0k);
  \draw[relationlejk] (0j) -- (0k);
  \draw[relationlejk] (tj) -- (tk);
  \draw[relationlejk] (tj) -- (1k);
  \draw[relationlejk] (1j) -- (1k);
\end{scope}
\end{tikzpicture}
\caption{The map $g_j$ and the relations $\lez$, $\lek$ and $\lejk$.}\label{fig:orders}
\end{figure}
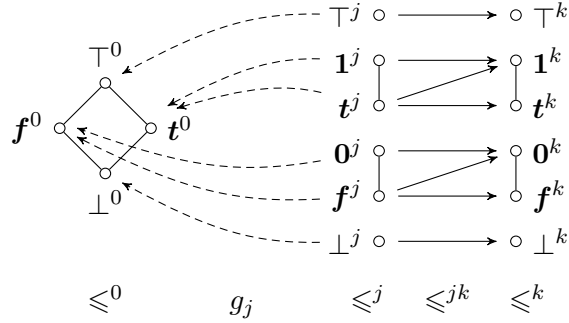

We can now state the main result of~\cite{CDH19}.

\begin{thm}[{\cite[Theorem 4.1]{CDH19}}]\label{cor:bigmultiduality}
Let $n\in \omega\comp\{0\}$. Define the multi-sorted alter ego
\[
\MT_n = \langle M_0\du \cdots \du M_n;
\mathcal G_{(n)}, \mathcal{S}_{(n)}, \Tp\rangle,
\]
where
\begin{align*}
\mathcal{G}_{(n)} &= \big\{\,g_k \mid k\in [1, n]\,\big\}, \text{ and}\\
\mathcal S_{(n)} &= \{\,{\lek} \mid k\in [0, n]\,\} \cup \big\{\,{\lejk} \mid j, k\in [1, n] \text{ with } j < k\,\big\}.
\end{align*}
The alter ego $\MT_n$ yields a strong, and therefore full,
optimal duality between $\CV_n = \HSP(\JB_n)$ $= \ISP(\{\M_0,
\dots, \M_n\})$ and $\CX_n = \IScP(\MT_n)$. 
\end{thm}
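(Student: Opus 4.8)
I would obtain the duality first in a crude form and then pare it down. Each $\M_k$ has a (distributive) lattice reduct, so each carries a majority term operation --- say the knowledge median $m(x,y,z)=(x\otimes y)\oplus(x\otimes z)\oplus(y\otimes z)$ --- and hence $\{\M_0,\dots,\M_n\}$ has a common near-unanimity term and $\CV_n$ is congruence distributive. By the multi-sorted Near-Unanimity Strong Duality Theorem (see \cite{CD98,DT-Fun}), the alter ego $\MT_n^{\mathrm f}=\langle M_0\du\cdots\du M_n;\mathcal G^{\mathrm f},\mathcal S^{\mathrm f},\Tp\rangle$, whose operations $\mathcal G^{\mathrm f}$ are \emph{all} homomorphisms among the $\M_k$ and whose relations $\mathcal S^{\mathrm f}$ are \emph{all} subuniverses of the products $\M_j\times\M_k$, yields a strong, hence full, duality between $\CV_n$ and $\IScP(\MT_n^{\mathrm f})$.

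The real work is to show that $\MT_n^{\mathrm f}$ and $\MT_n$ are \emph{entailment equivalent}, so that, by the general theory (\cite{CD98}), $\MT_n$ also yields a strong, full, duality. Since $\MT_n\subseteq\MT_n^{\mathrm f}$ one direction is trivial, and the task is to show that $\mathcal G_{(n)}\cup\mathcal S_{(n)}$ entails everything in $\mathcal G^{\mathrm f}\cup\mathcal S^{\mathrm f}$. On the operation side this is easy: because $\M_k$ has no proper subalgebra (all six elements are named by constants) and $\M_j\not\cong\M_k$ for $j\neq k$, one checks that $g_k$ is the \emph{only} homomorphism $\M_k\to\M_0$ and that there are no other non-identity homomorphisms among the $\M_k$, so $\mathcal G^{\mathrm f}=\mathcal G_{(n)}\cup\{\text{identities}\}$. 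On the relation side one must show that every subuniverse of every $\M_j\times\M_k$ is entailed by $\lek$, $\lejk$ and the graphs of the $g_k$, i.e.\ is obtained from them and the diagonals by the admissible constructs (permutation and identification of coordinates, intersection, product, converse, and composition with a $g_k$). This is a finite computation for each fixed $n$, and the point is to package it uniformly; it is made tractable by the observation that $\lejk$ is exactly the smallest subuniverse of $\M_j\times\M_k$ (the set of ``constant pairs''), and that the relations involving the sort $\M_0$ reduce, via the $g_k$, to the well-understood binary structure on the four-element bilattice $\M_0\cong\JB_0$.

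For \emph{optimality}, one shows that dropping any single $g_k$ or any single $\lek$ or $\lejk$ destroys the duality. By the standard test-algebra method, for each such $\rho$ one forms the reduced alter ego $\MT_n'=\MT_n\setminus\{\rho\}$ and exhibits a small algebra $\A\in\CV_n$ --- typically a subalgebra of a low power of one of the $\M_k$ --- together with an element of $\mathrm{E}'\mathrm{D}'(\A)$ that is not given by evaluation at an element of $\A$, so that the unit $e_\A\colon\A\to\mathrm{E}'\mathrm{D}'(\A)$ fails to be surjective. Equivalently, one verifies directly that $\rho$ is not entailed by the remaining operations and relations.

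The main obstacle is the entailment analysis of the last two paragraphs: each individual instance is a routine finite check, but assembling a description of all binary compatible relations --- and of which deletions break the duality --- that is valid simultaneously for every $n$ is where the effort lies. (An alternative, which also explains the particular shapes of $\lek$ and $\lejk$, is to derive the duality by piggybacking over the bounded distributive lattice reducts of the $\M_k$, as in Section~\ref{sec:piggy}; the entailment and optimality steps are then still required.)
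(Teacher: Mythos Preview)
This theorem is not proved in the present paper: it is quoted verbatim from the companion paper~\cite{CDH19} (their Theorem~4.1), and no argument is supplied here. So there is no ``paper's own proof'' against which to compare your sketch.

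That said, your outline is the standard route and, judging from what the present paper imports from~\cite{CDH19}, it is close to what the companion paper actually does. In particular, Theorem~\ref{thm:MImulti} here is lifted from~\cite[Lemma~5.4, Theorem~6.1]{CDH19} and gives exactly the meet-irreducible subuniverses of each $\M_j\times\M_k$; this is precisely the uniform description of $\Sub(\M_j\times\M_k)$ one needs to carry out your entailment step for all~$n$ at once. Your observation that $K_{jk}={\lejk}$ is the bottom of $\Sub(\M_j\times\M_k)$ (for $0<j<k$) matches the explicit list just before Figures~\ref{fig:subalg-I&II}--\ref{fig:subalg-III&IV}. One small correction: your claim that each $\M_k$ has no proper subalgebra because ``all six elements are named by constants'' is not quite right---only four of the six elements of $\M_k$ (namely $\top^k,\bot^k,\mbf^k,\mbt^k$) are values of the constants $\top,\bot,\mbf_i,\mbt_i$; the elements $\zero^k$ and $\one^k$ are not constants but are reached from them, e.g.\ $\zero^k=\mbf_{k-1}^{\M_k}$ and $\one^k=\mbt_{k-1}^{\M_k}$ by Definition~\ref{def:Mk}. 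The conclusion (no proper subalgebras, hence $\mathcal G^{\mathrm f}$ reduces to $\mathcal G_{(n)}$ plus identities) is correct, but the reason needs that adjustment.
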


%%%%%%%%%%%%%%%%%%%%%%%%%%%%%%%%%%%%%%%%%%%%%%%%%%%%%%%%%%%%%%%%%%%%
\section{A description of the objects in $\CX_n$}\label{sec:dualcategory}
%%%%%%%%%%%%%%%%%%%%%%%%%%%%%%%%%%%%%%%%%%%%%%%%%%%%%%%%%%%%%%%%%%%%

Our aim in this section is to give an intrinsic description of the objects in~$\CX_n$.
We require the  multi-sorted version of the Separation Lemma~\cite[Theorem 1.4.4]{CD98}.  We shall state it only in the case of topological structures in the signature of~$\MT_n$.

\begin{lem}\label{lem:sep}
Let $n\in \omega\comp\{0\}$. For a multi-sorted compact topological structure
\[
\X = \langle X  ; \mathcal G_{(n)}, \mathcal {S}_{(n)}, \Tp\rangle, \text{ where } X = X_0 \du \cdots \du X_n,
\]
in the signature of $\MT_n$, we have $\X\in \IScP(\MT_n)$ if and only if
\begin{enumerate}[label=\textup{(\arabic*)},leftmargin=1.25\parindent,itemsep=0pt] 

\item there is a morphism $\varphi\colon \X \to \MT_n$,

\item the morphisms from $\X$ to $\MT_n$ separate the elements of each sort, that is, for all $k\in [0,n]$ and all $x, y \in X_k$ with $x\ne y$, there exists a morphism $\varphi_e \colon \X \to \MT_n$ satisfying $\varphi_e(x) \ne \varphi_e(y)$,

\item the morphisms from $\X$ to $\MT_n$ separate each relation in $\mathcal {S}_{(n)}$, that is,

\begin{enumerate}[label=\textup{(\alph*)},leftmargin=\parindent,itemsep=0pt] 

\item for all $k\in [0,n]$ and all $x\nleqslant^k y$ in $X_k$,
there exists a morphism $\varphi_k \colon \X \to \MT_n$ satisfying $\varphi_k(x) \nlek  \varphi_k(y)$,

\item for all $j, k\in [1, n]$ with $j < k$,
and all $x\in X_j$ and $y\in X_k$ with $x \nlejk y$, there exists a morphism $\varphi_{jk} \colon \X \to \MT_n$ satisfying $\varphi_{jk}(x) \nlejk  \varphi_{jk}(y)$.

\end{enumerate}

\end{enumerate}
\end{lem}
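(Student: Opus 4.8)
The plan is to obtain Lemma~\ref{lem:sep} as the specialisation to the signature of~$\MT_n$ of the multi-sorted analogue of the Separation Lemma~\cite[Theorem~1.4.4]{CD98}. Since every operation in $\mathcal G_{(n)}$ is total and each sort of $\MT_n$ carries a Boolean (indeed finite discrete) topology, no partial-operation refinements are needed and the argument follows the familiar two-part pattern. For necessity, I would start from the fact that, up to isomorphism, $\X$ is a topologically closed substructure of some power $\MT_n^S$ with $S\neq\emptyset$. The restriction to $\X$ of any projection $\pi_s\colon\MT_n^S\to\MT_n$ is a morphism, giving~(1). If $x\neq y$ lie in a common sort $X_k$, then $x(s)\neq y(s)$ for some $s\in S$, so $\pi_s$ witnesses~(2); and since the relations on a closed substructure of $\MT_n^S$ are the restrictions of those on $\MT_n^S$, a pair failing the relevant relation ($x\nlek y$ with $x,y\in X_k$, or $x\nlejk y$ with $x\in X_j$, $y\in X_k$ and $j<k$) has $(x(s),y(s))$ outside that relation on $\MT_n$ for some~$s$, so $\pi_s$ again does the job, establishing~(3).

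For sufficiency I would assume (1)--(3) and form $S=\CX_n(\X,\MT_n)$, which is non-empty by~(1), together with the evaluation map $e\colon\X\to\MT_n^S$ defined by $e(x)=(\varphi(x))_{\varphi\in S}$. Each $\varphi\in S$ preserves sorts, operations and relations, so $e$ is a continuous sort-preserving morphism; by~(2) it is injective on each sort, hence injective; and since $\X$ is compact and $\MT_n^S$ is Boolean (hence Hausdorff), $e$ is a topological embedding with closed image $e(\X)$. The remaining point is that $e$ \emph{reflects} the relations of $\mathcal S_{(n)}$: if $x\in X_j$, $y\in X_k$ and $(e(x),e(y))$ lies in the relevant relation of $\MT_n^S$, then $(\varphi(x),\varphi(y))$ lies in that relation of $\MT_n$ for every $\varphi\in S$, so the contrapositives of~(3)(a) and~(3)(b) force $(x,y)$ into the corresponding relation of~$\X$. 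Hence $e$ is an embedding of topological structures and $\X\cong e(\X)\in\IScP(\MT_n)$.

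The step I expect to need genuine care — and the place where the multi-sorted setting differs from the single-sorted template — is the bookkeeping that matches conditions~(2) and~(3) with the point- and relation-separation hypotheses of the general lemma read over the (finite, disjoint-union) signature of $\MT_n$: one must note that morphisms need only separate distinct elements \emph{within each sort} and separate each relation $R\in\mathcal S_{(n)}$ \emph{on its own pair of sorts}, and that separation of the graphs of the total operations $g_k\colon\M_k\to\M_0$ is automatic once points are separated, so these operations contribute nothing to the list of conditions. Beyond this routine verification I do not anticipate any substantive obstacle.
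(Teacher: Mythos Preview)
The paper does not prove this lemma at all; it simply records it as the specialisation to the signature of~$\MT_n$ of the multi-sorted Separation Lemma~\cite[Theorem~1.4.4]{CD98}. Your proposal correctly supplies the standard proof of that general result in this specific setting---projections for necessity, the evaluation map into $\MT_n^S$ with $S=\CX_n(\X,\MT_n)$ for sufficiency---and your bookkeeping observations (separation is needed only within sorts, and the graphs of the total operations $g_k$ need no separate treatment once points are separated) are exactly right.
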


\goodbreak

We now give our axiomatisation of the category~$\CX_n$.
Axiom~\ref{A1} requires the operations to be continuous; Axioms~\ref{A2}--\ref{A5} are first order, indeed, they are quasi-equational; Axioms~\ref{A6} and~\ref{A7} are topological separation axioms. We do not require the relations to be topologically closed in the appropriate product spaces as this follows from the axioms; indeed, if $\X$ satisfies the axioms, then the theorem implies that $\X$ is isomorphic both structurally and topologically to a closed substructure of a power of~$\MT_n$, whence the relations on $\X$ must be topologically closed.

Let $n \in \omega\comp\{0\}$ and let $\X = \langle X  ; \mathcal G_{(n)}, \mathcal {S}_{(n)}, \Tp\rangle$, where $X = X_0 \du \cdots \du X_n$,
be a multi-sorted topological structure in the signature of $\MT_n$. As it will allow some of our arguments to be more compact, we define ${\lekk} := {\lek}$, for each $k \in [1, n]$.

Let $j, k\in [1, n]$ with $j \le k$ and for each $\ell \in [j, k]$ let $U_\ell$ be a subset of $X_\ell$. We say that $U_j, \dots, U_k$ are \emph{mutually increasing up-sets} if, for all $i, \ell\in [j, k]$ with $i \le \ell$, whenever $x\in U_i$ and $y\in X_\ell$ with $x \leil y$, we have $y\in U_\ell$.
If $\lel$ is an order, then by taking $i = \ell$, this definition guarantees that $U_\ell$ is indeed an up-set in $\langle X_\ell; \lel\rangle$.

Recall that an ordered topological space $\langle X; \le,
\Tp\rangle$ is a \emph{Priestley space} if it is compact and
totally order-disconnected,  that is, for all $x,y \in X$
with $x\nle y$, there exists a clopen up-set $U$ with $x\in U$ and
$y\notin U$.
Note that the underlying topological space of a
Priestley space is Boolean.

\begin{thm}\label{thm:dualcategory}
Let $n\in \omega\comp\{0\}$. For a multi-sorted topological structure
\[
\X = \langle X  ; \mathcal G_{(n)}, \mathcal {S}_{(n)}, \Tp\rangle, \text{ where } X = X_0 \du \cdots \du X_n,
\]
in the signature of $\MT_n$, we have $\X \in \IScP(\MT_n)$ if and only if

\begin{enumerate}[label=\textup{(A\arabic*)},leftmargin=1.7\parindent,itemsep=0pt] 

\item \label{A1} $g_k\colon X_k \to X_0$ is continuous, for all $k\in [1,n]$;

\item \label{A2} for all $k\in [1,n]$ and all $x,y\in X_k$, if $x \lek y$, then $g_k(x)= g_k(y)$;

\item \label{A3} for all $j, k\in [1,n]$ with $j < k$ and all $x\in X_j$ and $y\in X_k$,
if $x \lejk y$, then $g_j(x)= g_k(y)$;

\item \label{A4} for all $j, k\in [1,n]$ with $j < k$ and  all $x,y\in X_j$ and $u,v\in X_k$,
if $x \lej y$ and $y \lejk u$ and $u \lek v$, then $x \lejk v$;

\item \label{A5} for all $j, k, \ell\in [1, n]$ with $j < k < \ell$ and  all $x\in X_j$, $y\in X_k$ and $z\in X_\ell$,
if $x \lejk y$ and $y \lekl z$, then $x \lejl z$;

\item \label{A6} $\langle X_k; \lek,{\Tp}_k \rangle$ is a Priestley space for all $k\in [0, n]$, where ${\Tp}_k$ is the topology induced by $\Tp$ \textup(in particular, $\langle X_k; \lek\rangle$ is an ordered set\textup);

\item \label{A7} for all $j, k\in [1,n]$ with $j < k$ and  all $x\in X_j$ and $y\in X_k$ with ${x \nlejk y}$, there exist $U_j, U_{j+1}, \dots, U_k$, with $U_\ell$ a clopen up-set in $\langle X_\ell; \lel,{\Tp}_\ell \rangle$, for all $\ell\in[j, k]$, such that $U_j, \dots, U_k$ are mutually increasing with $x\in U_j$ and $y\in X_k\comp U_k$.

\end{enumerate}
Note that when $n = 1$, only conditions \ref{A1}, \ref{A2} and \textup{\ref{A6}} apply.
\end{thm}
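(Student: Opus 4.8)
The proof has two halves: the necessity half is essentially a routine ``check it on $\MT_n$ and transfer'' argument, while the sufficiency half carries the weight and proceeds through the Separation Lemma.

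\emph{Necessity.} I would first verify that $\MT_n$ itself satisfies \ref{A1}--\ref{A7}. Axiom~\ref{A1} is immediate, since the sorts of $\MT_n$ are discrete; \ref{A2}--\ref{A5} are quasi-equations checked by inspecting the tables~\eqref{lezetc} and the maps $g_k$ (for \ref{A4} and \ref{A5} this says only that the stretched relations $\lejk$ compose correctly with the orders $\lek$ and with one another); \ref{A6} holds because a finite poset with the discrete topology is a Priestley space; and \ref{A7} is a finite case check---given $x\nlejk y$ in $M_j\times M_k$, take on each intermediate sort $\ell$ the principal up-set $\up m_\ell$ of the element $m_\ell$ of $M_\ell$ of the same kind as~$x$ (for instance $\up\zero^\ell$, or $\up\top^\ell$, as the configuration demands), and confirm these are mutually increasing and separate $x$ from~$y$. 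Next, each of \ref{A1}--\ref{A7} passes to objects of $\IScP(\MT_n)$: for \ref{A1}--\ref{A5} because each is quasi-equational (and \ref{A1} automatic), hence preserved by products, closed substructures and isomorphic copies; for \ref{A6} because Priestley spaces are closed under products and closed subspaces, so every sort of an object of $\IScP(\MT_n)$ is again a Priestley space; and for \ref{A7} because, if $\X$ is a closed substructure of $\MT_n^S$ and $x\nlejk y$ in $\X$, one picks a coordinate $s$ with $\pi_s(x)\nlejk\pi_s(y)$ in $\MT_n$, pulls back along the relation-preserving projection~$\pi_s$ the up-sets supplied by \ref{A7} for $\MT_n$, and intersects them with the sorts of~$\X$; these remain clopen, mutually increasing, and separating.

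\emph{Sufficiency.} Now suppose $\X$ satisfies \ref{A1}--\ref{A7}. By the multi-sorted Separation Lemma~\ref{lem:sep} it suffices to produce: one morphism $\X\to\MT_n$; morphisms separating the points of each sort; and morphisms detecting the failure of each relation $\lek$ and $\lejk$. The pivotal step is an explicit description of all morphisms $\varphi\colon\X\to\MT_n$. On sort $0$ the only structure is $\lez$, and $\langle M_0;\lez\rangle$ is order-isomorphic to $\two\times\two$, so the component $\varphi_0$ is just a pair of clopen up-sets of the Priestley space $\langle X_0;\lez\rangle$ (a Priestley space by \ref{A6}). Given $\varphi_0$, Axiom~\ref{A1} partitions each sort $X_k$ ($k\ge1$) into four clopen pieces according to the value of $\varphi_0\circ g_k$; over $\top^0$ and $\bot^0$ the value of $\varphi_k$ is forced, while over $\mbf^0$ (resp.\ $\mbt^0$) it may be either of $\mbf^k<\zero^k$ (resp.\ $\mbt^k<\one^k$). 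Using \ref{A2} and \ref{A3} one checks that such a choice of values is a morphism exactly when the sets $C_k:=\varphi_k^{-1}(\zero^k)$ form a mutually increasing family of clopen up-sets $(C_1,\dots,C_n)$, and likewise for the sets $\varphi_k^{-1}(\one^k)$---the crux being that inside a matching fibre the only pair absent from $\lejk$ is $(\zero^j,\mbf^k)$ (resp.\ $(\one^j,\mbt^k)$), precisely the configuration that mutual increase forbids; Axioms~\ref{A4} and \ref{A5} are the quasi-equational trace on $\X$ of this coherence and are what make the description exact.

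With morphisms thus catalogued, the demands of Lemma~\ref{lem:sep} are met one at a time. A morphism exists: send every $X_k$ to $\top^k$. Separating the points of a sort reduces to separating its order, since each $\lek$ is antisymmetric by \ref{A6}. To separate $\lez$ on $X_0$: \ref{A6} yields a clopen $\lez$-up-set containing $x$ but not $y$; read it off as a morphism $\varphi_0$ with $\varphi_0(x)\nlez\varphi_0(y)$ and extend to all of $\X$ by choosing every higher-sort value trivially---by \ref{A2} and \ref{A3} this is a morphism. To separate $\lek$ on $X_k$ with $k\ge1$: \ref{A6} yields a clopen $\lek$-up-set $U$ with $x\in U\not\ni y$; take $\varphi_0$ constantly $\mbf^0$ (so every higher sort lies wholly over $\mbf^0$, and the only higher-sort values in play are the $\mbf^\ell,\zero^\ell$), set $C_k:=U$, $C_\ell:=\emptyset$ for $\ell<k$ and $C_\ell:=X_\ell$ for $\ell>k$; every instance of mutual increase is then vacuous or immediate, so this is a morphism $\varphi$ with $\varphi_k(x)=\zero^k\nlek\mbf^k=\varphi_k(y)$. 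To separate $\lejk$: Axiom~\ref{A7} hands us mutually increasing clopen up-sets $U_j,\dots,U_k$ with $x\in U_j$ and $y\notin U_k$; keep $\varphi_0$ constantly $\mbf^0$, set $C_\ell:=U_\ell$ for $\ell\in[j,k]$, $C_\ell:=\emptyset$ for $\ell<j$ and $C_\ell:=X_\ell$ for $\ell>k$, and verify that the padded family $(C_1,\dots,C_n)$ stays mutually increasing; the resulting $\varphi$ satisfies $\varphi_j(x)=\zero^j\nlejk\mbf^k=\varphi_k(y)$.

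\emph{The main obstacle.} All the substance is in the sufficiency half, and within it in the concrete description of morphisms $\X\to\MT_n$: one must align, fibre by fibre, the combinatorics of $\lez$, of the orders $\lek$, and above all of the stretched relations $\lejk$ on $\MT_n$ with up-set conditions on the Priestley spaces $X_k$, and then confirm that the (padded) up-set families coming out of \ref{A6} and \ref{A7} satisfy \emph{every} instance of the mutual-increase condition. Getting this bookkeeping exactly right---and seeing precisely which of \ref{A2}--\ref{A5} underwrites each part of it---is the delicate point; once it is in place, the Separation Lemma closes the argument.
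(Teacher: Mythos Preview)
Your proposal is correct and follows essentially the same route as the paper: both directions are handled as you describe, with the sufficiency half resting on the Separation Lemma and explicit separating morphisms built from clopen up-sets furnished by \ref{A6} and \ref{A7}. The only differences are cosmetic---the paper uses the constant substructure $\{\mbt^0,\dots,\mbt^n\}$ rather than $\{\top^0,\dots,\top^n\}$, works with the $\{\mbt^\ell,\one^\ell\}$ pair rather than your $\{\mbf^\ell,\zero^\ell\}$ pair, and folds the $\lek$ case into $\lejk$ via a reformulated axiom (A7)$'$ allowing $j=k$, whereas you treat it separately---and the paper constructs its separating morphisms directly without first writing down your general catalogue of all morphisms $\X\to\MT_n$.
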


Before giving the proof of Theorem~\ref{thm:dualcategory}, we will refine conditions~\ref{A6} and~\ref{A7} into four conditions. This will enable us to remove some repetition from the proof.

\begin{lem}
Given that~\ref{A4} and \ref{A5} hold, \ref{A6} and \ref{A7} are together equivalent to the following four conditions
\begin{enumerate}[label=\textup{(A\arabic*)},leftmargin=2.0\parindent,itemsep=0pt] 

\item[\upshape(A6)$^0$] $\langle X_0; \lez, {\Tp}_0 \rangle$ is a Priestley space, where ${\Tp}_0$ is the topology induced by~$\Tp$,

\item[\upshape(A6)$'$] $\lek$ is an order on $X_k$, for all $k\in [1, n]$,

\item[\upshape(A6)$''$] $\langle X_k; {\Tp}_k \rangle$ is a compact space for all $k\in [1, n]$, where ${\Tp}_k$ is the topology induced by~$\Tp$,

\item[\upshape(A7)$'$] for all $j, k\in [1,n]$ with $j \le k$ and  all $x\in X_j$ and $y\in X_k$ with ${x \nlejk y}$, there exist $U_j, U_{j+1}, \dots, U_k$, with $U_\ell$ a clopen up-set in $\langle X_\ell; \lel,{\Tp}_\ell \rangle$, for all $\ell\in[j, k]$, such that $U_j, \dots, U_k$ are mutually increasing with $x\in U_j$ and $y\in X_k\comp U_k$.

\end{enumerate}
\end{lem}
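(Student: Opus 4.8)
The plan is to show the two bundles of conditions are equivalent by a direct, ``unpacking and repackaging'' argument, using \ref{A4} and \ref{A5} only to collapse the mutually-increasing families into a more convenient form. First I would prove the easy direction: assume \ref{A6} and \ref{A7}. Then (A6)$^0$ is literally the $k=0$ instance of \ref{A6}; (A6)$'$ and (A6)$''$ follow from the $k\in[1,n]$ instances of \ref{A6}, since a Priestley space is by definition an ordered compact (indeed Boolean) space. For (A7)$'$ the only new content beyond \ref{A7} is the diagonal case $j=k$, which asks: given $x\nlek y$ in $X_k$, find a clopen up-set $U_k$ of $\langle X_k;\lek,{\Tp}_k\rangle$ with $x\in U_k$, $y\notin U_k$ (a single-term family is automatically ``mutually increasing'' because $\lekk={\lek}$ is an order, so the $i=\ell=k$ condition just says $U_k$ is a $\lek$-up-set). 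But that is exactly total order-disconnectedness of $\langle X_k;\lek\rangle$, which holds by \ref{A6}. So the forward direction costs almost nothing.

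For the reverse direction, assume (A6)$^0$, (A6)$'$, (A6)$''$ and (A7)$'$ (together with \ref{A4}, \ref{A5}). To recover \ref{A6} for $k\in[1,n]$ I must show $\langle X_k;\lek,{\Tp}_k\rangle$ is totally order-disconnected; compactness is (A6)$''$ and the order axioms are (A6)$'$. Given $x\nlek y$ in $X_k$, apply (A7)$'$ with $j=k$: it yields a single clopen $\lek$-up-set $U_k$ separating $x$ from $y$ in the required direction. Hence \ref{A6} holds. Finally \ref{A7} is just the $j<k$ special case of (A7)$'$, so it holds trivially. This shows (A6)$^0$ $\wedge$ (A6)$'$ $\wedge$ (A6)$''$ $\wedge$ (A7)$'$ implies \ref{A6} $\wedge$ \ref{A7}, completing the equivalence.

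Strictly speaking, the roles of \ref{A4} and \ref{A5} in the statement are a red herring for \emph{this} lemma: the equivalence above goes through without them. The reason they are hypothesised is presumably that in the intended use one wants (A7)$'$ with $j\le k$ to be genuinely stronger-looking but equivalent, and $\ref{A4}$/$\ref{A5}$ guarantee that a mutually increasing family $U_j,\dots,U_k$ behaves coherently under the stretched orders --- e.g.\ that the ``obvious'' choice $U_\ell:=\{\,z\in X_\ell\mid (\exists w\in U_i)\ w\leil z\,\}$ is forced to be an up-set and is consistent across $i\le\ell$, which is needed if one later wants to build such families rather than assume them. I would add a sentence to that effect but keep the formal proof as the two-paragraph argument above, since the extra axioms are not logically needed to derive the stated biconditional.

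\begin{proof}
Assume first that \ref{A6} and \ref{A7} hold. Then (A6)$^0$ is the case $k=0$ of \ref{A6}. For $k\in[1,n]$, \ref{A6} says $\langle X_k;\lek,{\Tp}_k\rangle$ is a Priestley space; in particular $\lek$ is an order (giving (A6)$'$) and the space is compact (giving (A6)$''$). For (A7)$'$, the case $j<k$ is exactly \ref{A7}; in the case $j=k$ we have $x\nlek y$ in $X_k$, and since $\langle X_k;\lek,{\Tp}_k\rangle$ is totally order-disconnected there is a clopen up-set $U_k$ with $x\in U_k$ and $y\notin U_k$. As $\lekk={\lek}$ is an order, the one-term family $U_k$ is (vacuously) mutually increasing, so (A7)$'$ holds.

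Conversely, assume (A6)$^0$, (A6)$'$, (A6)$''$ and (A7)$'$. Then $\langle X_0;\lez,{\Tp}_0\rangle$ is a Priestley space by (A6)$^0$, which is the case $k=0$ of \ref{A6}. Let $k\in[1,n]$. By (A6)$'$, $\lek$ is an order on $X_k$, and by (A6)$''$ the space $\langle X_k;{\Tp}_k\rangle$ is compact. To see that it is totally order-disconnected, let $x\nlek y$ in $X_k$ and apply (A7)$'$ with $j=k$: we obtain a clopen up-set $U_k$ in $\langle X_k;\lek,{\Tp}_k\rangle$ with $x\in U_k$ and $y\in X_k\comp U_k$. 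Hence $\langle X_k;\lek,{\Tp}_k\rangle$ is a Priestley space, and \ref{A6} holds for all $k\in[0,n]$. Finally \ref{A7} is the special case $j<k$ of (A7)$'$, so it holds. (Axioms \ref{A4} and \ref{A5} are not needed for this equivalence; they are listed because they will be used when such mutually increasing families are constructed rather than merely assumed.)
\end{proof}
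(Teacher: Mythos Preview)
Your proof is correct and follows the same approach as the paper: the paper's proof is a two-sentence remark whose only substantive observation is that, in the presence of (A6)$'$ and (A6)$''$, the $j=k$ instance of (A7)$'$ is precisely total order-disconnectedness of $\langle X_k;\lek,{\Tp}_k\rangle$, which is exactly the hinge you identified. Your additional remark that \ref{A4} and \ref{A5} are not actually invoked in the equivalence is accurate and worth keeping; the paper does not comment on this.
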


\begin{proof}
This is straightforward. The main thing to note is that, in the presence of \ref{A6}$'$ and \ref{A6}$''$, when $j = k$ condition~\ref{A7}$'$ says precisely that $\langle X_k; \lek, {\Tp}_k \rangle$ is totally order-disconnected, and therefore says that $\langle X_k; \lek, {\Tp}_k \rangle$ is a Priestley space.
\end{proof}

We now turn to the proof of the theorem.

\begin{proof}[Proof of Theorem~\ref{thm:dualcategory}]
Let $\X \in \IScP(\MT_n)$. It is easy to see that $\MT_n$ satisfies \ref{A1}--\ref{A6}. Since these conditions
are preserved under multi-sorted products and closed substructures it follows at once that $\X$ satisfies them.

We now show that $\X$ satisfies the topological condition \ref{A7}. Since \ref{A7} is inherited by closed substructures, it suffices to assume that $\X$ is a multi-sorted power~$\MT_n^S$, for some set~$S$. Assume that $j < k$ in $[1, n]$ and  let $x\in M_j^S$ and $y\in M_k^S$ with ${x \nlejk y}$. Then there exists $s\in S$ such that $x(s) \nlejk y(s)$. Define
\[
U_j := \{\, z \in M_j^S \mid x(s) \lej z(s)\,\},
\]
and for $\ell\in [j+1, k]$ define
\[
U_\ell := \{\, z \in M_i^S \mid x(s) \lejl z(s)\,\}.
\]
Then $U_\ell$ is a clopen up-set in $\langle X_\ell; \lel, \Tp_\ell\rangle$, for all $\ell\in [j, k]$. Since $x\in U_j$ and $y\in X_k\comp U_k$, it remains to see that $U_j, U_{j+1}, \dots, U_k$ are mutually increasing. Let $i < \ell$ in $[j, k]$ and assume that $w\in U_i$ and $z\in M_\ell^S$ with $w \leil z$. Hence $x(s) \leji w(s)$, as $w\in U_i$, and $w(s) \leil z(s)$, as $w \leil z$. Then \ref{A5} yields $x(s) \lejl z(s)$, whence $z\in U_\ell$. Thus $U_j, U_{j+1}, \dots, U_k$ are mutually increasing.

For the converse, assume that $\X$ satisfies \ref{A1}--\ref{A7}. By Lemma~\ref{lem:sep}, we must show that there is a morphism $\varphi\colon \X \to \MT_n$, and that the morphisms from $\X$ to $\MT_n$ separate the elements of each sort and separate the relations in $\mathcal S_{(n)}$.

Since the set $T := \{\mbt^0, \mbt^1, \dots, \mbt^n\}$ forms a substructure of $\MT_n$ on which every relation is total, we may define a morphism $\varphi\colon \X \to \MT_n$ by mapping $X_k$ constantly to $\mbt^k$, for all $k\in [0, n]$. 
For all $k\in [0, n]$,  if $x\ne y$ in~$X_k$, then since $\lek$ is an order, either $x \nlek y$ or $y \nlek x$; so being able to separate the relation $\lek$ implies that we can separate the points of~$X_k$.

We next separate the order on $X_0$, and then 
the relation $\lejk$ (from $X_j$ to $X_k$)
for $j, k\in [1, n]$ with $j \le k$.
Let $x,y\in X_0$ with $x\nlez y$. We must define a morphism $\varphi_0\colon \X \to \MT_n$ satisfying $\varphi_0 (x) \nlez  \varphi_0 (y)$.
By \ref{A6}$^0$ there is a clopen up-set $U \subseteq X_0$ containing $x$ but not~$y$. We shall see that we obtain the required  morphism by defining
\[
\varphi_0 (z) := \begin{cases} \top^0 & \quad\text{if $z\in U$},\\
\bot^0 & \quad\text{if $z\in X_0\comp U$},\\
\top^k & \quad\text{if $z\in g_k^{-1}(U)$, for some $k\in [1, n]$},\\
\bot^k & \quad\text{if $z\in g_k^{-1}(X_0\comp U)$, for some $k\in [1, n]$}.
\end{cases}
\]
We first prove that $\varphi_0$ preserves $g_k$, for all $k\in [1, n]$. Let $k\in [1, n]$ and let $z\in X_k$.  By the definitions of $g_k$ and $\varphi_0$, if $g_k(z) \in U$
then $\varphi_0(z) = \top^k$ and so $g_k(\varphi_0(z)) = \top^0 = \varphi_0(g_k(z))$ and if $g_k(z) \notin U$
then $\varphi_0(z) = \bot^k$ and so $g_k(\varphi_0(z)) = \bot^0 = \varphi_0(g_k(z))$. Thus $\varphi_0$ preserves~$g_k$. 
By \ref{A2} and \ref{A3} respectively, $\varphi_0$ is $\lek$-preserving and $\lejk$-preserving.
Finally, $\varphi_0$ is continuous since $U$ is clopen and, by \ref{A1}, the operation  $g_k\colon X_k \to X_0$ is continuous, for all $k\in [1, n]$.
Hence $\varphi_0$ is a morphism from $\X$ to $\MT_n$ such that $\varphi_0 (x) = \top^0 \nlez  \bot^0 = \varphi_0 (y)$.

We now consider the relation $\lejk$ from $X_j$ to $X_k$
for $j, k\in [1, n]$ with $j \le k$. Let $x\in X_j$, $y\in X_k$ with $x \nlejk y$. We must define a morphism $\varphi_{jk}\colon \X \to \MT_n$ satisfying $\varphi_{jk} (x) \nlejk  \varphi_{jk} (y)$.
By~\ref{A7}$'$ there exist mutually increasing clopen up-sets $U_j, \dots, U_k$  such that $x\in U_j$ and $y\in X_k\comp U_k$. We define the mapping $\varphi_{jk}$ by
\[
\varphi_{jk} (z) := \begin{cases} \mbt^\ell & \quad\text{if $z\in X_\ell$ for some $\ell \in [0, j-1]$},\\
\one^{\ell} & \quad\text{if $z\in U_\ell$ for some $\ell\in[j, k]$},\\
\mbt^{\ell} & \quad\text{if $z\in X_{\ell}\comp U_\ell$ for some $\ell\in[j, k]$},\\
\one^\ell & \quad\text{if $z\in X_\ell$  for some $\ell \in [k+1, n]$}.\\
\end{cases}
\]
We first check that $\varphi_{jk}$ preserves $g_\ell$ for all $\ell\in [1, n]$. Let $\ell\in [1, n]$ and let $z\in X_\ell$.  Then $\varphi_{jk} (z) \in \{\one^\ell, \mbt^\ell\}$ and so $g_\ell(\varphi_{jk}(z)) = \mbt^0$. As $g_\ell(z)\in X_0$ we also have $\varphi_{jk}(g_\ell(z)) = \mbt^0$. Hence $\varphi_{jk}$ preserves~$g_\ell$.

Since $\varphi_{jk}$ is constant on $X_0$, it trivially preserves~$\lez$, and it remains to prove that $\varphi_{jk}$ preserves $\leil$ for all $i\le \ell$ in $[1, n]$. Let $i\le \ell$ in $[1, n]$ and let $w\in X_i$ and $z\in X_\ell$ with $w \leil z$. Since $\{(\mbt^i,\mbt^\ell),(\mbt^i,\one^\ell),(\one^i,\one^\ell)\} \subseteq {\leil}$, to prove that $\varphi_{jk}(w) \leil\varphi_{jk}(z)$ it suffices to show that $(\varphi_{jk}(w),\varphi_{jk}(z)) \ne (\one^i, \mbt^\ell)$. Suppose, by way of contradiction, that $(\varphi_{jk}(w),\varphi_{jk}(z)) = (\one^i, \mbt^\ell)$. Then $\varphi_{jk}(w) = \one^i$  so $j \le i$, and
$\varphi_{jk}(z) = \mbt^\ell$  so $\ell \le k$. Hence $i\le \ell$ in $[j, k]$.
Thus $\varphi_{jk}(w) = \one^i$ implies that $w\in U_i$. As $w \leil z$ and $U_j, \dots, U_k$ are mutually increasing, we conclude that $z\in U_\ell$, whence $\varphi_{jk}(z) = \one^\ell$, contradicting our assumption that $\varphi_{jk}(z) = \mbt^\ell$. Hence $\varphi_{jk}$ preserves~$\leil$.

Since $U_\ell$ is clopen in $X_\ell$, for all $\ell \in [j, k]$, and since $\varphi_{jk}$ is constant on all $X_\ell$ with $\ell\notin [j, k]$, it is trivial that $\varphi_{jk}$ is continuous. Therefore $\varphi_{jk}$ is a morphism from $\X$ to $\MT_n$ with $\varphi_{jk} (x) = \one^{j} \nlejk  \mbt^{k} = \varphi_{jk} (y)$, as required.
\end{proof}

%%%%%%%%%%%%%%%%%%%%%%%%%%%%%%%%%%%%%%%%%%%%%%%%%%%%%%%%%%%%%%%%%%%%
\section{An isomorphic category of single-sorted structures}\label{sec:isocat}
%%%%%%%%%%%%%%%%%%%%%%%%%%%%%%%%%%%%%%%%%%%%%%%%%%%%%%%%%%%%%%%%%%%%
Throughout this section, we fix $n \in \omega\comp\{0\}$. We will define a category $\CY_n$ that is isomorphic to $\CX_n$ but consists of topological structures that are single sorted. The objects of $\CY_n$ will be Priestley spaces with a continuous retraction in which the order has a natural ranking.

We first give a lemma that will do much of the work for us.

\begin{lem}\label{lem:lePYonX}
Let $\X = \langle X  ; \mathcal G_{(n)}, \mathcal {S}_{(n)}, \Tp\rangle$, where $X = X_0 \du \cdots \du X_n$, be a multi-sorted topological structure in the signature of $\MT_n$. Define a binary relation $\lePY$ on $X$ by
\[
x \lePY y \iff x\in X_j, y\in X_k \text{ and } x \lejk y, \text{ for some $j, k \in [1, n]$ with } j \le k.
\]

\begin{enumerate}[label=\textup{(\alph*)},itemsep=0pt] 

\item $\X$ satisfies  \ref{A4}, \ref{A5} and \ref{A6}$'$ if and only if $\lePY$ is an order relation on~$X$.

\item Assume that $\X$ satisfies \ref{A4}, \ref{A5} and \ref{A6}$'$. If $U$ is a clopen up-set in ${\langle X; \lePY, \Tp\rangle}$, then $U\cap X_0,
\dots, U\cap X_n$ are mutually increasing clopen up-sets. Conversely, let
$j < k$ in $[0, n]$,
let $U_i\subseteq X_i$, for all $i\in [j, k]$, and assume that $U_j, U_{j+1}, \dots, U_k$ are mutually increasing clopen up-sets. Then there exists a clopen up-set $U$ in ${\langle X; \lePY, \Tp\rangle}$ with $U_i = U \cap X_i$, for all $i\in [j, k]$.

\item $\X$ satisfies Axioms \ref{A4}--\ref{A7} if and only if
$
\langle X; \lePY, \Tp\rangle
$
is a Priestley space.

\end{enumerate}

\end{lem}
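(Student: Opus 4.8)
The plan is to prove the three parts of Lemma~\ref{lem:lePYonX} in order, using part~(a) as the backbone for (b) and (c).

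\textbf{Part (a).} I would first observe that the relation $\lePY$ restricted to each sort $X_k$ is exactly $\lekk = \lek$ (taking $j=k$), so if $\lePY$ is an order then in particular each $\lek$ is an order, giving \ref{A6}$'$. For the forward direction, reflexivity of $\lePY$ on each $X_k$ is \ref{A6}$'$; antisymmetry: if $x\lePY y$ and $y\lePY x$ with $x\in X_j$, $y\in X_k$, then by the definition we need $j\le k$ and $k\le j$, so $j=k$ and antisymmetry reduces to antisymmetry of $\lek$. Transitivity is where \ref{A4} and \ref{A5} come in: given $x\lePY y\lePY z$ with $x\in X_i$, $y\in X_j$, $z\in X_k$ and $i\le j\le k$, I split into cases according to which of $i,j,k$ coincide. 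If all three coincide this is transitivity of $\lek$; if $i=j<k$ this is the hypothesis $x\lej y$, $y\lejk z \Rightarrow x\lejk z$, which is \ref{A4} with the trivial instances $y\lejk z$ (given) and $z\lek z$; if $i<j=k$ it is \ref{A4} again (with $x\lej x$ trivial); and if $i<j<k$ it is precisely \ref{A5}. Conversely, if $\lePY$ is an order, then \ref{A6}$'$ follows as noted, and \ref{A4}, \ref{A5} are just special instances of transitivity of $\lePY$ (chaining the given relations, each of which is an instance of $\lePY$).

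\textbf{Part (b).} Assume \ref{A4}, \ref{A5}, \ref{A6}$'$, so by (a) $\lePY$ is an order. If $U$ is a clopen up-set in $\langle X;\lePY,\Tp\rangle$, then each $U\cap X_\ell$ is clopen in $X_\ell$ (as $X_\ell$ is clopen in $X$), and if $x\in U\cap X_i$, $y\in X_\ell$ with $x\leil y$ (so $x\lePY y$) then $y\in U$, hence $y\in U\cap X_\ell$; this is exactly the mutually-increasing condition (and the case $i=\ell$ gives that each $U\cap X_\ell$ is a $\lel$-up-set). For the converse, given mutually increasing clopen up-sets $U_j,\dots,U_k$, set $U := U_j\cup U_{j+1}\cup\dots\cup U_k$. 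This is clopen since each $U_i$ is clopen in $X_i$ and $X_i$ is clopen in $X$. One then checks $U$ is a $\lePY$-up-set: if $w\in U$, say $w\in U_i$, and $w\lePY z$ with $z\in X_\ell$, then $i\le\ell$; moreover $\ell\le k$ must hold, for otherwise there is no $\lekk[i\ell]$-type relation available between $X_i$ and $X_\ell$ within the index range --- more carefully, $w\lePY z$ with $w\in X_i$ forces $z\in X_\ell$ with $i\le\ell\le n$, and if $\ell>k$ then $z\notin\bigcup_{m\le k}X_m\supseteq U$, so we need only worry about $\ell\le k$; then $\ell\in[j,k]$ and mutual increasingness gives $z\in U_\ell\subseteq U$. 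Finally $U\cap X_i = U_i$ for $i\in[j,k]$ since the $X_i$ are pairwise disjoint.

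\textbf{Part (c).} Suppose $\X$ satisfies \ref{A4}--\ref{A7}. By (a), $\lePY$ is an order on $X$. Compactness of $\langle X;\Tp\rangle$: the topology $\Tp$ is the disjoint-union topology, and each $X_\ell$ is compact --- for $\ell=0$ by \ref{A6} (or \ref{A6}$^0$) and for $\ell\ge 1$ by \ref{A6}$''$ (which follows from \ref{A6}) --- and a finite disjoint union of compact spaces is compact. For total order-disconnectedness: let $x\nlePY y$ in $X$, with $x\in X_j$, $y\in X_k$. I would split into cases. If $j=0$ or $k=0$: here I use that when one of the sorts is $0$, $x\lePY y$ fails by definition unless... actually $\lePY$ only relates points in sorts $\ge 1$, so I must be careful --- but note $x\nlePY y$ is automatic whenever $x$ or $y$ lies in $X_0$ (unless $x=y$, impossible as $x\ne y$ when in different... no, $x=y$ is possible only if same sort). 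Hmm, so the separating clopen up-set must still be produced. For $x,y\in X_0$ with $x\nlez y$: \ref{A6} gives a clopen $\lez$-up-set $U_0\subseteq X_0$ with $x\in U_0$, $y\notin U_0$; then (using \ref{A1} to pull back along the $g_\ell$, or simply) extend $U_0$ to a clopen up-set of $X$ --- but one must check it stays a $\lePY$-up-set; since $\lePY$ does not relate $X_0$ to anything, $U_0$ itself, viewed inside $X$, together with the rest handled freely, works. The genuinely substantive case is $x\in X_j$, $y\in X_k$ with $j,k\ge 1$: if $j\le k$ then $x\nlejk y$ and \ref{A7}$'$ gives mutually increasing clopen up-sets $U_j,\dots,U_k$ separating them, which by (b) assemble into a clopen $\lePY$-up-set containing $x$ but not $y$; if $j>k$ then $x\nlePY y$ holds automatically and I separate using any clopen up-set containing $x$ but not $y$ built from the sort structure (e.g. take $U$ to be the union of all sorts $X_m$ with $m\ge$ the sort of $x$ together with an appropriate up-set inside $X_j$, or more simply take the clopen up-set generated appropriately). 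Conversely, if $\langle X;\lePY,\Tp\rangle$ is a Priestley space: $\lePY$ is an order, so by (a) $\X$ satisfies \ref{A4}, \ref{A5}, \ref{A6}$'$; for \ref{A7}, given $x\nlejk y$ we have $x\nlePY y$, total order-disconnectedness gives a clopen $\lePY$-up-set $U$ with $x\in U$, $y\notin U$, and (b) decomposes $U\cap X_j,\dots,U\cap X_k$ into the required mutually increasing clopen up-sets.

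\textbf{The main obstacle.} The delicate point throughout (c), and to a lesser extent in (b), is the careful bookkeeping of sorts when the index $0$ or out-of-range indices are involved --- $\lePY$ is defined only on sorts in $[1,n]$, so ``$\lePY$-up-set'' places no constraint linking $X_0$ to the other sorts, and one must make sure the assembled clopen sets are genuinely $\lePY$-up-sets while still separating the given pair of points, including the degenerate cases where the two points lie in incomparable sorts or in $X_0$. I expect the bulk of the write-up to be dispatching these cases cleanly; the ``interesting'' content (the interaction of \ref{A4}, \ref{A5}, \ref{A7} with transitivity and mutual increasingness) is essentially immediate once the framework of part~(a) is in place.
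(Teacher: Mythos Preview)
Your outline for parts (a) and (c) is essentially the paper's argument, but there is a genuine gap in your part~(b), and a smaller omission in the converse of~(c).

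\textbf{The error in (b).} Your candidate $U := U_j\cup U_{j+1}\cup\dots\cup U_k$ is \emph{not} in general a $\lePY$-up-set. You write ``if $\ell>k$ then $z\notin\bigcup_{m\le k}X_m\supseteq U$, so we need only worry about $\ell\le k$'', but this is exactly backwards: you are trying to prove that $U$ is an up-set, i.e.\ that $w\in U$ and $w\lePY z$ force $z\in U$. If $w\in U_i$ with $i\in[j,k]$ and $z\in X_\ell$ with $\ell>k$, then $w\leil z$ is perfectly possible (the inter-sort relations $\leil$ exist for all $i<\ell$ in $[1,n]$), yet $z\notin U$. So your $U$ fails to be an up-set whenever $k<n$ and some such comparability occurs. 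The fix, which the paper uses, is to pad on the right:
\[
U := U_j\cup\dots\cup U_k\cup X_{k+1}\cup\dots\cup X_n.
\]
Now any $z$ of rank exceeding $k$ is automatically in $U$, and for ranks in $[j,k]$ your mutual-increasingness argument goes through. This same padding idea is what makes the $j>k$ and $j=k$ cases in part~(c) work cleanly (take $U=X_j\cup\dots\cup X_n$, respectively $U=V\cup X_{j+1}\cup\dots\cup X_n$), so your vague ``clopen up-set generated appropriately'' there is really the same construction.

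\textbf{The omission in (c), converse direction.} You derive \ref{A4}, \ref{A5}, \ref{A6}$'$, and \ref{A7}, but the statement asks for \ref{A4}--\ref{A7}, which includes the full \ref{A6}: each $\langle X_k;\lek,\Tp_k\rangle$ must be a Priestley space, not merely ordered. This follows because each $X_k$ is a closed subspace of the Priestley space $\langle X;\lePY,\Tp\rangle$ and inherits its order and topology, but you should say so explicitly.
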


\begin{proof}
(a) Assume that $\X$ satisfies \textup{\ref{A4}, \ref{A5}} and \textup{\ref{A6}$'$}. Since $\lek$ is reflexive by \ref{A6}$'$, it follows immediately that $\lePY$ is reflexive. Assume that $x\lePY y$ and $y \lePY x$, for some $x, y\in X$,
say $x\in X_j$ and $y\in X_k$. It follows from the definition of $\lePY$ on $X$ that $j\le k$ and $k\le j$, whence
$x, y\in X_k$, for some $k\in [0, n]$. Hence $x \lek y$ and $y \lek x$, and as $\lek$ is antisymmetric, we have $x = y$.
Hence, $\lePY$ is antisymmetric. Now assume that $x, y, z\in X$ with $x\lePY y$ and $y\lePY z$. Then either
\begin{enumerate}[label=\textup{(\roman*)}, itemsep=0pt] 

\item $x$, $y$ and $z$ belong to the same sort, in which case \ref{A6}$'$ guarantees that $x \lePY z$, or

\item there exist $j < k$ in $[1, n]$ with either $x, y\in X_j$ and $z\in X_k$ or $x\in X_j$ and $y, z\in X_k$, in which case \ref{A4} guarantees that $x \lePY z$, or

\item
there exist $j < k < \ell$ in $[1, n]$ with $x\in X_j$, $y\in X_k$ and $z\in X_\ell$, in which case \ref{A5} guarantees that $x \lePY z$.

\end{enumerate}
Hence $\lePY$ is transitive.

The converse implication is almost immediate as \ref{A4} and \ref{A5} follow from the transitivity of $\lePY$.

(b) If $U$ is a clopen up-set in $\langle X; \lePY, \Tp\rangle$, then $U\cap X_i$ is a clopen in $X_i$ and the definition of $\lePY$ guarantees that $U\cap X_0,
\dots, U\cap X_n$ are mutually increasing up-sets. Now let $j < k$ in $[0, n]$, let $U_i\subseteq X_i$, for all $i\in [j, k]$, and assume that $U_j, U_{j+1}, \dots, U_k$ are mutually increasing clopen up-sets. Then
\[
U:= U_j \cup U_{j+1} \cup \dots\cup U_k\cup X_{k+1}\cup \dots \cup X_n
\]
is a clopen up-set in $\langle X; \lePY, \Tp\rangle$ satisfying $U_i = U \cap X_i$, for all $i\in [j, k]$, as required.

(c) Assume that $\X$ satisfies \ref{A4}--\ref{A7}. By (a) it remains to prove that $\langle X; \lePY, \Tp\rangle$ is totally order-disconnected. Let $x, y \in X$ with $x\nlePY y$. Let $x\in X_j$ and $y\in X_k$. If $j > k$, then $U := X_j \cup X_{j+1} \cup \dots \cup X_n$ is a clopen up-set in $\langle X; \lePY, \Tp\rangle$ containing $x$ but not~$y$. If $j = k$, then, since $\langle X_j; \lej, \Tp_j\rangle$ is a Priestley space, by \ref{A6}, there is a clopen up-set $V$ in $X_j$ containing $x$ but not~$y$. Then $U := V \cup X_{j+1} \cup \dots \cup X_n$ is a clopen up-set in $\langle X; \lePY, \Tp\rangle$ containing $x$ but not~$y$. If $j < k$, then $x\nlePY y$ implies that $x\nlejk y$. Hence by \ref{A7} and~(b), there exists a clopen up-set $U$ of $\langle X; \lePY, \Tp\rangle$ containing $x$ but not~$y$.
Thus $\langle X; \lePY, \Tp\rangle$ is totally order-disconnected.

Conversely, assume that $\langle X; \lePY, \Tp\rangle$ is a Priestley space. By (a), it remains to show that \ref{A6} and \ref{A7} hold. Let $k\in [0, n]$. Since $X_k$ is a closed subset of $\langle X; \lePY, \Tp\rangle$ and since the order and topology on $\langle X_k; \lek,{\Tp}_k \rangle$ are the restrictions of those on $\langle X; \lePY, \Tp\rangle$, it follows at once that $\langle X_k; \lek,{\Tp}_k \rangle$ is a Priestley space. Thus \ref{A6} holds.

Now let $j < k$ in $[1, n]$ and let $x\in X_j$ and $y\in X_k$ with $x \nlejk y$. Hence $x\nlePY y$ and, since $\langle X; \lePY, \Tp\rangle$ is a Priestley space, there exists a clopen up-set $U$ in $\langle X; \lePY, \Tp\rangle$ with $x\in U$ and $y\notin U$. Define $U_i := U\cap X_i$, for all $i\in [j,k]$. By (b), the sets $U_j, U_{j+1}, \dots, U_k$ are mutually increasing clopen up-sets such that $x\in U_j$ and $y\in X_k\comp U_k$, whence \ref{A7} holds.
\end{proof}

\begin{df}
Let  $\X = \langle X_0 \du \cdots \du X_n  ; \mathcal G_{(n)}, \mathcal {S}_{(n)}, \Tp\rangle$ be an object in $\CX_n$. For convenience, we shall define $g_0$ to be the identity function on~$X_0$.
We define a new structure
\[
\mathrm{F}(\X) := \langle X; \lePY, g, \rank, \Tp\rangle
\]
as follows:
\begin{itemize}[itemsep=0pt] 

\item $X := X_0 \du \cdots \du X_n$,

\item $\lePY$ is the binary relation on $X$ defined in Lemma~\ref{lem:lePYonX},

\item   $g\colon X \to X$ is defined by $g(x) := g_k(x)$, for all $x\in X_k$ and all $k\in [0, n]$,

\item $\rank \colon X \to [0,n]$ is defined by $\rank(x) := k$, for all $x\in X_k$ and for all $k\in [0, n]$,

\end{itemize}
\end{df}

For example, $\mathrm{F}(\MT_n) = \langle M_0 \du \dots\du M_n; \lePY, g, \rank, \Tp\rangle$ is illustrated in Figure~\ref{fig:FofMn}.

\begin{figure}[ht]
\centering
\small
\begin{tikzpicture}

%level 0
\begin{scope}[scale=0.5, xshift=4cm]
  % Elements
  \node[unshaded] (botb) at (0,1.4) {};
  \node[unshaded] (fb) at (-1,2.4) {};
  \node[unshaded] (tb) at (1,2.4) {};
  \node[unshaded] (topb) at (0,3.4) {};
  % Order
  \draw[order] (botb) -- (fb) -- (topb);
  \draw[order] (botb) -- (tb) -- (topb);

  % Labels
  \node[label,anchor=west,yshift=2pt] at (tb) {$\mbt^{0}$};
  \node[label,anchor=west,yshift=-2pt,xshift=-2pt] at (botb) {$\bot^{0}$};
  \node[label,anchor=west,yshift=4pt,xshift=-2pt] at (topb) {$\top^{0}$};
  \node[label,anchor=east] at (fb) {$\mbf^{0}$};
\end{scope}

%%%%%%%%%%%%
%%top chain
\begin{scope}[scale=0.5, xshift=2.75cm, yshift=-4cm]

  \node[unshaded] (topg1) at (0,1){};
  \node[unshaded] (topg2) at (0,-1.5){};
  \node[unshaded] (topg3) at (0,-0.5){};

  % g map
  \path (topg1)  edge [map, bend left=5] node {} (topb);

  % Order
  \node at ($0.35*(topg1)+0.65*(topg3)$) {$\cdot$};
  \node at ($0.5*(topg1)+0.5*(topg3)$) {$\cdot$};
  \node at ($0.65*(topg1)+0.35*(topg3)$) {$\cdot$};
  \draw (topg1) -- ($0.8*(topg1)+0.2*(topg3)$);
  \draw (topg3) -- ($0.2*(topg1)+0.8*(topg3)$);
  \draw[order] (topg2) -- (topg3);

  % Labels
  \node[label,anchor=east] at (topg1) {$\top^{n}$};
  \node[label,anchor=east] at (topg2) {$\top^1$};
  \node[label,anchor=east] at (topg3) {$\top^2$};
\end{scope}

%%bot chain
\begin{scope}[scale=0.5, xshift=4.75cm, yshift=-4cm]

  \node[unshaded] (botg1) at (0,1){};
  \node[unshaded] (botg2) at (0,-1.5){};
  \node[unshaded] (botg3) at (0,-0.5){};

  % g map
  \path (botg1)  edge [map, bend left=10] node {} (botb);
  \node at ($0.35*(botg1)+0.65*(botg3)$) {$\cdot$};
  \node at ($0.5*(botg1)+0.5*(botg3)$) {$\cdot$};
  \node at ($0.65*(botg1)+0.35*(botg3)$) {$\cdot$};

  \draw (botg1) -- ($0.8*(botg1)+0.2*(botg3)$);
  \draw (botg3) -- ($0.2*(botg1)+0.8*(botg3)$);
  \draw[order] (botg2) -- (botg3);

  % Labels
  \node[label,anchor=west] at (botg1) {$\bot^{n}$};
  \node[label,anchor=west] at (botg2) {$\bot^1$};
  \node[label,anchor=west] at (botg3) {$\bot^2$};
\end{scope}

%%0 f (false) subset
\begin{scope}[scale=0.5, xshift=-2cm, yshift=-3cm]

  \node[unshaded] (fg1) at (0,0) {};
  \node[unshaded] (0g1) at (0,1){};
  \node[unshaded] (0gz) at (-2.5,-1.5) {};
  \node[unshaded] (fgz) at (-2.5,-2.5){};
  \node[unshaded] (0gzz) at (-1.5,-0.5) {};
  \node[unshaded] (fgzz) at (-1.5,-1.5){};

  % Order
  \draw[order] (fg1) -- (0g1);
  \draw[order] (fgz) -- (0gz);
  \draw[order] (fgz) -- (fgzz);
  \draw[order] (0gz) -- (0gzz);
  \draw[order] (fgzz) -- (0gzz);
  \node at ($0.35*(fg1)+0.65*(fgzz)$) {$\cdot$};
  \node at ($0.5*(fg1)+0.5*(fgzz)$) {$\cdot$};
  \node at ($0.65*(fg1)+0.35*(fgzz)$) {$\cdot$};
  \node at ($0.35*(0g1)+0.65*(0gzz)$) {$\cdot$};
  \node at ($0.5*(0g1)+0.5*(0gzz)$) {$\cdot$};
  \node at ($0.65*(0g1)+0.35*(0gzz)$) {$\cdot$};
  \draw (fg1) -- ($0.8*(fg1)+0.2*(fgzz)$);
  \draw (0g1) -- ($0.8*(0g1)+0.2*(0gzz)$);
  \draw (fgzz) -- ($0.2*(fg1)+0.8*(fgzz)$);
  \draw (0gzz) -- ($0.2*(0g1)+0.8*(0gzz)$);

  % g map
  \path (0g1)  edge [map, bend left=10] node {} (fb);
  \node[label,anchor=east] at (2.25,3.5) {$g$};

  % Labels
  \node[label,anchor=east] at (0g1) {$\zero^{n}$};
  \node[label,anchor=west] at (fg1) {$\mbf^{n}$};
  \node[label,anchor=east] at (0gz) {$\zero^1$};
  \node[label,anchor=west] at (fgz) {$\mbf^1$};
  \node[label,anchor=east] at (0gzz) {$\zero^2$};
  \node[label,anchor=west] at (fgzz) {$\mbf^2$};

  \end{scope}

%  %1 t (true) subset
\begin{scope}[scale=0.5, xshift=9cm, yshift=-3cm]

  \node[unshaded] (tg1) at (0,0) {};
  \node[unshaded] (1g1) at (0,1){};
  \node[unshaded] (1gz) at (2.5,-1.5) {};
  \node[unshaded] (tgz) at (2.5,-2.5){};
  \node[unshaded] (1gzz) at (1.5,-0.5) {};
  \node[unshaded] (tgzz) at (1.5,-1.5){};

  % Order
  \draw[order] (tg1) -- (1g1);
  \draw[order] (1gz) -- (1gzz);
  \draw[order] (tgz) -- (tgzz);
  \draw[order] (tgz) -- (1gz);
 \draw[order] (tgzz) -- (1gzz);
  \node at ($0.35*(tg1)+0.65*(tgzz)$) {$\cdot$};
  \node at ($0.5*(tg1)+0.5*(tgzz)$) {$\cdot$};
  \node at ($0.65*(tg1)+0.35*(tgzz)$) {$\cdot$};
  \node at ($0.35*(1g1)+0.65*(1gzz)$) {$\cdot$};
  \node at ($0.5*(1g1)+0.5*(1gzz)$) {$\cdot$};
  \node at ($0.65*(1g1)+0.35*(1gzz)$) {$\cdot$};
  \draw (tg1) -- ($0.8*(tg1)+0.2*(tgzz)$);
  \draw (1g1) -- ($0.8*(1g1)+0.2*(1gzz)$);
  \draw (tgzz) -- ($0.2*(tg1)+0.8*(tgzz)$);
  \draw (1gzz) -- ($0.2*(1g1)+0.8*(1gzz)$);
  \draw (tg1) -- ($0.8*(tg1)+0.2*(tgzz)$);
  \draw (1g1) -- ($0.8*(1g1)+0.2*(1gzz)$);
  \draw (tgzz) -- ($0.2*(tg1)+0.8*(tgzz)$);
  \draw (1gzz) -- ($0.2*(1g1)+0.8*(1gzz)$);

  % g map
  \path (1g1)  edge [map, bend right=10] node {} (tb);

  % Labels
  \node[label,anchor=west] at (1g1) {$\one^{n}$};
  \node[label,anchor=east] at (tg1) {$\mbt^{n}$};
  \node[label,anchor=west] at (1gz) {$\one^1$};
  \node[label,anchor=east] at (tgz) {$\mbt^1$};
  \node[label,anchor=west] at (1gzz) {$\one^2$};
  \node[label,anchor=east] at (tgzz) {$\mbt^2$};

  \end{scope}

\end{tikzpicture}
\caption{The structure $\mathrm{F}(\MT_n)$.}\label{fig:FofMn}
\end{figure}

The next lemma gives properties of  $\mathrm{F}(\X)$ that will be used to define the category~$\CY_n$. Given $n\in \omega\comp\{0\}$, we define an \emph{$n$-ranking} of a Priestley space
$\langle X; \lePY, \Tp\rangle$ to be a continuous order-preserving map, $\rank$, from $\langle X; \lePY, \Tp\rangle$ to the finite Priestley space $\langle[0, n]; \le, \Tp_d\rangle$, where $\le$ is the usual order inherited from~$\mathbb Z$ and $\Tp_d$ is the discrete topology.

\begin{lem}\label{lem:F(X)}

Let $\X\in\CX_n$ and let $\mathrm{F}(\X) := \langle X; \lePY, g, \rank, \Tp\rangle$ be the structure defined above. Then
\begin{enumerate}[label=\textup{(B\arabic*)},leftmargin=1.7\parindent,itemsep=0pt] 

\item\label{B1} $\langle X; \lePY, \Tp\rangle$ is a Priestley space,

\item\label{B2} $g$ is a continuous retraction,

\item\label{B3} $x\lePY y$ implies $g(x) = g(y)$, for all $x,y\in X$,

\item\label{B4} $g(X)$ is a union of order components of $\langle X; \lePY \rangle$,

\item\label{B5} $\rank \colon X \to [0, n]$ is an $n$-ranking of $\langle X; \lePY, \Tp\rangle$,

\item\label{B6} $g(X) = \{\,x\in X \mid \rank(x) = 0\,\}$.

\end{enumerate}

\end{lem}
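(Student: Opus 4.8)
The plan is to verify each of the six properties (B1)--(B6) essentially by unwinding the definitions of $\mathrm{F}(\X)$, using Theorem~\ref{thm:dualcategory} to know that $\X$ satisfies \ref{A1}--\ref{A7} (and hence also the refined conditions (A6)$^0$, (A6)$'$, (A6)$''$, (A7)$'$), and leaning on Lemma~\ref{lem:lePYonX} for the properties of $\lePY$. So first I would record that, since $\X\in\CX_n$, Theorem~\ref{thm:dualcategory} gives us \ref{A1}--\ref{A7}.

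For \ref{B1}: since \ref{A4}--\ref{A7} hold, Lemma~\ref{lem:lePYonX}(c) immediately gives that $\langle X;\lePY,\Tp\rangle$ is a Priestley space. For \ref{B3}: if $x\lePY y$ then by definition $x\in X_j$, $y\in X_k$ with $j\le k$ and $x\lejk y$; if $j=k$ then $g(x)=g_k(x)=g_k(y)=g(y)$ by \ref{A2}, while if $j<k$ then $g(x)=g_j(x)=g_k(y)=g(y)$ by \ref{A3}. For \ref{B2}: $g$ restricted to $X_0$ is the identity $g_0$, so $g\circ g=g$ (for $x\in X_k$, $g(x)=g_k(x)\in X_0$, hence $g(g(x))=g(x)$), i.e.\ $g$ is a retraction onto $g(X)$; continuity of $g$ follows from \ref{A1} (each $g_k\colon X_k\to X_0$ is continuous, $X_0$ is clopen as a summand, and a map on a disjoint union is continuous iff its restriction to each summand is). Order-preservation of $g$ as a self-map of $\langle X;\lePY\rangle$: if $x\lePY y$ then $g(x)=g(y)$ by \ref{B3}, so trivially $g(x)\lePY g(y)$ by reflexivity; thus $g$ is a morphism of Priestley spaces and a retraction.

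For \ref{B5}: $\rank$ sends $X_k$ constantly to $k$; it is continuous because each $X_k$ is clopen, and it is order-preserving because $x\lePY y$ forces $\rank(x)=j\le k=\rank(y)$ directly from the definition of $\lePY$. Hence $\rank$ is an $n$-ranking. For \ref{B6}: $g(X)=g_0(X_0)\cup\bigcup_{k=1}^n g_k(X_k)\subseteq X_0$, and conversely $g_0$ is the identity on $X_0$ so $X_0\subseteq g(X)$; thus $g(X)=X_0=\{x\in X\mid\rank(x)=0\}$. For \ref{B4}: I would argue that $X_0=g(X)$ is a union of order components of $\langle X;\lePY\rangle$. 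The key observation is that no element of $X_0$ is $\lePY$-comparable to any element outside $X_0$: by the definition of $\lePY$, $x\lePY y$ with $x$ or $y$ in $X_0$ requires (since both indices must lie in $[1,n]$ unless $x=y$) that $x=y$; more precisely the definition only relates elements of sorts in $[1,n]$, with the diagonal giving reflexivity on $X_0$. So $X_0$ is simultaneously a down-set and an up-set in $\langle X;\lePY\rangle$, hence clopen in the order-component topology and a union of components; alternatively, the connected component of any $x\in X_0$ stays inside $X_0$ because one cannot cross from $X_0$ to any $X_k$ by $\lePY$-edges.

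The main obstacle is \ref{B4}: one must be careful about precisely what ``order component'' means here and confirm that $\lePY$ never links $X_0$ to the higher sorts. Once the observation that $\lePY$-comparability is confined to sorts in $[1,n]$ (apart from reflexivity) is made explicit, \ref{B4} follows, but stating it cleanly — ideally by showing $X_0$ is clopen in the graph-component sense, or by invoking that $g(X)$, being the fixed-point set of the retraction $g$ satisfying \ref{B3}, is a union of fibres of the component map — is the one step that needs more than a one-line definitional check. Everything else is routine unwinding of definitions together with \ref{A1}--\ref{A3} and Lemma~\ref{lem:lePYonX}.
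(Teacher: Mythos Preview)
Your proposal is correct and follows essentially the same approach as the paper's proof: each item is verified by unwinding the definitions together with \ref{A1}--\ref{A3} and Lemma~\ref{lem:lePYonX}(c), and the paper handles \ref{B4} by exactly the observation you single out, namely that the inter-sort relations $\lejk$ apply only for $j,k\in[1,n]$, so $X_0=g(X)$ has no $\lePY$-edges to other sorts. The paper is simply terser; your added remark on order-preservation of $g$ is harmless but not needed for ``continuous retraction''.
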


\begin{proof}
\ref{B1} follows from Lemma~\ref{lem:lePYonX}(c).

\ref{B2} follows immediately from the fact that $g_k$ maps $X_k$ to $X_0$, for all $k\in [1, n]$, and the fact that $g_0 = \id_{X_0}$.

\ref{B3} is immediate from the definitions of $g$ and $\lePY$
and the fact that $\X$ satisfies \ref{A2} and~\ref{A3}.

\ref{B4} follows from the fact that $g(X) = X_0$ and the fact that the 
relation $\lejk$ between distinct sorts applies only for $j < k$ in $[1, n]$.

Since $\{ X_0, \dots, X_n\}$ is a partition (with possibly empty blocks) 
of $X$ into closed subsets, the function
$\rank\colon X\to [0, n]$ is continuous. To prove~\ref{B5} it remains to prove that $\rank$ is order-preserving. Let $x, y\in X$ with $x \lePY y$. Then either $x, y \in X_k$ with $x\lek y$, for some $k\in [0, n]$, in which case $\rank(x) = k = \rank(y)$, or $x\in X_j$ and $y\in X_k$ with $x\lejk y$, for some $j < k$ in $[1, n]$, in which case $\rank(x) = j < k = \rank(y)$. Hence $\rank$ is order-preserving.

Finally, $g(X) = X_0 = \{\,x\in X \mid \rank(x) = 0\,\}$ by the definitions
of $g$ and of $\rank$, whence~\ref{B6} holds.
\end{proof}

\begin{df}\label{df:CYn}
We now define $\CY_n$ to be the category whose objects are topological structures $\langle Y; \lePY, g, \rank, \Tp\rangle$ satisfying~\ref{B1}--\ref{B6} and whose morphisms are continuous maps that preserve $\lePY$, $g$ and $\rank$. The topological structures in $\CY_n$ can be made first order by removing the `operation' $\rank$, adding $n+1$ topologically closed unary relations 
$Y_0, \dots, Y_n$, and replacing the assumption that $\rank$ is order-preserving by the assumption that $Y_0, \dots, Y_n$ form a partition of $Y$ (with possibly empty blocks) along with the following axiom for each $k\in [0, n]$:
\[
(\forall x, y\in Y)
\ x \in Y_k \And x \lePY y \implies y\in Y_k \text{ or } y\in Y_{k+1} \text{ or } \cdots \text{ or } y\in Y_n.
\]
\end{df}

\begin{df}
Let  $\Y = \langle Y; \lePY, g, \rank, \Tp\rangle$ be an object in $\CY_n$. We define a new structure
\[
\mathrm{G}(\Y)
:= \langle X_0 \du \cdots \du X_n  ; \mathcal G_{(n)}, \mathcal {S}_{(n)}, \Tp\rangle
\]
in the signature of $\M_n$ as follows:
\begin{itemize}[itemsep=0pt] 

\item $X_k := \{\, x\in Y \mid \rank(x) = k\,\}$, for all $k\in [0, n]$,

\item $g_k\colon X_k \to X_0$ is given by $g_k := g\rest{X_k}$, for all $k\in [1, n]$,

\item ${\lek} := {\lePY}\cap  {(X_k \times X_k)}$, for all $k\in [0, n]$,

\item ${\lejk} := {\lePY}\cap  {(X_j \times X_k)}$, for all $j < k$ in 
$[1, n]$.

\end{itemize}
\end{df}

We extend both $\mathrm{F}$ and $\mathrm{G}$ to morphisms by defining $\mathrm{F}(\varphi) = \varphi$ and $\mathrm{G}(\varphi) = \varphi$.

\goodbreak

\begin{thm}\label{thm:catequiv}
Fix $n \in \omega\comp\{0\}$. Then $\mathrm{F}\colon \CX_n \to \CY_n$ and $\mathrm{G}\colon \CY_n \to \CX_n$ are well-defined, mutually inverse category isomorphisms.
\end{thm}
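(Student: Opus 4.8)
The plan is to verify directly that $\mathrm F$ and $\mathrm G$ are well-defined functors that are mutually inverse, most of the substance having already been isolated in Lemmas~\ref{lem:lePYonX} and~\ref{lem:F(X)}.

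\textbf{Step 1: $\mathrm F$ is well defined on objects.} Given $\X\in\CX_n=\IScP(\MT_n)$, Theorem~\ref{thm:dualcategory} tells us that $\X$ satisfies \ref{A1}--\ref{A7}, hence in particular \ref{A4}, \ref{A5}, \ref{A6}$'$ and \ref{A4}--\ref{A7}, so Lemma~\ref{lem:F(X)} applies and shows that $\mathrm F(\X)$ satisfies \ref{B1}--\ref{B6}; thus $\mathrm F(\X)\in\CY_n$.

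\textbf{Step 2: $\mathrm G$ is well defined on objects.} Given $\Y=\langle Y;\lePY,g,\rank,\Tp\rangle\in\CY_n$, I must check that $\mathrm G(\Y)$ satisfies \ref{A1}--\ref{A7}, so that by Theorem~\ref{thm:dualcategory} it lies in $\CX_n$. Since $\rank$ is continuous and $[0,n]$ is discrete, each $X_k=\rank^{-1}(k)$ is clopen in $Y$, so the $X_k$ are compact and the subspace topology is the disjoint-union topology; \ref{A1} then follows from continuity of $g$, and the sort-wise restrictions of $\lePY$ are closed. Axiom \ref{A2} and \ref{A3} are immediate from \ref{B3} (if $x\lek y$ or $x\lejk y$ then $x\lePY y$, so $g(x)=g(y)$). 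For \ref{A4} and \ref{A5}: since $\lePY$ is a partial order (it is the order of a Priestley space by \ref{B1}), transitivity gives exactly the composite inequalities demanded, and one reads off that the resulting pair lies in the appropriate $X_i\times X_\ell$ using that $\rank$ is order-preserving (\ref{B5}). For \ref{A6}: each $\langle X_k;\lek,\Tp_k\rangle$ is a closed ordered subspace of the Priestley space $\langle Y;\lePY,\Tp\rangle$, hence itself Priestley. For \ref{A7}: given $j<k$ and $x\in X_j$, $y\in X_k$ with $x\nlejk y$, we have $x\nlePY y$ in the Priestley space $\langle Y;\lePY,\Tp\rangle$, so there is a clopen up-set $U\subseteq Y$ with $x\in U$, $y\notin U$; put $U_\ell:=U\cap X_\ell$ for $\ell\in[j,k]$, which are mutually increasing clopen up-sets by the argument in Lemma~\ref{lem:lePYonX}(b), with $x\in U_j$ and $y\in X_k\comp U_k$. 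Hence $\mathrm G(\Y)\in\CX_n$.

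\textbf{Step 3: morphisms and mutual inverseness.} For a morphism $\varphi\colon\X\to\X'$ in $\CX_n$ (continuous, sort-preserving, preserving each $g_k$ and each $\lejk$), I check that $\varphi\colon\mathrm F(\X)\to\mathrm F(\X')$ preserves $\lePY$, $g$ and $\rank$: preservation of $\lePY$ is immediate from its definition via the $\lejk$ together with sort-preservation; preservation of $g$ follows because on $X_k$ both $g$ and $g_k$ agree and $\varphi$ preserves $g_k$ (with $g_0=\id$); preservation of $\rank$ is just sort-preservation. Conversely, a $\CY_n$-morphism preserves each $X_k=\rank^{-1}(k)$, hence is sort-preserving, and preservation of $g$ and of $\lePY$ restricts to preservation of the $g_k$ and the $\lejk$; so $\mathrm G(\varphi)$ is a $\CX_n$-morphism. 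Finally, $\mathrm{GF}(\X)=\X$ and $\mathrm{FG}(\Y)=\Y$ on the nose: starting from $\X$, the underlying set and topology are unchanged, $X_k$ recovers the $k$-th sort since $\rank(x)=k\iff x\in X_k$, the relations $\lek=\lePY\cap(X_k\times X_k)$ and $\lejk=\lePY\cap(X_j\times X_k)$ recover the originals (for $j=k$ trivially, for $j<k$ because $\lePY$ was \emph{defined} on cross-sorts to be exactly $\lejk$, and there are no cross-sort relations with $j>k$), and $g_k=g\rest{X_k}$ recovers the original $g_k$; starting from $\Y$, reassembling the sorts gives back $Y$ with its topology, $\lePY$ is rebuilt from its sort-wise pieces (this is where one uses that $\langle Y;\lePY,\Tp\rangle$ is a Priestley space whose order is determined by the $Y_k$ via \ref{B5}, so no cross-sort comparabilities with decreasing rank exist), $g$ is reassembled from the $g_k$ and $g_0=\id$, and $\rank$ is recovered from the partition. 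Since $\mathrm F$ and $\mathrm G$ act as the identity on morphisms, the functor identities on morphisms are trivial.

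\textbf{Main obstacle.} The only place needing genuine care is the round-trip $\mathrm{GF}=\id$ at the level of the relations, specifically checking that decomposing $\lePY$ into its sort-wise slices $\lejk$ and then reconstituting it loses nothing --- i.e.\ that $\lePY$ has no comparabilities $x\lePY y$ with $\rank(x)>\rank(y)$ and none ``skipping'' in a way not captured by the $\lejk$ with $j<k$. This is exactly guaranteed by \ref{B5} (rank is order-preserving, ruling out decreasing rank) together with the definition of $\lePY$ in Lemma~\ref{lem:lePYonX}, so it is a bookkeeping verification rather than a real difficulty; everything else is routine transport of the axioms through Lemmas~\ref{lem:lePYonX} and~\ref{lem:F(X)}.
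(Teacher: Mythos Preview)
Your proposal is correct and follows essentially the same approach as the paper; the only organizational difference is that the paper first establishes $\mathrm{F}\mathrm{G}(\Y)=\Y$ (in particular that the reconstituted order $\lePY'$ equals the original~$\lePY$) and then invokes Lemma~\ref{lem:lePYonX}(c) once to obtain \ref{A4}--\ref{A7} for $\mathrm G(\Y)$ in a single stroke, whereas you verify \ref{A4}--\ref{A7} directly and do the round-trip check afterwards. Two small slips: in your ``Main obstacle'' paragraph you write $\mathrm{GF}=\id$ but describe $\mathrm{FG}=\id$ (decompose~$\lePY$ then reassemble); and for that direction \ref{B5} alone is not quite enough, since it does not forbid comparabilities $x\lePY y$ with $\rank(x)=0<\rank(y)$ --- you need \ref{B4} (that $g(Y)=Y_0$ is a union of order components) to rule those out, as there is no relation $\le^{0k}$ in the signature to carry them.
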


\begin{proof}
By Lemma~\ref{lem:F(X)}, $\mathrm{F}$ is well defined at the object level. Assume that $\varphi\colon \X \to \Y$ is an $\CX_n$-morphism, for some $\X, \Y\in \CX_n$. The definitions of $g$ and $\lePY$ guarantee that they are preserved by~$\varphi$.
Since $\rank(x) = k$ if and only if $x\in X_k$ and since $\varphi$ preserves the sorts, it follows that $\varphi$ preserves $\rank$. Thus $\varphi\colon \mathrm{F}(\X) \to \mathrm{F}(\Y)$ is a $\CY_n$-morphism. Hence $\mathrm{F}$ is well defined and it is trivial that $\mathrm{F}$ is a functor.

Before proving that $\mathrm{G}$ is well defined we note
that it is almost trivial that $\mathrm{G}(\mathrm{F}(\X)) = \X$, for all $\X\in \CX_n$, and the only non-trivial part of proving that $\mathrm{F}(\mathrm{G}(\Y)) = \Y$, for all $\Y\in \CY_n$, is showing that the order $\lePY$ defined from $\mathrm{G}(\Y)$ agrees with the original order $\lePY$ on~$\Y$. To this end, consider an object $\Y = \langle Y; \lePY, g, \rank, \Tp\rangle$ in $\CY_n$ and denote the order defined on $\mathrm{F}(\mathrm{G}(\Y))$ from $\mathrm{G}(\Y)$ by~$\lePY'$. We must prove that ${\lePY} = {\lePY'}$. First, assume that $x,y\in Y$ with $x\lePY y$. Then $\rank(x)\le\rank(y)$, as $\Y$ satisfies~\ref{B5}.
\begin{itemize}[itemsep=0pt] 

\item If $\rank(x) = k = \rank(y)$, then in $\mathrm{G}(\Y)$ we have $x, y\in X_k$ and $x \lek y$.

\item If $\rank(x) = j < k = \rank(y)$, then in $\mathrm{G}(\Y)$ we have $x\in X_j$, $y\in X_k$ and $x \lejk y$.
\end{itemize}
In both cases, it follows from the definition of $\lePY'$ on $\mathrm{F}(\mathrm{G}(\Y))$ that $x\lePY' y$. Hence ${\lePY} \subseteq {\lePY'}$. Now let $x, y\in Y$ with $x\lePY' y$. From the definitions of $X_k$, $\lek$ and $\lejk$ in $\mathrm{G}(\Y)$ we have
\begin{itemize}[itemsep=0pt] 

\item $\rank(x) = k = \rank(y)$, for some $k\in [0, n]$ and $x\lePY y$ in $\Y$, or

\item $\rank(x) = j < k = \rank(y)$, for some $j, k\in [1, n]$ and $x\lePY y$ in $\Y$.

\end{itemize}
In both cases, we have $x\lePY y$ and hence ${\lePY'} \subseteq {\lePY}$. Thus ${\lePY'} = {\lePY}$, as required.

To prove that $\mathrm{G}$ is well defined at the object level we must prove that, for all $\Y\in \CY_n$, the structure $\mathrm{G}(\Y)$ satisfies \ref{A1}--\ref{A7}.
Let $\Y\in \CY_n$. Then~\ref{A1} follows from~\ref{B2}, and both~\ref{A2} and~\ref{A3} follow from~\ref{B3}. By Lemma~\ref{lem:lePYonX}, the topological structure $\mathrm{G}(\Y)$ satisfies~\ref{A4}--\ref{A7} if and only if the ordered space $\langle Y; \lePY', \Tp\rangle$ \emph{defined from the structure $\mathrm{G}(\Y)$} is a Priestley space. As $\mathrm{F}(\mathrm{G}(\Y)) = \Y$, we have $\langle Y; \lePY', \Tp\rangle = \langle Y; \lePY, \Tp\rangle$, which is a Priestley space as $\Y$ satisfies~\ref{B1}. Hence $\mathrm{G}(\Y)$ satisfies~\ref{A4}--\ref{A7}.

Finally, assume that $\varphi\colon \X\to \Y$ is a $\CY_n$-morphism.
It is almost trivial from the definitions that $\varphi\colon \mathrm{G}(\X)\to \mathrm{G}(\Y)$ is an $\CX_n$-morphism.
This completes the proof.
\end{proof}

%%%%%%%%%%%%%%%%%%%%%%%%%%%%%%%%%%%%%%%%%%%%%%%%%%%%%%%%%%%%%%%%%%%%
\section{The Priestley dual of the $\CCD$-reduct of an algebra in $\CV_n$}\label{sec:Prdual}
%%%%%%%%%%%%%%%%%%%%%%%%%%%%%%%%%%%%%%%%%%%%%%%%%%%%%%%%%%%%%%%%%%%%

Priestley duality establishes a dual category equivalence between
the variety $\CCD$ of bounded distributive lattices, \emph{qua}
category,  and the category $\CP$ of Priestley spaces with
continuous order-preserving maps as morphisms (see
Priestley~\cite{Pri70, Pri72} and Davey and Priestley~\cite{ILO}).
The duality is given by natural hom-functors. Let $\two =\langle
\{0, 1\}; \wedge, \vee, 0, 1\rangle$ be the two-element bounded
lattice and let $\mathbbm 2 = \langle \{0, 1\}; \le, \Tp\rangle$ be
the two-element chain with the discrete topology. Then, at the
object level:

\begin{itemize}[itemsep=0pt]

\item the dual of $\A\in \CCD$ is $\mathrm H(\A) = \CCD(\A, \two)$, with its order and topology inherited from the power~$\mathbbm 2^A$;

\item the dual of $\X\in \CP$ is $\mathrm K(\X) = \CP(\X, \mathbbm 2)$, a sublattice of~$\two ^X$.

\end{itemize}

\goodbreak 

Once again, throughout this section, we fix $n \in \omega\comp\{0\}$. We now have functors:

\begin{itemize}[itemsep=0pt] 

\item $\mathrm{D}\colon \CV_n \to \CX_n$, mapping each algebra $\A\in \CV_n$ to its natural (multi-sorted) dual $\mathrm{D}(\A)$, and

\item $\mathrm{H}'\colon \CV_n \to \CP$, mapping each algebra $\A\in \CV_n$ to the Priestley dual $\mathrm{H}'(\A) := \mathrm{H}(\A^\flat)$, where
$\A^\flat = \langle A; \wedge, \vee, \mbf_0, \mbt_0\rangle$
is the bounded-distributive-lattice reduct of $\A$.

\end{itemize}
To be able to use the natural dual $\mathrm{D}(\A)$ and the Priestley dual $\mathrm{H}'(\A)$ in tandem, we wish to give an explicit description of the functor ${\mathrm{P}\colon \CX_n \to \CP}$, mapping each (multi-sorted) structure $\X\in \CX_n$ to
\[
\mathrm{P}(\X) := \mathrm{H}'(\mathrm{E}(\X)) = \mathrm{H}(\mathrm{E}(\X)^\flat).
\]

As $\CX_n$ is isomorphic to $\CY_n$ and the objects in $\CY_n$ are Priestley spaces with additional structure, it is natural to define $\mathrm{P}(\X)$ in terms of $\mathrm{F}(\X)$: we shall define $\mathrm{P}(\X)$ to be the disjoint union of $\mathrm{F}(\X)$ and its order-theoretic dual $\mathrm{F}(\X)^\partial$ with additional ordering defined via the map~$g$.

\begin{df}\label{def:P(X)}
Let  $\X = \langle X_0 \du \cdots \du X_n  ; \mathcal G_{(n)}, \mathcal {S}_{(n)}, \Tp\rangle$ be an object in $\CX_n$, define $X :=  X_0
\du \cdots \du X_n$ and let $\mathrm{F}(\X) = \langle X; g, \le, \rank, \Tp\rangle$ be the corresponding object in~$\CY_n$. Recall that $g$ is a retraction of $X$ onto $X_0$. We define
\[
\mathrm{P}(\X) := \langle X\du \widehat X; \leP, \Tp\rangle,
\]
where $\widehat X := \{\widehat x \mid x \in X\}$, $\Tp$ is the disjoint union topology, and $\leP$ is defined on $X\du \widehat X$ by
\begin{enumerate}[label=\textup{(\arabic*)},itemsep=0pt] 

\item for $x, y \in X$: \ $x\leP y \iff  x\le y$,

\item for $\widehat x, \widehat y \in \widehat X$: \ $\widehat x\leP \widehat y \iff  x\ge y$,

\item for $x \in X\comp X_0$ and $y\in X_0$: \   $x\leP y \iff g(x) \le y$,

\item for $x \in X\comp X_0$ and $\widehat y\in \widehat X_0$: \   $x\leP \widehat y \iff g(x) \ge y$,

\item for $x\in X_0$ and $\widehat y \in \widehat X\comp \widehat X_0$: \   $x\leP \widehat y \iff x \le g(y)$,

\item for $\widehat x\in \widehat X_0$ and $\widehat y \in \widehat X\comp \widehat X_0$: \   $\widehat x\leP \widehat y \iff x \ge g(y)$,

\item for $x\in X\comp X_0$ and $\widehat y \in \widehat X\comp \widehat X_0$: \ $x\leP \widehat y \iff g(x) \le g(y) \text{ or } g(x)\ge g(y)$.

\end{enumerate}

Part (7) of the definition of $\leP$ can be thought of as obtaining $x\leP \widehat y$ by passing through $X_0$ (via $g(x) \le g(y)$) or through $\widehat X_0$ (via $g(x) \ge g(y)$). For example, if $x\in X\comp X_0$ and $\widehat y \in \widehat X\comp \widehat X_0$ with $g(x)\ge g(y)$, then we have
\[
g(x) \ge g(x) \And g(x)\ge g(y) \And g(y) \ge g(y).
\]
Since $g(x), g(y) \in X_0$, by applying, in order, (4) then (2) then (6), we have
\[
x\leP \widehat{g(x)} \And \widehat{g(x)} \leP \widehat{g(y)} \And \widehat{g(y)}\leP \widehat y,
\]
and therefore, if $\leP$ is to be transitive, we must have $x\leP \widehat y$.

We extend $\mathrm P$ to morphisms in the obvious way: if $\varphi\colon \X \to \Y$ is an $\CX_n$-morphism, then $\mathrm P(\varphi) \colon \mathrm P(\X) \to \mathrm P(\Y)$ is given by $\mathrm P(\varphi)(x) = \varphi(x)$, for $x\in X$, and $\mathrm P(\varphi)(\widehat x) = \widehat{\varphi(x)}$, for $\widehat x\in \widehat X$.

\end{df}

The next section will be devoted to proving the following result.

\begin{thm}\label{thm:trans}
Fix $n \in \omega\comp\{0\}$. Then $\mathrm P\colon \CX_n \to \CP$ is a well-defined functor.
For each $\A\in \CV_n$, let $\A^\flat = \langle A; \wedge, \vee, \mbf_0,
\mbt_0\rangle$ be its bounded-distributive-lattice reduct. Then $\mathrm H(\A^\flat)$ is isomorphic to $\mathrm P(\mathrm D(\A))$.
\end{thm}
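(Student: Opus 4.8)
The plan is to verify the two assertions of Theorem~\ref{thm:trans} in order: first that $\mathrm P$ sends objects of $\CX_n$ to genuine Priestley spaces (and morphisms to continuous order-preserving maps), and then that $\mathrm H(\A^\flat)\cong\mathrm P(\mathrm D(\A))$ for every $\A\in\CV_n$. For the first part I would work through Definition~\ref{def:P(X)} using the properties \ref{B1}--\ref{B6} of $\mathrm F(\X)$ supplied by Lemma~\ref{lem:F(X)}. The underlying space $X\du\widehat X$ is automatically Boolean, being a finite disjoint union of Boolean spaces, so the content is that $\leP$ is a partial order that is topologically compatible and total-order-disconnected. Reflexivity and antisymmetry of $\leP$ on each of the two ``halves'' $X$ and $\widehat X$ are immediate from \ref{B1}; the cross terms in antisymmetry are ruled out because $x\leP\widehat y$ and $\widehat y\leP x$ cannot both hold (inspection of clauses (4),(5),(7) versus the reversed comparisons, using that $g$ fixes $X_0$ pointwise by \ref{B2} and $g(x)=g(y)$ whenever $x\lePY y$ by \ref{B3}). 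Transitivity is the bookkeeping-heavy step: one checks all mixed triples, and the key mechanism is exactly the one flagged after Definition~\ref{def:P(X)}, namely that every comparison between $X\comp X_0$ and $\widehat X\comp\widehat X_0$ factors through $X_0$ or $\widehat X_0$ via $g$, so clause~(7) is forced and consistent. Total order-disconnectedness then follows because $\langle X;\lePY,\Tp\rangle$ is a Priestley space (clopen up-sets separate incomparable points of $X$), its order-dual $\widehat X$ is too, and a pair $x\nleP\widehat y$ can be separated using a clopen up-set of $X$ containing $g(x)$ but not $g(y)$ (when $g(x)\nleq g(y)$ in $X_0$) pulled back along $g$, together with its complement on the $\widehat X$ side; the condition that $g(X)$ is a union of order components (\ref{B4}) is what makes such a set clopen after transport. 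Functoriality of $\mathrm P$ on morphisms is routine: a $\CX_n$-morphism preserves $g$, $\lePY$ and rank, hence the map $\varphi\du\widehat\varphi$ preserves each clause of $\leP$ and is continuous.

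For the second assertion I would construct an explicit isomorphism $\Phi\colon \mathrm P(\mathrm D(\A))\to \mathrm H(\A^\flat)$ in $\CP$. Recall $\mathrm D(\A)=\bigdu_{k}\CV_n(\A,\M_k)$ and $\mathrm H(\A^\flat)=\CCD(\A^\flat,\two)$; the points of the latter are the lattice homomorphisms $\A^\flat\to\two$, equivalently the prime filters of $\A^\flat$. The idea is that each algebra homomorphism $\A\to\M_k$ carries, inside the bilattice $\M_k$, two distinguished ``$\two$-valued pieces'' obtained from the two relevant congruences or from composing with the two lattice homomorphisms $\M_k^\flat\to\two$ determined by the Priestley dual of $\M_k^\flat$ (one ``positive/truth'' quotient and one ``negative'' quotient, which is why the construction doubles $X$ into $X\du\widehat X$). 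Concretely, I would fix, for each $k\in[0,n]$, the (at most two, generically two) elements of $\CCD(\M_k^\flat,\two)$ — call them $\alpha_k$ and $\beta_k$ — and set $\Phi(x)=\alpha_{\rank(x)}\circ x$ for $x\in X_k\subseteq\mathrm D(\A)$ and $\Phi(\widehat x)=\beta_{\rank(x)}\circ x$ for $\widehat x\in\widehat X$, where $x\colon\A\to\M_{\rank(x)}$. One then shows $\Phi$ is a bijection onto $\CCD(\A^\flat,\two)$, order-preserving and order-reflecting with respect to $\leP$, and continuous.

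The heart of the second part — and what I expect to be the main obstacle — is proving that $\Phi$ is a bijection and that it is simultaneously order-preserving \emph{and} order-reflecting, because this is where the precise design of the seven clauses of $\leP$ must be matched against the structure of prime filters of $\A^\flat$. Surjectivity requires showing that every prime filter of $\A^\flat$ arises as $\alpha_k\circ x$ or $\beta_k\circ x$ for some homomorphism $x\colon\A\to\M_k$; this should come from the natural-duality machinery of Theorem~\ref{cor:bigmultiduality} (strongness/fullness gives $\A\cong\mathrm{ED}(\mathrm D(\A))$, so the congruence lattice structure of $\A$ is recoverable from $\mathrm D(\A)$) combined with the known Priestley duals of the finitely many $\M_k^\flat$. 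Injectivity and order-reflection together amount to: $\alpha_j\circ x = $ (or $\leq$) $\,\alpha_k\circ y$ forces $x\lePY y$ in $\mathrm F(\mathrm D(\A))$, and the mixed cases force the $g$-mediated relations of clauses (3)--(7); here the relations $\lez$, $\lek$, $\lejk$ of \eqref{lezetc} and the maps $g_k$ are exactly the ``shadows'' of the lattice structure, so the verification is a finite case analysis over $j,k\in[0,n]$ using how $\alpha_k,\beta_k$ interact with the quotient map $g_k\colon\M_k\to\M_0$. I would organise this by first handling $n=1$ (where, by the remark after Theorem~\ref{thm:dualcategory}, only \ref{A1},\ref{A2},\ref{A6} are in play and $\leP$ has only clauses (1)--(4)) as a base case that already exhibits the doubling phenomenon, then treating general $n$ by the same template with the extra clauses (5)--(7) absorbed exactly by Axioms \ref{A4},\ref{A5}. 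Continuity of $\Phi$ and of $\Phi^{-1}$ is then automatic since both are continuous bijections between compact Hausdorff spaces. Finally, naturality of $\Phi$ in $\A$ (so that $\mathrm H'\cong\mathrm P\circ\mathrm D$ as functors, not merely objectwise) follows by chasing the definition of $\Phi$ through a homomorphism $h\colon\A\to\B$, which commutes with precomposition by the fixed $\alpha_k,\beta_k$ by construction.
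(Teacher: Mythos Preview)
Your map $\Phi$ is exactly the right object---the paper also sends $x\in X_k$ to $\gamma_k\circ x$ and $\widehat x$ to $\delta_k\circ x$, where $\gamma_k,\delta_k\in\CCD(\M_k^\flat,\two)$ are two specific carriers (Figure~\ref{fig:carriers}). But your claim that $\CCD(\M_k^\flat,\two)$ has ``at most two, generically two'' elements is false: for $k\ge 1$ the truth lattice $\M_k^\flat$ is the six-element lattice of Figure~\ref{fig:J1nm}, and its Priestley dual has \emph{four} points, not two. The other two lattice homomorphisms are $\gamma_0\circ g_k$ and $\delta_0\circ g_k$; the reason the construction doubles $X$ rather than quadrupling it is precisely that these two are already accounted for by the point $g_k(x)\in X_0$ and its hat. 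Until you have identified which two of the four to use as $\alpha_k,\beta_k$ and explained why the remaining two are redundant, you cannot argue that $\Phi$ is injective, and the ``finite case analysis over $j,k$'' for order-reflection has no fixed target.

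The deeper gap is surjectivity. Strong duality of $\MT_n$ controls $\CV_n$-homomorphisms out of $\A$, not lattice homomorphisms $\A^\flat\to\two$; it does not by itself say that every such lattice homomorphism factors as $\omega\circ x$ with $x\in\CV_n(\A,\M_k)$ and $\omega\in\{\gamma_k,\delta_k\}$, so your appeal to ``the natural-duality machinery of Theorem~\ref{cor:bigmultiduality}'' is not an argument. The paper avoids this entirely by invoking the Cabrer--Priestley piggyback translation theorem (stated here as Theorem~\ref{thm:HilLeo}): once the separation condition (Sep) of Lemma~\ref{lem:Pigsep} is verified for the chosen carriers, that theorem \emph{gives} the isomorphism $\mathrm H(\A^\flat)\cong Y/{\approx}$ with $Y=\bigcup_k X_k\times\{\gamma_k,\delta_k\}$ and $\leP$ defined through the piggyback relations $\Rsub{\omega_1\omega_2}$. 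The remaining work is then purely finite: compute all the $\Rsub{\omega_1\omega_2}$ using the known lattices $\Sub(\M_j\times\M_k)$ (Table~\ref{table:multpig}), observe that the resulting quasi-order is already antisymmetric so no quotient is needed (Lemma~\ref{lePorder}), and read off that the table entries match clause-for-clause the seven cases of Definition~\ref{def:P(X)}. As a by-product, the paper never verifies directly from \ref{B1}--\ref{B6} that $\mathrm P(\X)$ is a Priestley space; this drops out of the isomorphism with $\mathrm H(\A^\flat)$ together with fullness of the duality (every $\X\in\CX_n$ is some $\mathrm D(\A)$), so your first part, while not incorrect, duplicates work the piggyback route absorbs.
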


Figure~\ref{fig:HFVn1} shows the ordered set 
$\mathrm P(\MT_n) \cong \mathrm P(\mathrm D(\F_{\CV_n}(1))) \cong \mathrm H(\F_{\CV_n}(1)^\flat)$ 
(cf.~Figure~\ref{fig:FofMn}).

\begin{figure}[ht]
\centering
\small
\begin{tikzpicture}[scale=1.25] 

%alpha
\begin{scope}[scale=0.5,xshift=1cm]
  % Elements
  \node[unshaded] (topa) at (0,1.4) {};
  \node[unshaded] (fa) at (-1,2.4) {};
  \node[unshaded] (ta) at (1,2.4) {};
  \node[unshaded] (bota) at (0,3.4) {};
  % Order
  \draw[order] (topa) -- (fa) -- (bota);
  \draw[order] (topa) -- (ta) -- (bota);

  % Labels
  \node[label,anchor=west,xshift=4pt,yshift=1pt] at (ta) {$\widehat{\mbt^0}$};
  \node[label,anchor=east,yshift=-3pt,xshift=2pt] at (topa) {$\widehat{\top^0}$};
  \node[label,anchor=east,yshift=2pt,xshift=2pt] at (bota) {$\widehat{\bot^0}$};
  \node[label,anchor=east] at (fa) {$\widehat{\mbf^0}$};
\end{scope}

%beta
\begin{scope}[scale=0.5, xshift=6cm]
  % Elements
  \node[unshaded] (botb) at (0,1.4) {};
  \node[unshaded] (fb) at (-1,2.4) {};
  \node[unshaded] (tb) at (1,2.4) {};
  \node[unshaded] (topb) at (0,3.4) {};
  % Order
  \draw[order] (botb) -- (fb) -- (topb);
  \draw[order] (botb) -- (tb) -- (topb);

  % Labels
  \node[label,anchor=west] at (tb) {$\mbt^{0}$};
  \node[label,anchor=west,yshift=-4pt,xshift=-4pt] at (botb) {$\bot^0$};
  \node[label,anchor=west,yshift=2pt,xshift=-4pt] at (topb) {$\top^0$};
  \node[label,anchor=east,xshift=-1pt,yshift=-1pt] at (fb) {$\mbf^{0}$};
\end{scope}

%topgammas
\begin{scope}[scale=0.5, xshift=3cm, yshift=-4cm]

  \node[unshaded] (topg1) at (0,1){};
  \node[unshaded] (topg2) at (0,-1.5){};
  \node[unshaded] (topg3) at (0,-0.5){};

  % Order
  \node at ($0.35*(topg1)+0.65*(topg3)$) {$\cdot$};
  \node at ($0.5*(topg1)+0.5*(topg3)$) {$\cdot$};
  \node at ($0.65*(topg1)+0.35*(topg3)$) {$\cdot$};
  \draw (topg1) -- ($0.8*(topg1)+0.2*(topg3)$);
  \draw (topg3) -- ($0.2*(topg1)+0.8*(topg3)$);
  \draw[order] (topg1) -- (topa);
  \draw[order] (topg1) -- (topb);
  \draw[order] (topg2) -- (topg3);

  % Labels
  \node[label,anchor=east] at (topg1) {$\top^{n}$};
  \node[label,anchor=east] at (topg2) {$\top^1$};
  \node[label,anchor=east] at (topg3) {$\top^2$};
\end{scope}

%botgamma
\begin{scope}[scale=0.5, xshift=4cm, yshift=-4cm]

  \node[unshaded] (botg1) at (0,1){};
  \node[unshaded] (botg2) at (0,-1.5){};
  \node[unshaded] (botg3) at (0,-0.5){};

  % Order
  \node at ($0.35*(botg1)+0.65*(botg3)$) {$\cdot$};
  \node at ($0.5*(botg1)+0.5*(botg3)$) {$\cdot$};
  \node at ($0.65*(botg1)+0.35*(botg3)$) {$\cdot$};
  \draw (botg1) -- ($0.8*(botg1)+0.2*(botg3)$);
  \draw (botg3) -- ($0.2*(botg1)+0.8*(botg3)$);
  \draw[order] (botg1) -- (botb);
  \draw[order] (botg1) -- (bota);
  \draw[order] (botg2) -- (botg3);

  % Labels
  \node[label,anchor=west] at (botg1) {$\bot^{n}$};
  \node[label,anchor=west] at (botg2) {$\bot^1$};
  \node[label,anchor=west] at (botg3) {$\bot^2$};
\end{scope}

%botdeltas
\begin{scope}[scale=0.5, xshift=3cm, yshift=8cm]

  \node[unshaded] (botd1) at (0,0) {};
  \node[unshaded] (botd2) at (0,2.5){};
  \node[unshaded] (botd3) at (0,1.5){};

  % Order
  \node at ($0.35*(botd1)+0.65*(botd3)$) {$\cdot$};
  \node at ($0.5*(botd1)+0.5*(botd3)$) {$\cdot$};
  \node at ($0.65*(botd1)+0.35*(botd3)$) {$\cdot$};
  \draw (botd1) -- ($0.8*(botd1)+0.2*(botd3)$);
  \draw (botd3) -- ($0.2*(botd1)+0.8*(botd3)$);
 \draw[order] (botd1) -- (bota);
  \draw[order] (botd1) -- (botb);
  \draw[order] (botd2) -- (botd3);

  % Labels
  \node[label,anchor=east] at (botd1) {$\widehat{\bot^{n}}$};
  \node[label,anchor=east] at (botd2) {$\widehat{\bot^1}$};
  \node[label,anchor=east] at (botd3) {$\widehat{\bot^2}$};
\end{scope}

%topdeltas
\begin{scope}[scale=0.5, xshift=4cm, yshift=8cm]

  \node[unshaded] (topd1) at (0,0) {};
  \node[unshaded] (topd2) at (0,2.5){};
  \node[unshaded] (topd3) at (0,1.5){};

  % Order
  \node at ($0.35*(topd1)+0.65*(topd3)$) {$\cdot$};
  \node at ($0.5*(topd1)+0.5*(topd3)$) {$\cdot$};
  \node at ($0.65*(topd1)+0.35*(topd3)$) {$\cdot$};
  \draw (topd1) -- ($0.8*(topd1)+0.2*(topd3)$);
  \draw (topd3) -- ($0.2*(topd1)+0.8*(topd3)$);
  \draw[order] (topd1) -- (topa);
  \draw[order] (topd1) -- (topb);
  \draw[order] (topd2) -- (topd3);

  % Labels
  \node[label,anchor=west] at (topd1) {$\widehat{\top^{n}}$};
  \node[label,anchor=west] at (topd2) {$\widehat{\top^1}$};
  \node[label,anchor=west] at (topd3) {$\widehat{\top^2}$};
\end{scope}

%0fgamma
\begin{scope}[scale=0.5, xshift=-1cm, yshift=-3cm]

  \node[unshaded] (fg1) at (0,0) {};
  \node[unshaded] (0g1) at (0,1){};
  \node[unshaded] (0gz) at (-2.5,-1.5) {};
  \node[unshaded] (fgz) at (-2.5,-2.5){};
  \node[unshaded] (0gzz) at (-1.5,-0.5) {};
  \node[unshaded] (fgzz) at (-1.5,-1.5){};

  % Order
  \draw[order] (fg1) -- (0g1);
  \draw[order] (fgz) -- (0gz);
  \draw[order] (fgz) -- (fgzz);
  \draw[order] (0gz) -- (0gzz);
  \draw[order] (fgzz) -- (0gzz);
  \node at ($0.35*(fg1)+0.65*(fgzz)$) {$\cdot$};
  \node at ($0.5*(fg1)+0.5*(fgzz)$) {$\cdot$};
  \node at ($0.65*(fg1)+0.35*(fgzz)$) {$\cdot$};
  \node at ($0.35*(0g1)+0.65*(0gzz)$) {$\cdot$};
  \node at ($0.5*(0g1)+0.5*(0gzz)$) {$\cdot$};
  \node at ($0.65*(0g1)+0.35*(0gzz)$) {$\cdot$};
  \draw (fg1) -- ($0.8*(fg1)+0.2*(fgzz)$);
  \draw (0g1) -- ($0.8*(0g1)+0.2*(0gzz)$);
  \draw (fgzz) -- ($0.2*(fg1)+0.8*(fgzz)$);
  \draw (0gzz) -- ($0.2*(0g1)+0.8*(0gzz)$);
  \draw[order] (0g1) -- (fa);
  \draw[order] (0g1) -- (fb);

  % Labels
  \node[label,anchor=east] at (0g1) {$\zero^{n}$};
  \node[label,anchor=west] at (fg1) {$\mbf^{n}$};
  \node[label,anchor=east] at (0gz) {$\zero^1$};
  \node[label,anchor=west] at (fgz) {$\mbf^1$};
  \node[label,anchor=east] at (0gzz) {$\zero^2$};
  \node[label,anchor=west] at (fgzz) {$\mbf^2$};

  \end{scope}

  %1tgamma
\begin{scope}[scale=0.5, xshift=8cm, yshift=-3cm]

  \node[unshaded] (tg1) at (0,0) {};
  \node[unshaded] (1g1) at (0,1){};
  \node[unshaded] (1gz) at (2.5,-1.5) {};
  \node[unshaded] (tgz) at (2.5,-2.5){};
  \node[unshaded] (1gzz) at (1.5,-0.5) {};
  \node[unshaded] (tgzz) at (1.5,-1.5){};

  % Order
  \draw[order] (tg1) -- (1g1);
  \draw[order] (1g1) -- (ta);
  \draw[order] (1g1) -- (tb);
  \draw[order] (1gz) -- (1gzz);
  \draw[order] (tgz) -- (tgzz);
  \draw[order] (tgzz) -- (1gzz);
  \node at ($0.35*(tg1)+0.65*(tgzz)$) {$\cdot$};
  \node at ($0.5*(tg1)+0.5*(tgzz)$) {$\cdot$};
  \node at ($0.65*(tg1)+0.35*(tgzz)$) {$\cdot$};
  \node at ($0.35*(1g1)+0.65*(1gzz)$) {$\cdot$};
  \node at ($0.5*(1g1)+0.5*(1gzz)$) {$\cdot$};
  \node at ($0.65*(1g1)+0.35*(1gzz)$) {$\cdot$};
  \draw (tg1) -- ($0.8*(tg1)+0.2*(tgzz)$);
  \draw (1g1) -- ($0.8*(1g1)+0.2*(1gzz)$);
  \draw (tgzz) -- ($0.2*(tg1)+0.8*(tgzz)$);
  \draw (1gzz) -- ($0.2*(1g1)+0.8*(1gzz)$);
  \draw (tg1) -- ($0.8*(tg1)+0.2*(tgzz)$);
  \draw (1g1) -- ($0.8*(1g1)+0.2*(1gzz)$);
  \draw (tgzz) -- ($0.2*(tg1)+0.8*(tgzz)$);
  \draw (1gzz) -- ($0.2*(1g1)+0.8*(1gzz)$);

  % Labels
  \node[label,anchor=west] at (1g1) {$\one^{n}$};
  \node[label,anchor=east] at (tg1) {$\mbt^{n}$};
  \node[label,anchor=west] at (1gz) {$\one^1$};
  \node[label,anchor=east] at (tgz) {$\mbt^1$};
  \node[label,anchor=west] at (1gzz) {$\one^2$};
  \node[label,anchor=east] at (tgzz) {$\mbt^2$};

  \end{scope}

%0fdelta
\begin{scope}[scale=0.5, xshift=-1cm, yshift=7cm]

  \node[unshaded] (0d1) at (0,0) {};
  \node[unshaded] (fd1) at (0,1){};
  \node[unshaded] (fd2) at (-2.5,3.5){};
  \node[unshaded] (0d2) at (-2.5,2.5){};
  \node[unshaded] (fd22) at (-1.5,2.5){};
  \node[unshaded] (0d22) at (-1.5,1.5){};

  % Order
  \draw[order] (fd1) -- (0d1);
  \draw[order] (tgz) -- (1gz);
  \draw[order] (fd2) -- (fd22);
  \draw[order] (0d2) -- (0d22);
  \draw[order] (fd22) -- (0d22);
  \node at ($0.35*(fd1)+0.65*(fd22)$) {$\cdot$};
  \node at ($0.5*(fd1)+0.5*(fd22)$) {$\cdot$};
  \node at ($0.65*(fd1)+0.35*(fd22)$) {$\cdot$};
  \node at ($0.35*(0d1)+0.65*(0d22)$) {$\cdot$};
  \node at ($0.5*(0d1)+0.5*(0d22)$) {$\cdot$};
  \node at ($0.65*(0d1)+0.35*(0d22)$) {$\cdot$};
  \draw (fd1) -- ($0.8*(fd1)+0.2*(fd22)$);
  \draw (0d1) -- ($0.8*(0d1)+0.2*(0d22)$);
  \draw (fd22) -- ($0.2*(fd1)+0.8*(fd22)$);
  \draw (0d22) -- ($0.2*(0d1)+0.8*(0d22)$);
  \draw[order] (0d1) -- (fa);
  \draw[order] (0d1) -- (fb);

  % Labels
  \node[label,anchor=east] at (0d1) {$\widehat{\zero^{n}}$};
  \node[label,anchor=west] at (fd1) {$\widehat{\mbf^{n}}$};
  \node[label,anchor=east] at (0d2) {$\widehat {\zero^1}$};
  \node[label,anchor=west,xshift=3pt,yshift=1pt] at (fd2) {$\widehat{\mbf^1}$};
  \node[label,anchor=east] at (0d22) {$\widehat {\zero^2}$};
  \node[label,anchor=west,xshift=3pt,yshift=1pt] at (fd22) {$\widehat{\mbf^2}$};

  \end{scope}

%1tdelta
\begin{scope}[scale=0.5, xshift=8cm, yshift=7cm]

  \node[unshaded] (1d1) at (0,0) {};
  \node[unshaded] (td1) at (0,1){};
  \node[unshaded] (1dz) at (2.5,2.5){};
  \node[unshaded] (tdz) at (2.5,3.5){};
  \node[unshaded] (1dzz) at (1.5,1.5){};
  \node[unshaded] (tdzz) at (1.5,2.5){};

  % Order
  \draw[order] (td1) -- (1d1);
  \draw[order] (0d2) -- (fd2);
  \draw[order] (1dz) -- (tdz);
  \draw[order] (1dz) -- (1dzz);
  \draw[order] (tdz) -- (tdzz);
  \draw[order] (tdzz) -- (1dzz);
  \node at ($0.35*(td1)+0.65*(tdzz)$) {$\cdot$};
  \node at ($0.5*(td1)+0.5*(tdzz)$) {$\cdot$};
  \node at ($0.65*(td1)+0.35*(tdzz)$) {$\cdot$};
  \node at ($0.35*(1d1)+0.65*(1dzz)$) {$\cdot$};
  \node at ($0.5*(1d1)+0.5*(1dzz)$) {$\cdot$};
  \node at ($0.65*(1d1)+0.35*(1dzz)$) {$\cdot$};
  \draw (td1) -- ($0.8*(td1)+0.2*(tdzz)$);
  \draw (1d1) -- ($0.8*(1d1)+0.2*(1dzz)$);
  \draw (tdzz) -- ($0.2*(td1)+0.8*(tdzz)$);
  \draw (1dzz) -- ($0.2*(1d1)+0.8*(1dzz)$);
  \draw (td1) -- ($0.8*(td1)+0.2*(tdzz)$);
  \draw (1d1) -- ($0.8*(1d1)+0.2*(1dzz)$);
  \draw (tdzz) -- ($0.2*(td1)+0.8*(tdzz)$);
  \draw (1dzz) -- ($0.2*(1d1)+0.8*(1dzz)$);
  \draw[order] (1d1) -- (ta);
  \draw[order] (1d1) -- (tb);

  % Labels
  \node[label,anchor=west] at (1d1) {$\widehat{\one^{n}}$};
  \node[label,anchor=east] at (td1) {$\widehat{\mbt^{n}}$};
  \node[label,anchor=west] at (1dz) {$\widehat{\one^1}$};
  \node[label,anchor=east] at (tdz) {$\widehat{\mbt^1}$};
  \node[label,anchor=west] at (1dzz) {$\widehat{\one^2}$};
  \node[label,anchor=east] at (tdzz) {$\widehat{\mbt^2}$};

  \end{scope}

\end{tikzpicture}
\caption{The ordered set $\mathrm P(\MT_n)  
\cong
\mathrm H(\F_{\CV_n}(1)^\flat)$.}\label{fig:HFVn1}
\end{figure}

As an application of Theorem~\ref{thm:trans}, we will calculate the cardinality of the free algebra $\F_{\CV_n}(1)$, for all $n\in \omega\comp\{0\}$.
Throughout the calculation we will use the arithmetic of down-sets without specific reference (see Proposition~1.31, Exercise~1.18, Lemma~5.18 and Exercise~5.19 in Davey and Priestley~\cite{ILO}).

\begin{thm}\label{thm:sizefree}
Let $n\in \omega\comp\{0\}$. Then
\[
|F_{\CV_n}(1)| = \tfrac12\big(n^6 + 10n^5 + 42n^4 + 102n^3 + 157n^2 + 148n + 72\big).
\]
\end{thm}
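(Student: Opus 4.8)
The plan is to use Theorem~\ref{thm:trans} together with finite Priestley duality to reduce the computation to counting down-sets of a single explicit finite ordered set, and then to perform that count. Recall from Section~\ref{sec:CVduality} that $\mathrm D(\F_{\CV_n}(S)) \cong \MT_n^S$ for every non-empty~$S$; in particular $\mathrm D(\F_{\CV_n}(1)) \cong \MT_n$. By Theorem~\ref{thm:trans}, $\mathrm H(\A^\flat) \cong \mathrm P(\mathrm D(\A))$ for every $\A\in\CV_n$, so taking $\A = \F_{\CV_n}(1)$ gives $\mathrm H\big(\F_{\CV_n}(1)^\flat\big) \cong \mathrm P(\MT_n)$. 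Since $\F_{\CV_n}(1)$ and its bounded-distributive-lattice reduct $\F_{\CV_n}(1)^\flat$ have the same underlying set, and since $\mathrm P(\MT_n)$ is finite --- it has $2(4 + 6n) = 12n + 8$ elements, as $|M_0| = 4$ and $|M_k| = 6$ for $k\in[1,n]$ --- finite Priestley duality tells us that $|\F_{\CV_n}(1)| = |\F_{\CV_n}(1)^\flat|$ is exactly the number of down-sets of the ordered set underlying $\mathrm P(\MT_n)$. The theorem is thereby reduced to a purely combinatorial count.

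To carry out that count I would first read off the shape of $\mathrm P(\MT_n)$ from Definition~\ref{def:P(X)} and Figures~\ref{fig:FofMn} and~\ref{fig:HFVn1}: a fixed ``core'' $X_0\du\widehat X_0$ consisting of two four-element diamonds (the knowledge diamond of~$\M_0$ and its dual, with no order relations between the two halves), together with, for each $k\in[1,n]$, the blocks $M_k$ and $\widehat M_k$. Inside $X\comp X_0$ the between-sort relations $\lejk$ glue the $\top$-points into an $n$-chain, the $\bot$-points into an $n$-chain, the $\mbf$/$\zero$-points into a copy of the product of an $n$-element chain with a $2$-element chain, and the $\mbt$/$\one$-points into a second copy of that ladder; $\widehat X\comp\widehat X_0$ mirrors $X\comp X_0$ order-dually ``above'' the core, and the attachment of all these blocks to the core is dictated by the retraction $g$ through clauses~(3)--(7) of Definition~\ref{def:P(X)}. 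I would then enumerate down-sets $D$ by first fixing the trace $D\cap(X_0\du\widehat X_0)$ on the core --- there are $6\times 6$ possibilities, independent of~$n$ --- and, for each trace, counting the ways to extend $D$ over the towers lying below and above the core. Using the arithmetic of down-sets~\cite{ILO} (disjoint unions multiply down-set counts; linear sums add them, less one), each extension count is a polynomial in~$n$; summing over the finitely many core-traces gives the total as a polynomial in~$n$, tracking degrees shows it has degree~$6$, and collecting the coefficients yields $\tfrac12\big(n^6 + 10n^5 + 42n^4 + 102n^3 + 157n^2 + 148n + 72\big)$. As a check, at $n = 1$ this evaluates to~$266$, which can be confirmed directly from the $20$-element ordered set $\mathrm P(\MT_1)$.

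The main obstacle is the bookkeeping in this last step. The ordered set $\mathrm P(\MT_n)$ is glued intricately: the towers are not independent, since they all hang off the shared core, and membership of the various core elements in a down-set propagates downwards into the $n$ blocks $M_k$ and --- through clause~(7), which links a block to its hatted copy exactly when their $g$-images are comparable --- into the $n$ blocks $\widehat M_k$ as well, in several different patterns. (For instance, $\widehat{\bot^0}\in D$ forces all of $X\comp X_0$ into $D$, whereas $\bot^0\in D$ forces nothing new.) So the extension counts do not factor completely, and one must sort the $36$ core-traces into finitely many classes on which the extension count is uniform, evaluate the per-class polynomials, and then assemble and simplify a moderately large sum of polynomials without arithmetic slips. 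No single step is deep; the work lies in organising the computation so that the degree-$6$ cancellations and the exact coefficients come out cleanly.
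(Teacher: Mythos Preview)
Your proposal is correct and takes essentially the same approach as the paper: both reduce via Theorem~\ref{thm:trans} and finite Priestley duality to counting down-sets of $\mathrm P(\MT_n)$, then case-split on the intersection of a down-set with a fixed $n$-independent piece of the poset and use down-set arithmetic to obtain a polynomial in~$n$. The only organisational difference is that, instead of splitting uniformly on the $36$ traces $D\cap(X_0\du\widehat X_0)$, the paper first separates into the cases $D\cap T=\varnothing$ (handled exactly as you propose, via $D\cap C$) and $D\cap T\ne\varnothing$ (handled by fixing which of the four minimal elements of~$T$ lie in~$D$), which streamlines the bookkeeping for the ``above the core'' contributions.
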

\begin{proof}
Since $\F_{\CV_n}(1)^\flat \cong \mathrm K\mathrm H(\F_{\CV_n}(1)^\flat) \cong \mathrm K(\mathrm P(\MT_n))$
is isomorphic to the lattice $\boldsymbol{\mathcal O}(\mathrm P(\MT_n))$ of down-sets of $\mathrm P(\MT_n)$, we need to count the number of down-sets of the ordered set shown in Figure~\ref{fig:HFVn1}. Divide $\mathrm P(\MT_n)$ into three subsets:
\begin{itemize}[itemsep=0pt] 

\item the bottom $\B$, isomorphic to $(\two \times \mathbf n) \du \mathbf n \du \mathbf n \du (\two \times \mathbf n)$,

\item the centre $\C$, isomorphic to $\two^2 \du \two^2$, and

\item the top $\T$, also isomorphic to $(\two \times \mathbf n) \du \mathbf n \du \mathbf n \du (\two \times \mathbf n)$.

\end{itemize}
We shall count the down-sets of $\mathrm P(\MT_n)$ by first counting the number that do not intersect the top $T$ and then counting the number that do.

Since the ordered set $\two \times \mathbf n$ occurs four times within $\mathrm P(\MT_n)$, it will be useful to know that $|\mathcal O(\two \times \mathbf n)| = \frac12(n +1)(n + 2)$. This is an easy calculation; indeed, $\boldsymbol{\mathcal O}(\two \times \mathbf n)$, the coproduct in $\CCD$ of $\mathbf 3$ and $\mathbf{n \boldsymbol+ 1}$, is as shown in Figure~\ref{fig:O(2timesn)}.

\begin{figure}[ht]
\centering
\begin{tikzpicture}[scale=0.5]
  \begin{scope}
    % Elements
    \node[shaded] (0) at (0,-1.4) {};
    \node[unshaded] (v1) at (0,0) {};
    \node[shaded] (b1) at (1,1) {};
    \node[unshaded] (w1) at (-1,1) {};
    \node[unshaded] (w2) at (0,2) {};
    \node[shaded] (b2) at (1,3) {};
    \node[unshaded] (x1) at (-2,2) {};
    \node[unshaded] (x2) at (-1,3) {};
    \node[unshaded] (x3) at (0,4) {};
    \node[shaded] (bnm2) at (1,5) {};
    \node[unshaded] (y1) at (-3,3) {};
    \node[unshaded] (y2) at (-2,4) {};
    \node[unshaded] (y3) at (-1,5) {};
    \node[unshaded] (y4) at (0,6) {};
    \node[shaded] (bnm1) at (1,7) {};
    \node[shaded] (a) at (-4,4) {};
    \node[unshaded] (z2) at (-3,5) {};
    \node[unshaded] (z3) at (-2,6) {};
    \node[unshaded] (z4) at (-1,7) {};
    \node[unshaded] (z5) at (0,8) {};
    \node[shaded] (1) at (0,9.4) {};
    % Order
    \draw[order] (0) -- (v1) -- (b1);
    \draw[order] (w1) -- (w2) -- (b2);
    \draw[order] (x1) -- (x2) -- (x3) -- ($(x3)+(6.5pt,6.5pt)$);
    \node at ($0.35*(x3)+0.65*(bnm2)$) {$\cdot$};
    \node at ($0.5*(x3)+0.5*(bnm2)$) {$\cdot$};
    \node at ($0.65*(x3)+0.35*(bnm2)$) {$\cdot$};
    \draw[order] ($(bnm2)+(-6.5pt,-6.5pt)$) -- (bnm2);
    \draw[order] (y1) -- (y2) -- (y3) -- ($(y3)+(6.5pt,6.5pt)$);
    \node at ($0.35*(y3)+0.65*(y4)$) {$\cdot$};
    \node at ($0.5*(y3)+0.5*(y4)$) {$\cdot$};
    \node at ($0.65*(y3)+0.35*(y4)$) {$\cdot$};
    \draw[order] ($(y4)+(-6.5pt,-6.5pt)$) -- (y4) -- (bnm1);
    \draw[order] (a) -- (z2) -- (z3) -- ($(z3)+(6.5pt,6.5pt)$);
    \node at ($0.35*(z3)+0.65*(z4)$) {$\cdot$};
    \node at ($0.5*(z3)+0.5*(z4)$) {$\cdot$};
    \node at ($0.65*(z3)+0.35*(z4)$) {$\cdot$};
    \draw[order] ($(z4)+(-6.5pt,-6.5pt)$) -- (z4) -- (z5) -- (1);
    \draw[order] (v1) -- (w1) -- ($(w1)+(-6.5pt,6.5pt)$);
    \node at ($0.35*(x1)+0.65*(w1)$) {$\cdot$};
    \node at ($0.5*(x1)+0.5*(w1)$) {$\cdot$};
    \node at ($0.65*(x1)+0.35*(w1)$) {$\cdot$};
    \draw[order] ($(x1)+(6.5pt,-6.5pt)$) -- (x1) -- (y1) -- (a);
    \draw[order] (b1) -- (w2) -- ($(w2)+(-6.5pt,6.5pt)$);
    \node at ($0.35*(x2)+0.65*(w2)$) {$\cdot$};
    \node at ($0.5*(x2)+0.5*(w2)$) {$\cdot$};
    \node at ($0.65*(x2)+0.35*(w2)$) {$\cdot$};
    \draw[order] ($(x2)+(6.5pt,-6.5pt)$) --  (x2) -- (y2) -- (z2);
    \draw[order] (b2) -- ($(b2)+(-6.5pt,6.5pt)$);
    \node at ($0.35*(x3)+0.65*(b2)$) {$\cdot$};
    \node at ($0.5*(x3)+0.5*(b2)$) {$\cdot$};
    \node at ($0.65*(x3)+0.35*(b2)$) {$\cdot$};
    \draw[order] ($(x3)+(6.5pt,-6.5pt)$) -- (x3) -- (y3) -- (z3);
    \draw[order] (bnm2) -- (y4) -- (z4);
    \draw[order] (bnm1) -- (z5);
    % Extra dots
    \node at ($0.39*(b2)+0.61*(bnm2)$) {$\vdots$};
    % Labels
    \node[label,anchor=west] at (0) {$0$};
    \node[label,anchor=west] at (b1) {$b_1$};
    \node[label,anchor=west] at (b2) {$b_2$};
    \node[label,anchor=west] at (bnm2) {$b_{n-2}$};
    \node[label,anchor=west] at (bnm1) {$b_{n-1}$};
    \node[label,anchor=west] at (1) {$1$};
    \node[label,anchor=east] at (a) {$a$};
    \node[label,anchor=west] at (4,4) {$\phantom{a}$};
  \end{scope}
\end{tikzpicture}
\caption{The lattice $\boldsymbol{\mathcal O}(\two \times \mathbf n)$}\label{fig:O(2timesn)}
\end{figure}

\begin{claim}
The number of down-sets of $\mathrm P(\MT_n)$ that do not intersect the top $T$ is
\[
f(n) = \tfrac14\big(n^6 + 10n^5 + 41n^4 + 96n^3 + 148n^2 + 148n + 144\big).
\]
\end{claim}
\begin{proof}[Proof of Claim 1]
Let $U$ be a down-set of $\mathrm P(\MT_n)$ that does not intersect~$T$. The intersection $U\cap C$ is one of the 36 down-sets of $\C$. The number of such $U$ for a given intersection $U \cap C$ is given in Table~\ref{tab:f(n)}. (To save space each superscript $0$ has been omitted in the first column of the table.) We will explain how the numbers are obtained in three typical cases.

\begin{table}[htp]
\caption{Calculating $f(n)$}
\begin{center}
\renewcommand{\arraystretch}{1.25}
\begin{tabular}{|l|l|}
\hline
$U\cap C$ &  \# of such $U$\\\hline\hline
$\varnothing$  & $\frac14(n+1)^4(n+2)^2$\\\hline
$\{\bot\}$ or $\{\widehat \top\}$  & \strut$\frac14(n+1)^3(n+2)^2$\\\hline
$\{\bot, \mbf\}$ or $\{\bot, \mbt\}$ or $\{\widehat\top, \widehat\mbf\,\}$ or $\{\widehat\top, \widehat\mbt\,\}$  &$\frac12(n+1)^2(n+2)$\\\hline
$\{\bot, \mbf, \mbt\}$ or $\{\widehat\top, \widehat\mbf, \widehat\mbt\,\}$  &$n+1$\\\hline
$\{\bot,\widehat \top\}$  & $\frac14(n+1)^2(n+2)^2$\\\hline
$\{\bot, \widehat\top, \widehat\mbf\,\}$ or $\{\bot, \widehat\top, \widehat\mbt\,\}$ or $\{\bot, \mbf, \widehat\top\,\}$  & \\
or $\{\bot, \mbt,\widehat\top\,\} $ or $\{\bot, \mbf, \widehat\top, \widehat\mbf\,\}$ or $\{\bot, \mbt,\widehat\top, \widehat\mbt\,\}$  & $\frac12(n+1)(n+2)$\\\hline
each of the remaining 20 possibilities&  1\\\hline
\end{tabular}
\end{center}
\label{tab:f(n)}
\end{table}%

\smallskip
\noindent\textbf{Case 1:} $U \cap C = \varnothing$.
In this case $U$ is a down-set of $\B \cong  (\two \times \mathbf n) \du \mathbf n \du \mathbf n \du (\two \times \mathbf n)$. Since
\[
\boldsymbol{\mathcal O}(\B) \cong \boldsymbol{\mathcal O}(\two \times \mathbf n)^2 \times (\mathbf{n \boldsymbol+ 1})^2,
\]
we conclude that the number of such down-sets is
\[
|\mathcal O(\B)| = \big(\tfrac12(n +1)(n + 2)\big)^2 \times (n+1)^2 = \tfrac14(n+1)^4(n+2)^2.
\]

\smallskip
\noindent\textbf{Case 2:} $U \cap C = \{\bot^0\}$. As $U$ is a down-set containing  $\bot^0$, it must also contain~$\down\bot^n$. Hence $U\comp\down\bot^n$ is an arbitrary down-set of $B' := B\comp\down\bot^n$. The ordered set $\B'$ is isomorphic to $(\two \times \mathbf n) \du \mathbf n \du (\two \times \mathbf n)$. Thus there are
\[
|\mathcal O(\B')| = \big(\tfrac12(n +1)(n + 2)\big)^2 \times (n+1) = \tfrac14(n+1)^3(n+2)^2
\]
choices for $U\comp \down \bot^n$ and hence for~$U$.

\goodbreak 

\smallskip
\noindent\textbf{Case 3:} $U \cap C = \{\bot^0, \mbf^0\}$. In this case, the down-set $U$ must contain $\down\bot^n$, as well as $\down\zero^n$. The only possibilities for the remainder of $U$ are the down-sets of $\down\top^n\cup \down \one^n$. Since $\down\top^n$ and $\down \one^n$ are disjoint, there are
\begin{equation*}
|\mathcal O(\down\top^n\cup \down \one^n)| = |\mathcal O(\down\top^n) \times \mathcal O(\down \one^n)|= \tfrac12(n +1)(n + 2) \times (n+1) = \tfrac12(n+1)^2(n+2)
\end{equation*}
such possibilities.

\smallskip

The other cases are argued similarly. Multiplying each term in the second column of Table~\ref{tab:f(n)} by the number of sets in the corresponding cell in the
first column and summing gives
\begin{multline*}
f(n) = \tfrac14(n+1)^4(n+2)^2 + 2\cdot \tfrac14(n+1)^3(n+2)^2\\ + 4\cdot \tfrac12(n+1)^2(n+2) + 2\cdot(n+1) + \tfrac14(n+1)^2(n+2)^2\\
+ 6\cdot \tfrac12(n+1)(n+2) + 20\cdot 1
\end{multline*}
which, after simplification, gives the formula in the claim.
\renewcommand{\qedsymbol}{$\diamond$}
\end{proof}

\begin{claim}
The number of down-sets of $\mathrm P(\MT_n)$ that intersect the top $T$ is
\[
g(n) = \tfrac14\big(n^6 + 10n^5 + 43n^4 + 108n^3 + 166n^2 + 148n\big).
\]
\end{claim}
\begin{proof}[Proof of Claim 2]
Let $U$ be a down-set of $\mathrm P(\MT_n)$ that intersects~$T$. The intersection of $U$ with the set $\min(T) = \{\widehat{\zero^n}, \widehat{\bot^n}, \widehat{\top^n}, \widehat{\one^n}\}$ of minimal elements of $T$ is one of the 15 non-empty subsets of $\min(T)$. The number of such $U$ for a given intersection $U \cap \min(T)$ is given in Table~\ref{tab:g(n)}. (Once again, to save space each superscript $n$ has been omitted in the first column of the table.) We now explain how these numbers are obtained.

\begin{table}[htp]
\caption{Calculating $g(n)$}
\begin{center}
\renewcommand{\arraystretch}{1.25}
\begin{tabular}{|l|l|}
\hline
$U \cap \min(T)$ &  \# of such $U$\\\hline\hline
$\{\widehat\zero\}$ or $\{\widehat\one\}$&  $\big(\frac12(n+1)(n+2) - 1\big)\big(\frac12(n+1)(n+2) +8\big)$\\\hline
$\{\widehat\bot\}$ or $\{\widehat \top\}$ &  $5n$\\\hline
$\{\widehat\zero, \widehat\one\}$ &  $4\big(\frac12(n+1)(n+2) - 1\big)^2$\\\hline
$\{\widehat\zero, \widehat\top\}$ or $\{\widehat\zero, \widehat\bot\}$  & \\
\quad or $\{\widehat\top, \widehat\one\}$ or $\{\widehat\bot, \widehat\one\}$  &$3n\big(\frac12(n+1)(n+2) - 1\big)$\\\hline
$\{\widehat\top, \widehat\bot\}$ &  $n^2$ \\\hline
$\{\widehat\bot, \widehat\top, \widehat\one\}$ or $\{\widehat\zero, \widehat\top, \widehat\bot\}$ & $n^2\big(\frac12(n+1)(n+2) - 1\big)$\\\hline
$\{\widehat\zero, \widehat\top,\widehat\one\}$ or $\{\widehat\zero, \widehat\bot,\widehat\one\}$ &  $2n\big(\frac12(n+1)(n+2) - 1\big)^2$\\\hline 
$\{\widehat\zero, \widehat\bot, \widehat\top,\widehat\one\}$ &  $n^2\big(\frac12(n+1)(n+2) - 1\big)^2$\\\hline
\end{tabular}
\end{center}
\label{tab:g(n)}
\end{table}%

The down-set $U$ must contain $\bigcup \{\down a \mid a \in  U \cap \min(T)\}$.  The only possibility for the remainder of $U$ is a set of the form
\[
\bigcup\{V_a \mid a \in U \cap \min(T)\} \cup W,\tag{$*$}
\]
where $V_a$ is a non-empty down-set of $\up a$ and $W$ is a down-set of the ordered set $\P$ with underlying set
\[
P:=(C \cup B)\comp \big(\bigcup \{\down a \mid a \in  U \cap \min(T)\}\big).
\]
Since the union in $(*)$ is disjoint, the number of possibilities for $U$ is
\[
\prod\{|\mathcal O(\up a)| - 1 : a \in  U \cap \min(T)\} \times |\mathcal O(\P)|.\tag{$**$}
\]
If $a\in \{\widehat{\zero^n}, \widehat{\one^n}\}$, then $\up a \cong \two \times \mathbf n$ whence $|\mathcal O(\up a)| = \tfrac12(n+1)(n+2)$, and if $a \in \{\widehat{\bot^n}, \widehat{\top^n}\}$, then $\up a \cong \mathbf n$ whence $|\mathcal O(\up a)| = n+1$. In each case, it remains to calculate $P$ and $|\mathcal O(\P)|$. We present the details in three of the 15 cases. 
(We denote the \emph{linear sum} of ordered sets $\P$ and $\Q$ by $\P \oplus \Q$.)

\smallskip
\noindent\textbf{Case 1:} $U \cap \min(T) = \{\widehat{\zero^n}\}$. In this case
\[
P = (C\cup B)\comp \down\widehat{\zero^n} =  \down\one^n \cup \{\widehat\bot^0, \widehat{\mbt^0}\} \cup \{\top^0, \mbt^0\}.
\]
Since $\P \cong (\two \times \mathbf n) \oplus (\two \du \two)$, the product $(**)$ becomes
\begin{align*}
&\Big(|\mathcal O(\two \times \mathbf n)| - 1\Big) \times |\mathcal O\big((\two \times \mathbf n) \oplus (\two \du \two)\big)|\\
 = {}&\Big(|\mathcal O(\two \times \mathbf n)| - 1\Big) \times \Big(|\mathcal O\big((\two \times \mathbf n)\big)| + |\mathcal O\big(\two \du \two\big)| - 1\Big)\\
 = {}&\big(\tfrac12(n+1)(n+2) - 1\big)\big(\tfrac12(n+1)(n+2) +8\big)
\end{align*}
as $\mathcal O\big(\two \du \two\big) \cong \mathcal O(\two) \times \mathcal O(\two) \cong \mathbf 3 \times \mathbf 3$.

\smallskip
\noindent\textbf{Case 2:} $U \cap \min(T) = \{\widehat{\bot^n}\}$. In this case
\[
P = (C\cup B)\comp \down\widehat{\bot^n} = \{\top^0, \mbf^0, \mbt^0\}.
\]
Since $\P \cong \overline \two \oplus \one$, where $\overline \two$ denotes a two-element antichain, the product $(**)$ becomes
\[
\big(|\mathcal O(\mathbf n)| - 1\big) \times |\mathcal O(\overline \two \oplus \one)| = n \times 5.
\]

\smallskip
\noindent\textbf{Case 3:} $U \cap \min(T) = \{\widehat\zero^n, \widehat\one^n\}$. In this case,
\[
P = (C\cup B)\comp (\down\widehat{\zero^n}\cup \down\widehat{\one^n}) = \{\widehat{\bot^0}, \top^0\}.
\]
Since $\P \cong \overline\two$, the product $(**)$ becomes
\[
\Big(|\mathcal O(\two \times \mathbf n)| - 1\Big)^2 \times |\mathcal O(\overline\two)| = \big(\tfrac12(n+1)(n+2) - 1\big)^2 \times 4.
\]

The other cases are argued similarly. Multiplying each term in the second column of Table~\ref{tab:g(n)} by the number of sets in the corresponding cell in the
first column and summing gives
\begin{multline*}
g(n) = 2\cdot\big(\tfrac12(n+1)(n+2) - 1\big)\big(\tfrac12(n+1)(n+2) +8\big) + 2\cdot 5n\\
 + 4\big(\tfrac12(n+1)(n+2) - 1\big)^2 + 4\cdot 3n\big(\tfrac12(n+1)(n+2) - 1\big)   + n^2\\
 + 2\cdot n^2\big(\tfrac12(n+1)(n+2) - 1\big) + 2\cdot 2n\big(\tfrac12(n+1)(n+2) - 1\big)^2\\
  + n^2\big(\tfrac12(n+1)(n+2) - 1\big)^2
\end{multline*}
which, after simplification, gives the formula in the claim.
\renewcommand{\qedsymbol}{$\diamond$}
\end{proof}

Finally, to verify the formula for $|F_{\CV_n}(1)|$ given in the statement of the theorem, simply calculate $f(n) + g(n)$.
\end{proof}

%%%%%%%%%%%%%%%%%%%%%%%%%%%%%%%%%%%%%%%%%%%%%%%%%%%%%%%%%%%%%%%%%%%%
\section{Piggybacking and the proof of Theorem~\ref{thm:trans}}\label{sec:piggy}
%%%%%%%%%%%%%%%%%%%%%%%%%%%%%%%%%%%%%%%%%%%%%%%%%%%%%%%%%%%%%%%%%%%%

Our proof of Theorem~\ref{thm:trans} employs a very useful tool from the theory of piggyback dualities. We shall not develop the theory in full generality, but only in the form determined by the duality for~$\CV_n = \ISP(\{\M_0, \M_1, \dots, \M_n\})$ given in Theorem~\ref{cor:bigmultiduality}. We follow the presentation given in~\cite[Chapter 7]{CD98}. For the history of piggyback dualities, we refer the reader to Davey and Werner~\cite{DW85, DW86}, where the single-sorted version was introduced, to Davey and Priestley~\cite{DP-multi} where the theory was expanded to the setting of multi-sorted dualities, and to Davey, Haviar and Priestley~\cite{DHP-revisited} for the extension from algebras to structures of the single-sorted case.

Before we can present the tool that we will be using, we need to introduce some notation and terminology. We make use of the fact that the truth lattice of each algebra $\A$ in $\CV_n$ has a reduct $\A^\flat$ in $\CCD$, the variety of bounded distributive lattices. As in the previous sections, we fix $n \in \omega\comp\{0\}$.

\begin{df}
 For all $j, k\in [0, n]$, let $\omega_1 \in \CCD(\M_j^\flat,\two)$ and
$\omega_2 \in \CCD(\M_k^\flat,\two)$.
The set
\[
 (\omega_1,\omega_2)^{-1}(\leqslant) =
\{\,(a,b) \in M_j \times M_k \mid \omega_1(a) \leqslant \omega_2(b)\}
\]
forms a bounded sublattice of $\M_j^\flat\times \M_k^\flat$. We will denote by $\Rsub{\omega_1\omega_2}$ the (possibly empty) set of subuniverses of $\M_j\times \M_k$ that are maximal with respect to being contained in $(\omega_1,\omega_2)^{-1}(\leqslant)$. The maps $\omega_1$ and $\omega_2$ are referred to as \emph{carriers} and the relations in the set $\Rsub{\omega_1\omega_2}$ are referred to as the \emph{piggyback relations on $\{\M_0, \dots, \M_n\}$ determined by $\omega_1, \omega_2$}.
\end{df}

We turn our attention to the carrier maps that we will be using.
Let $\delta_0, \gamma_0\colon \M_0^\flat\to \two$ and  $\gamma_k, \delta_k\colon \M_k^\flat\to \two$, for $k\in [1, n]$, be determined by
\begin{alignat*}{2}
\delta_0^{-1}(1) &= \{\bot^0, \mbt^0\}, \qquad &
\gamma_0^{-1}(1) &= \{\top^0, \mbt^0\}, \\
\gamma_k^{-1}(1) &= \{\one^k\}, \qquad &
\delta_k^{-1}(1) &= \{\mbf^k, \bot^k, \top^k, \mbt^k, \one^k\};
\end{alignat*}
see Figure~\ref{fig:carriers}.

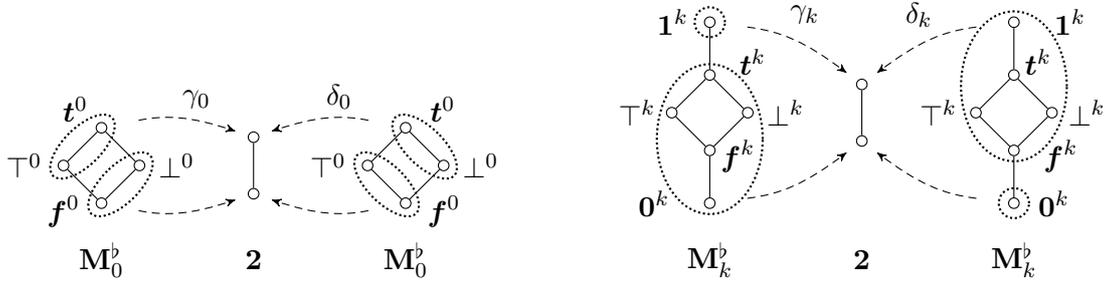
\begin{figure}[ht]
\centering 
{%\small
\begin{tikzpicture}[scale=0.5]
% Truth order of M_0 for gamma_0
\begin{scope}[xshift=-12cm]
  \node at (0,-1.5) {$\M_0^\flat$};
  % Elements
  \node[unshaded] (bot) at (0,0) {};
  \node[unshaded] (f) at (-1,1) {};
  \node[unshaded] (t) at (1,1) {};
  \node[unshaded] (top) at (0,2) {};
  % Order
  \draw[order] (bot) -- (f) -- (top);
  \draw[order] (bot) -- (t) -- (top);
  % Labels
  \node[label,anchor=east,xshift=-2pt,yshift=-3pt] at (bot) {$\mbf^0$};
  \node[label,anchor=east,xshift=-2pt] at (f) {$\top^0$};
  \node[label,anchor=west,xshift=1pt] at (t) {$\bot^0$};
  \node[label,anchor=east,xshift=1pt,yshift=7pt] at (top) {$\mbt^0$};
  %% Potatoes
  \node[thick,ellipse,draw,densely dotted,inner xsep=5pt,inner ysep=11pt,rotate=135] at ($0.5*(top)+0.5*(f)$) {};
  \node[thick,ellipse,draw,densely dotted,inner xsep=5pt,inner ysep=11pt,rotate=135] at ($0.5*(bot)+0.5*(t)$) {};
\end{scope}

% Two-element chain for M_0
\begin{scope}[xshift=-8cm]
  \node at (0,-1.5) {$\boldsymbol 2$};
  % Elements
  \node[unshaded] (0) at (0,0.25) {};
  \node[unshaded] (1) at (0,1.75) {};
  % Order
  \draw[order] (0) -- (1);
  % curvy gamma_0
  \node at (-1.5,2.75) {$\gamma_0$};
  \path (top) edge [curvy,bend left=15] node {} (1);
  \path (bot) edge [curvy,bend right=15] node {} (0);
\end{scope}

% Truth order of M_0 for delta_0
\begin{scope}[xshift=-4cm]
  \node at (0,-1.5) {$\M_0^\flat$};
  % Elements
  \node[unshaded] (bot) at (0,0) {};
  \node[unshaded] (f) at (-1,1) {};
  \node[unshaded] (t) at (1,1) {};
  \node[unshaded] (top) at (0,2) {};
  % Order
  \draw[order] (bot) -- (f) -- (top);
  \draw[order] (bot) -- (t) -- (top);
  % Labels
  \node[label,anchor=west,xshift=2pt,yshift=-3pt] at (bot) {$\mbf^0$};
  \node[label,anchor=east,xshift=-2pt] at (f) {$\top^0$};
  \node[label,anchor=west,xshift=1pt] at (t) {$\bot^0$};
  \node[label,anchor=west,xshift=3pt,yshift=7pt] at (top) {$\mbt^0$};
  %% Potatoes
  \node[thick,ellipse,draw,densely dotted,inner xsep=5pt,inner ysep=11pt,rotate=45] at ($0.5*(top)+0.5*(t)$) {};
  \node[thick,ellipse,draw,densely dotted,inner xsep=5pt,inner ysep=11pt,rotate=45] at ($0.5*(bot)+0.5*(f)$) {};
  % curvy delta_0
  \node at (-1.75,2.825) {$\delta_0$};
  \path (top) edge [curvy, bend right=15] node {} (1);
  \path (bot) edge [curvy, bend left=15] node {} (0);
\end{scope}

% Truth order of M_k for gamma_k
\begin{scope}[xshift=4cm]
  \node at (0,-1.5) {$\M_k^\flat$};
  % Elements
  \node[unshaded] (0) at (0,0) {};
  \node[unshaded] (f) at (0,1.4) {};
  \node[unshaded] (top) at (-1,2.4) {};
  \node[unshaded] (bot) at (1,2.4) {};
  \node[unshaded] (t) at (0,3.4) {};
  \node[unshaded] (1) at (0,4.8) {};
  % Order
  \draw[order] (0) -- (f) -- (top) -- (t) -- (1);
  \draw[order] (f) -- (bot) -- (t);
  % Labels
  \node[label,anchor=west,xshift=1pt] at (bot) {$\bot^k$};
  \node[label,anchor=west,xshift=-2pt,yshift=-2pt] at (f) {$\mbf^k$};
  \node[label,anchor=west,xshift=5pt,yshift=5pt] at (t) {$\mbt^k$};
  \node[label,anchor=east,xshift=-9pt] at (0) {$\zero^k$};
  \node[label,anchor=east,xshift=-2pt] at (1) {$\one^k$};
  \node[label,anchor=east] at (top) {$\top^k$};
  %% Potatoes
  \node[thick,ellipse,draw,densely dotted,inner xsep=4pt,inner ysep=4pt] at (1) {};
  \node[thick,ellipse,draw,densely dotted,inner xsep=14pt,inner ysep=20pt] at ($0.5*(0)+0.5*(t)$) {};
\end{scope}

% Two-element chain for M_k
\begin{scope}[xshift=8cm]
  \node at (0,-1.5) {$\boldsymbol 2$};
  % Elements
  \node[unshaded] (c0) at (0,1.65) {};
  \node[unshaded] (c1) at (0,3.15) {};
  % Order
  \draw[order] (c0) -- (c1);
  % curvy gamma_k
  \node at (-1.5,5) {$\gamma_k$};
  \path (1) edge [curvy,bend left=15] node {} (c1);
  \path (0) edge [curvy,bend right=15] node {} (c0);
\end{scope}

% Truth order of M_k for delta_k
\begin{scope}[xshift=12cm]
  \node at (0,-1.5) {$\M_k^\flat$};
  % Elements
  \node[unshaded] (0) at (0,0) {};
  \node[unshaded] (f) at (0,1.4) {};
  \node[unshaded] (top) at (-1,2.4) {};
  \node[unshaded] (bot) at (1,2.4) {};
  \node[unshaded] (t) at (0,3.4) {};
  \node[unshaded] (1) at (0,4.8) {};
  % Order
  \draw[order] (0) -- (f) -- (top) -- (t) -- (1);
  \draw[order] (f) -- (bot) -- (t);
  % Labels
  \node[label,anchor=west] at (bot) {$\bot^k$};
  \node[label,anchor=west,xshift=5pt,yshift=-2pt] at (f) {$\mbf^k$};
  \node[label,anchor=west,xshift=-2pt,yshift=5pt] at (t) {$\mbt^k$};
  \node[label,anchor=west,xshift=3pt] at (0) {$\zero^k$};
  \node[label,anchor=west,xshift=9pt] at (1) {$\one^k$};
  \node[label,anchor=east,xshift=-1pt] at (top) {$\top^k$};
  %% Potatoes
  \node[thick,ellipse,draw,densely dotted,inner xsep=4pt,inner ysep=4pt] at (0) {};
  \node[thick,ellipse,draw,densely dotted,inner xsep=14pt,inner ysep=20pt] at ($0.5*(1)+0.5*(f)$) {};
  % curvy delta_k
  \node at (-2.5,5) {$\delta_k$};
  \path (1) edge [curvy,bend right=15] node {} (c1);
  \path (0) edge [curvy,bend left=15] node {} (c0);
\end{scope}
\end{tikzpicture}}
\caption{The carriers}\label{fig:carriers}
\end{figure}

The following simple lemma, which can be easily verified by the reader, is the key to the  theorem that follows it.
For all $k\in [0, n]$, define the carrier set $\Omega_k :=\{\gamma_k, \delta_k\}$.

\begin{lem}\label{lem:Pigsep}
The maps $g_1, \dots, g_n$ and the carrier sets $\Omega_0, \dots, \Omega_n$ satisfy the following separation condition:
\begin{enumerate}[label=\textup{(Sep)},leftmargin=1.8\parindent,itemsep=0pt] 
\item for all $k\in [0, n]$ and all $a, b \in M_k$ with $a \neq b$, there exists $\omega \in \Omega_k$ such that $\omega(a)\neq \omega(b)$ or there exists $\omega \in \Omega_0$ such that $\omega(g_k(a))\neq \omega(g_k(b))$.
\end{enumerate}
\end{lem}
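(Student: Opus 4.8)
The plan is to reduce the statement to a short finite case analysis, organised around the maps $g_k$. The only structural fact needed beyond the explicit definitions of the carriers is that each $g_k$ is a homomorphism in the signature of $\JB_n$ and hence preserves the nullary operations $\top$ and $\bot$; this is the one point that warrants a moment's care, since it is what prevents $\top^k$ and $\bot^k$ from being accidentally collapsed by $g_k$.

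First I would treat $k = 0$. Here $g_0 = \id_{M_0}$, so the two alternatives in \textup{(Sep)} coincide and it suffices to check that $\Omega_0 = \{\gamma_0, \delta_0\}$ separates the four points of $M_0 = \{\mbf^0, \top^0, \bot^0, \mbt^0\}$. From $\gamma_0^{-1}(1) = \{\top^0, \mbt^0\}$ and $\delta_0^{-1}(1) = \{\bot^0, \mbt^0\}$ the pair $(\gamma_0, \delta_0)$ sends $\mbf^0, \top^0, \bot^0, \mbt^0$ to the distinct pairs $(0,0)$, $(1,0)$, $(0,1)$, $(1,1)$ in $\{0,1\}^2$, so we are done.

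For $k \in [1, n]$ I would first record the action of $g_k$ on all six elements of $M_k$: by definition $g_k(\zero^k) = g_k(\mbf^k) = \mbf^0$ and $g_k(\one^k) = g_k(\mbt^k) = \mbt^0$, while preservation of the constants gives $g_k(\top^k) = \top^0$ and $g_k(\bot^k) = \bot^0$; thus $g_k$ is injective on $M_k$ except that it identifies $\zero^k$ with $\mbf^k$ and $\one^k$ with $\mbt^k$. Now let $a \ne b$ in $M_k$. If $g_k(a) \ne g_k(b)$, then by the case $k = 0$ there is $\omega \in \Omega_0$ with $\omega(g_k(a)) \ne \omega(g_k(b))$, which is the second alternative of \textup{(Sep)}. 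If instead $g_k(a) = g_k(b)$, then $\{a, b\}$ is $\{\zero^k, \mbf^k\}$ or $\{\one^k, \mbt^k\}$: in the first case $\delta_k$ separates $a$ and $b$, since $\delta_k^{-1}(1)$ contains $\mbf^k$ but not $\zero^k$, and in the second case $\gamma_k$ separates them, since $\gamma_k^{-1}(1) = \{\one^k\}$; either way the first alternative of \textup{(Sep)} holds. Since all the cases are finite and elementary, no genuine obstacle arises.
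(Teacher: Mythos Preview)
Your proof is correct; the paper does not give a proof of this lemma at all, merely remarking that it ``can be easily verified by the reader'', and your finite case analysis organised around the fibres of $g_k$ is precisely the intended verification.
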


Given that (Sep) holds, the following theorem is a direct application of Cabrer and Priestley's description of the Priestley dual via piggyback relations~\cite[Theorem~2.3]{CP-coprod}.

\begin{thm}\label{thm:HilLeo}
Let $\A\in \CV_n$. Define
\[
X = \bigcup\big\{\,X_k \mid k \in [0, n]\,\big\}\quad \text{and} \quad  Y = \bigcup\big\{\,X_k \times \Omega_k \mid k \in [0, n]\,\big\},
\]
where $X_k := \CV_n(\A, \M_k)$, for all $k\in [0, n]$.
\begin{itemize}[itemsep=0pt] 
\item Let $\Tp^Y$ be the topology on the set $Y$ induced by the natural topology on $\CV_n(\A, \M_k)$, for $k\in [0, n]$, and the discrete topology on $\Omega_k$.

\item Given $j, k\in [0,n]$, $(x, \omega_1)\in X_j \times \Omega_j$ and $(y, \omega_2)\in X_k \times \Omega_k$, define $(x, \omega_1) \leP (y, \omega_2)$ if $(x, y) \in R^X$ for some $R\in \Rsub{\omega_1 \omega_2}$, where $R^X$ is the natural pointwise extension of $R$ to $X$.
\end{itemize}
Then $\leP$ is a quasi-order on $Y$ and $\langle Y/{\approx}; {\leP}/{\approx}, \Tp/{\approx}\rangle$ is a Priestley space isomorphic to $\mathrm H(\A^\flat)$, where $\approx$ is the equivalence relation determined by $\leP$ and $\Tp/{\approx}$ is the quotient topology.
\end{thm}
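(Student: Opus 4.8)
The plan is to derive Theorem~\ref{thm:HilLeo} as a direct instance of the multi-sorted piggyback description of the Priestley dual due to Cabrer and Priestley \cite[Theorem~2.3]{CP-coprod}; the work lies entirely in checking that our concrete data satisfy the hypotheses of that theorem. The relevant data are the quasi-variety $\CV_n = \ISP(\{\M_0,\dots,\M_n\})$ (Theorem~\ref{cor:bigmultiduality}), the common bounded-distributive-lattice term-reduct $\A\mapsto\A^\flat$ carrying each generator $\M_k$ to $\M_k^\flat$, the homomorphisms $g_1,\dots,g_n$ together with the convention $g_0:=\id_{\M_0}$, and the carrier sets $\Omega_k=\{\gamma_k,\delta_k\}$. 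For a fixed $\A\in\CV_n$, the natural duality of Theorem~\ref{cor:bigmultiduality} provides the sorts $X_k=\CV_n(\A,\M_k)$, their Boolean topologies, and the fact that these dual points faithfully represent~$\A$; this is what supplies the underlying set and topology $\Tp^Y$ of the space $Y$.

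The first point I would record is the factorization that underlies any piggyback representation of $\mathrm H(\A^\flat)$: every $\CCD$-morphism $u\colon\A^\flat\to\two$ has the form $u=\omega\circ x$ for some $k\in[0,n]$, some $x\in X_k$ and some $\omega\in\CCD(\M_k^\flat,\two)$. The subtlety is that the chosen carrier sets are proper subsets of $\CCD(\M_k^\flat,\two)$ for $k\ge 1$: there $\M_k^\flat$ has four prime filters, namely $\up\mbf^k,\up\top^k,\up\bot^k,\up\one^k$, while $\Omega_k$ realises only two of them, as $\delta_k^{-1}(1)=\up\mbf^k$ and $\gamma_k^{-1}(1)=\{\one^k\}=\up\one^k$. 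The two omitted carriers are recovered through the operation $g_k$: a short check gives $(\gamma_0\circ g_k)^{-1}(1)=\up\top^k$ and $(\delta_0\circ g_k)^{-1}(1)=\up\bot^k$, so that $\omega\circ x$ is always represented either by a pair $(x,\omega)\in X_k\times\Omega_k$ or by the pair $(g_k\circ x,\omega_0)\in X_0\times\Omega_0$, using $g_k\circ x\in X_0$. The separation condition (Sep) of Lemma~\ref{lem:Pigsep} is precisely the hypothesis that legitimises this recovery and at the same time guarantees that distinct points of each sort are distinguished by the carriers and operations; it is the one nontrivial hypothesis of the Cabrer--Priestley theorem in our situation, and it is already in hand.

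With (Sep) verified, the general theorem applies directly. The piggyback relations $\Rsub{\omega_1\omega_2}$ --- the maximal subuniverses of $\M_j\times\M_k$ contained in $(\omega_1,\omega_2)^{-1}(\le)$, now allowed to run across sorts $j\ne k$ --- determine the relation $\leP$ on $Y=\bigcup_k X_k\times\Omega_k$, and the theorem supplies everything asserted in the statement: that $\leP$ is a quasi-order, that the induced equivalence $\approx$ collapses exactly those pairs of $Y$ representing the same homomorphism $\A^\flat\to\two$, and that $\langle Y/{\approx};\,{\leP}/{\approx},\,\Tp/{\approx}\rangle$ is a Priestley space isomorphic to $\mathrm H(\A^\flat)$. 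I would close by invoking \cite[Theorem~2.3]{CP-coprod}.

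The main obstacle is one of faithful translation rather than computation. One must confirm that (Sep), as phrased in Lemma~\ref{lem:Pigsep}, is exactly the separation hypothesis required by Cabrer and Priestley, and that our definition of $\Rsub{\omega_1\omega_2}$ --- including the cross-sort relations and the convention $g_0=\id$ --- matches the relations used in their construction. Once these identifications are made, nothing further is needed; in particular the quasi-order property of $\leP$, the description of $\approx$, and the Priestley-space structure of the quotient are all delivered by the cited theorem and need not be reproved here.
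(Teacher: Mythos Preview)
Your proposal is correct and mirrors the paper's own treatment: the paper likewise derives Theorem~\ref{thm:HilLeo} as a direct application of Cabrer and Priestley~\cite[Theorem~2.3]{CP-coprod}, with Lemma~\ref{lem:Pigsep} supplying the required separation condition (Sep). Your additional discussion of how the omitted carriers on $\M_k^\flat$ are recovered through $g_k$ is a helpful unpacking of why (Sep) is the pertinent hypothesis, but the overall strategy is identical.
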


To unpack this result and see how it leads directly to the proof of Theorem~\ref{thm:trans}, we need to calculate the sets $\Rsub{\omega_1 \omega_2}$ of  piggyback relations for all $\omega_1, \omega_2 \in \bigcup\{\Omega_k \mid k \in [0, n]\}$. Along the way we shall see that, in our case, the quasi-order $\leP$ defined in Theorem~\ref{thm:HilLeo} is in fact an order.

The following result from \cite{CDH19} will allow us to draw the subalgebra lattices of $\M_j\times \M_k$, for $j, k\in [0, n]$. Along with the relations $\lek$, for $k\in [0, n]$, and $\lejk$, for $j < k$,  used in the duality for $\CV_n$, we require some further relations. For $j > k$, let $\geqslant^{jk}$ (a relation from $M_j$ to $M_k$) be the converse of $\le^{kj}$, and for all $j, k\in [0, n]$, define:
\begin{align*}
S_\le^{jk} &= (g_j, g_k)^{-1}(\lez) = \{\,(a, b)\in M_j \times M_k \mid g_j(a)\lez g_k(b)\,\},\\ 
S_\ge^{jk} &= (g_j, g_k)^{-1}(\gez) = \{\,(a, b)\in M_j \times M_k \mid g_j(a)\gez g_k(b)\,\}.
\end{align*}
Note that $S_\ge^{jk}$ is the converse of $S_\le^{kj}$. In some of our calculations, we require the following explicit descriptions of $S_\le^{jk}$ and $S_\ge^{jk}$---see~\cite[Section 5.1]{CDH19}. For $k\in [0,n]$, let $\bsF_k$ and $\bsT_k$ denote, respectively, the sets of `false' constants and `true' constants in~$\M_k$. Then
\begin{equation}\tag{$\ddagger$}\label{Sjk}
\begin{aligned}
S_\le^{jk} &= \big(M_j\times \{\top^k\}\big)\cup \big(\{\bot^j\}\times M_k\big) \cup \big(\bsF_j \times \bsF_k\big) \cup \big(\bsT_j \times \bsT_k\big),\\
S_\ge^{jk} &= \big(M_j\times \{\bot^k\}\big)\cup \big(\{\top^j\}\times M_k\big) \cup \big(\bsF_j \times \bsF_k\big) \cup \big(\bsT_j \times \bsT_k\big).
\end{aligned}
\end{equation}

\begin{thm}[{\cite[Lemma 5.4, Theorem 6.1]{CDH19}}]\label{thm:MImulti}
Let $n\in \omega\comp\{0\}$.
\begin{enumerate}[label=\textup{(\arabic*)},itemsep=0pt] 

\item The meet-irreducible elements of
$\Sub(\M_0\times \M_0)$ are $\lez$ and $\gez$.

\item For all $k\in [1, n]$, the meet-irreducible elements of
$\Sub(\M_k\times \M_k)$ are $\lek$, $\gek$, $S_\le^{kk}$ and $S_\ge^{kk}$.

\item For all $k\in [1, n]$, the meet-irreducible elements of $\Sub(\M_0\times \M_k)$ are $S_\le^{0k}$ and~$S_\ge^{0k}$, and the  meet-irreducible elements of $\Sub(\M_k\times \M_0)$ are $S_\le^{k0}$ and $S_\ge^{k0}$.

\item For all $j,k\in [1, n]$, with $j \ne k$, the meet-irreducible elements of $\Sub(\M_j\times \M_k)$ are $S_\le^{jk}$ and $S_\ge^{jk}$, along with $\lejk$, if $j < k$, and $\ge^{jk}$, if $j > k$.

\end{enumerate}
\end{thm}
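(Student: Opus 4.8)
The plan is to compute, for every pair $j,k\in[0,n]$, the whole subalgebra lattice $\Sub(\M_j\times\M_k)$ and then to read off its meet-irreducible elements as those subuniverses possessing a \emph{unique} upper cover. Three reductions make this feasible. First, every element of $\M_j$ and of $\M_k$ is named by a constant of the signature of $\JB_n$, so each subuniverse of $\M_j\times\M_k$ contains all the constant pairs $(c^{\M_j},c^{\M_k})$; in particular both projections are onto, every subuniverse is a subdirect product, the set of constant pairs is the least subuniverse, and every subuniverse is closed under applying $\neg$ in each coordinate. Secondly, the converse map $R\mapsto R\conv$ is a lattice isomorphism $\Sub(\M_j\times\M_k)\cong\Sub(\M_k\times\M_j)$ sending each ``$\le$-type'' relation to the corresponding ``$\ge$-type'' one, so it suffices to treat $j\le k$. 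Thirdly, and crucially, $\gamma:=g_j\times g_k\colon\M_j\times\M_k\to\M_0\times\M_0$ is a surjective homomorphism, whence $\gamma(R)\in\Sub(\M_0\times\M_0)$ and $R\subseteq\gamma^{-1}(\gamma(R))$ for every subuniverse $R$. (All the relations in the statement are subuniverses: $S_\le^{jk}$ and $S_\ge^{jk}$ are $\gamma$-preimages of subuniverses of $\M_0\times\M_0$, while $\lek$ and $\lejk$ occur among the compatible relations of Theorem~\ref{cor:bigmultiduality}.)

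First I would settle the base case. Since $\M_0$ is (term-equivalent to) Belnap's $\FOUR$, a direct check shows that $\Sub(\M_0\times\M_0)$ is the four-element lattice with least element the diagonal $\Delta_{M_0}$ (the constant pairs), greatest element $M_0\times M_0$, and two incomparable middle elements, namely $\lez$ (the knowledge order) and $\gez$; hence these are exactly its meet-irreducibles, proving~(1), and each is a coatom. This classification propagates along $\gamma$: for $R\le\M_j\times\M_k$ we have $\gamma(R)\in\{\Delta_{M_0},\lez,\gez,M_0\times M_0\}$, so $R$ lies in one of $\gamma^{-1}(\Delta_{M_0})=S_\le^{jk}\cap S_\ge^{jk}$, $S_\le^{jk}=\gamma^{-1}(\lez)$, $S_\ge^{jk}=\gamma^{-1}(\gez)$, or $M_j\times M_k$. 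The pivotal lemma is
\[
\gamma(R)=M_0\times M_0\ \Longrightarrow\ R=M_j\times M_k,
\]
equivalently: if $R$ contains a pair $(x,y)$ with $\{g_j(x),g_k(y)\}=\{\mbf^0,\mbt^0\}$ then $R$ is the whole product (because in $\Sub(\M_0\times\M_0)$ the pair $(\mbf^0,\mbt^0)$ together with the constants generates $M_0\times M_0$). Granting this, any $R\supsetneq S_\le^{jk}$ has $\gamma(R)\supsetneq\lez$, hence $\gamma(R)=M_0\times M_0$ as $\lez$ is a coatom, hence $R=M_j\times M_k$; so $S_\le^{jk}$ is meet-irreducible with unique upper cover $M_j\times M_k$, and likewise $S_\ge^{jk}$. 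Consequently $\Sub(\M_j\times\M_k)$ is obtained by gluing $\Sub(S_\le^{jk})$ and $\Sub(S_\ge^{jk})$ along $\Sub(\gamma^{-1}(\Delta_{M_0}))$ and adjoining $M_j\times M_k$ on top, and it remains only to describe $\Sub(S_\le^{jk})$ (the $\ge$-side following by converse-symmetry).

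That last step I would carry out by explicit generation inside $\M_j\times\M_k$, establishing the following covering facts. (a) For $j=k\in[1,n]$: the least subuniverse is the diagonal $\Delta_{M_k}$, covered by $\lek$ and by $\gek$; one has $\lek\vee\gek=\gamma^{-1}(\Delta_{M_0})$, which in turn is covered by $S_\le^{kk}$ and by $S_\ge^{kk}$; so $\Sub(\M_k\times\M_k)$ is a seven-element lattice built from two stacked copies of the diamond $M_2$, with meet-irreducibles $\lek,\gek,S_\le^{kk},S_\ge^{kk}$, giving~(2). (b) For $1\le j<k$: the least subuniverse is $\lejk$ itself; $\gamma^{-1}(\Delta_{M_0})$ differs from $\lejk$ only in the $\neg$-related pair $(\zero^j,\mbf^k),(\one^j,\mbt^k)$, so $\lejk\lessdot\gamma^{-1}(\Delta_{M_0})$, while adjoining to $\lejk$ any pair whose $\gamma$-image lies in $\lez\comp\Delta_{M_0}$ already generates $\gamma^{-1}(\Delta_{M_0})$, whence $\gamma^{-1}(\Delta_{M_0})\lessdot S_\le^{jk}$; thus $\Sub(\M_j\times\M_k)$ is the five-element lattice $\lejk<\gamma^{-1}(\Delta_{M_0})<\{S_\le^{jk},S_\ge^{jk}\}<M_j\times M_k$, with meet-irreducibles $\lejk,S_\le^{jk},S_\ge^{jk}$, giving~(4). (c) For $j=0<k$: here $\gamma^{-1}(\Delta_{M_0})$ is the graph of $g_k$, which is already the least subuniverse, and $\gamma^{-1}(\Delta_{M_0})\lessdot S_\le^{0k}$, so $\Sub(\M_0\times\M_k)$ is just $M_2$ with meet-irreducibles $S_\le^{0k},S_\ge^{0k}$ (and $\Sub(\M_k\times\M_0)$ follows by converse), giving~(3). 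The case $n=1$ is subsumed, only $j=k\in\{0,1\}$ arising.

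The main obstacle is precisely this collection of generation claims, together with the pivotal lemma. Because $\M_k$ is \emph{not} interlaced --- on the two-element blocks $\{\mbf^k,\zero^k\}$ and $\{\mbt^k,\one^k\}$ of $\ker g_k$ the knowledge order and the truth order run in opposite directions --- there is no product shortcut, so one must verify closure of each candidate set under all four lattice operations, the negation and the constants by chasing elements one at a time. This is a finite but delicate computation, most cleanly organised by drawing the Hasse diagram of $\Sub(\M_j\times\M_k)$ in each of the four cases $j=k=0$, $j=k\ge1$, $j=0<k$, and $0<j<k$; once these diagrams are in hand, statements~(1)--(4) can be read off.
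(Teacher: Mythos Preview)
The paper does not give its own proof of this theorem: it is quoted verbatim from the companion paper~\cite{CDH19} (as Lemma~5.4 and Theorem~6.1 there), and the present paper only \emph{uses} the result to draw the subalgebra lattices in Figures~\ref{fig:subalg-I&II} and~\ref{fig:subalg-III&IV}. So there is no in-paper argument to compare against.

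Judged on its own terms, your outline is sound and in fact reproduces exactly the four subalgebra lattices that the paper displays. The three structural reductions you isolate are the right ones: the least subuniverse consists of the constant pairs (and you have correctly identified $K_{jk}={\lejk}$ when $0<j<k$ and $K_{0k}=\graph(g_k)\conv$); the converse map transports the $j<k$ case to the $j>k$ case; and the surjection $\gamma=g_j\times g_k$ onto $\M_0\times\M_0$ stratifies $\Sub(\M_j\times\M_k)$ according to $\gamma(R)\in\{\Delta_{M_0},\lez,\gez,M_0\times M_0\}$. Your ``pivotal lemma'' (that $\gamma(R)=M_0\times M_0$ forces $R=M_j\times M_k$) is indeed what makes $S_\le^{jk}$ and $S_\ge^{jk}$ coatoms, and your case analysis inside $\Sub(S_\le^{jk})$ then yields the five-, four-, and seven-element lattices of the figures. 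One small wording slip: in case~(b) the covering $\gamma^{-1}(\Delta_{M_0})\lessdot S_\le^{jk}$ requires showing that adjoining to $\gamma^{-1}(\Delta_{M_0})$ (not to $\lejk$) any pair of $S_\le^{jk}\setminus\gamma^{-1}(\Delta_{M_0})$ generates all of $S_\le^{jk}$; your sentence conflates the two, though the intended claim is clear.

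You are honest that the genuine work lies in the generation computations (the pivotal lemma and the three covering claims), which must be done by hand since $\M_k$ is not interlaced. That is exactly where~\cite{CDH19} carries the weight; your proposal correctly locates the obstacle but does not discharge it.
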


Throughout the remainder of this section we will use, without reference, the descriptions of $\lez$, $\lek$ and $\lejk$ given in $(\dagger)$
in Section~\ref{sec:CVduality},
and the descriptions of $S_\le^{jk}$ and $S_\ge^{jk}$ given in $(\ddagger)$.
In order to make our presentation of the arguments in this section more compact, we will often write $ab$ for the ordered pair $(a,b)$.

With Theorem~\ref{thm:MImulti} in hand, it is easy to calculate $\Sub(\M_j\times \M_k)$, for $j, k\in [0,n]$. The following observations are all that we require.
\begin{itemize}[itemsep=0pt] 

\item The pairs $\lez$ and $\gez$, $\lek$ and $\gek$, $S_\le^{kk}$ and $S_\ge^{kk}$, $S_\le^{0k}$ and $S_\ge^{0k}$, 
and $S_\le^{jk}$ and $S_\ge^{jk}$ are non-comparable.

\goodbreak 

\item Both $\lek$ and $\gek$ are subsets of $S_\le^{kk}$ and $S_\ge^{kk}$.

\item $S_\le^{jk}$ and $S_\ge^{jk}$ contain $\lejk$, if $j < k$, and contain $\ge^{jk}$, if $j > k$.

\item The bottom of $\Sub(\M_j\times \M_k)$ is the set $K_{jk}$ of constants, given by
\begin{align*}
K_{00} &= \{\top^0\top^0,\, \mbf^0\mbf^0,\, \mbt^0\mbt^0,\, \bot^0 \bot^0\} = \Delta_{M_0},\\
K_{0k} &= \{\top^0 \top^k,\,  \mbf^0\mbf^k,\, \mbf^0\zero^k,\, \mbt^0\mbt^k,\, \mbt^0\one^k, \,\bot^0\bot^k\}\\
&= \graph(g_k)\conv,
\text{ the converse of the graph of $g_k$},\\
K_{kk} &= \{\top^k\top^k,\, \mbf^k\mbf^k,\, \zero^k\zero^k,\, \mbt^k\mbt^k,\, \one^k\one^k,\, \bot^k \bot^k\} = \Delta_{M_k},\\
K_{jk} &= \{\top^j\top^k,\, \mbf^j\mbf^k,\, \mbf^j\zero^k,\,\zero^j\zero^k,\, \mbt^j\mbt^k,\, \mbt^j\one^k,\, \one^j\one^k,\, \bot^j \bot^k\} = {\lejk},
\end{align*}
where $0 <k$ in the second case and $j <k$ in the last case. In each case, the elements of $K_{jk}$ are determined by the way the constants are assigned to $\M_j$ and $\M_k$---see Definitions~\ref{def:M0} and \ref{def:Mk}.
\end{itemize}

It follows that the lattices $\Sub(\M_j\times \M_k)$ are as given in Figures~\ref{fig:subalg-I&II} and~\ref{fig:subalg-III&IV}; the meet-irreducible elements are shaded.

\begin{figure}[ht]
\centering 
\begin{tikzpicture}[scale=0.7]

\draw[order] (-5,13) -- (-6,14) -- (-5,15);
\draw[order] (-5,13) -- (-4,14) -- (-5,15);
\node[unshaded] at (-5,15){};
\node[shaded] at (-6,14) {};
\node[shaded] at (-4,14) {};
\node[unshaded] at (-5,13) {};
\node[label] at (-5,15.5){$\M_0\times \M_0$};
\node[label,anchor=east] at (-6,14){$\lez$};
\node[label,anchor=west] at (-4,14) {$\gez$};
\node[label] at (-4.91,12.5){$K_{00} = \Delta_{M_0}$};

\draw[order] (2,13) -- (1,14) -- (2,15);
\draw[order] (2,13) -- (3,14) -- (2,15);
\node[unshaded] at (2,15){};
\node[shaded] at (1,14) {};
\node[shaded] at (3,14) {};
\node[unshaded] at (2,13) {};
\node[label] at (2,15.5){$\M_0\times \M_k$};
\node[label,anchor=east] at (1,14){$S_{\le}^{0k}$};
\node[label,anchor=west] at (3,14) {$S_{\ge}^{0k}$};
\node[label,anchor=west] at (0.493,12.5){$K_{0k} = \graph(g_k)\conv$};
\end{tikzpicture}
\caption{$\Sub(\M_0\times
\M_0)$ and $\Sub(\M_0\times \M_k)$ for $0 <k$}\label{fig:subalg-I&II}
\begin{tikzpicture}[scale=0.7]

\draw[order] (-5,11) -- (-6,12) -- (-5,13) -- (-6,14) -- (-5,15);
\draw[order] (-5,11) -- (-4,12) -- (-5,13) -- (-4,14) -- (-5,15);
\node[unshaded] at (-5,15){};
\node[shaded] at (-6,14) {};
\node[shaded] at (-4,14) {};
\node[unshaded] at (-5,13) {};
\node[shaded] at (-6,12) {};
\node[shaded] at (-4,12) {};
\node[unshaded] at (-5,11) {};
\node[label] at (-5,15.5){$\M_k\times \M_k$};
\node[label,anchor=east] at (-6,14){$S_{\le}^{kk}$};
\node[label,anchor=west] at (-4,14) {$S_{\ge}^{kk}$};
\node[label,anchor=east,yshift=4pt] at (-6,12) {$\lek$};
\node[label,anchor=west,yshift=4pt] at (-4,12) {$\gek$};
\node[label] at (-4.94,10.5){$K_{kk} = \Delta_{M_k}$};

\draw[order] (2,11.5) -- (2,13) -- (1,14) -- (2,15);
\draw[order] (2,13) -- (3,14) -- (2,15);
\node[unshaded] at (2,15){};
\node[shaded] at (1,14) {};
\node[shaded] at (3,14) {};
\node[unshaded] at (2,13) {};
\node[shaded] at (2,11.5) {};
\node[label] at (2,15.5){$\M_j\times \M_k$};
\node[label,anchor=east] at (1,14){$S_{\le}^{jk}$};
\node[label,anchor=west] at (3,14) {$S_{\ge}^{jk}$};
\node[label] at (1.97,11){$K_{jk} = {\lejk}$};
\end{tikzpicture}
\caption{$\Sub(\M_k\times
\M_k)$ and $\Sub(\M_j\times
\M_k)$ for $j <k$}\label{fig:subalg-III&IV}
\end{figure}

\subsection{The piggyback relations}
The following simple lemma will more or less cut our work in half. Define $\Omega = \bigcup\{\, \Omega_k \mid k\in [0, n]\,\}$, and define $' \colon \Omega \to \Omega$ by $\gamma_k' = \delta_k$ and $\delta_k' = \gamma_k$, for all $k\in [0, n]$. For a subset $S$ of an algebra $\A\in \CV_n$, define $\neg S =\{\,\neg a \mid a \in S\,\}$. Given a set $\mathcal R$ of multi-sorted binary relations, define $\mathcal R\conv =\{\, R\conv \mid R\in \mathcal R\,\}$,
where $R\conv$ is the converse of~$R$.

\begin{lem}\label{lem:sym} For all $\omega,\omega_1, \omega_2\in \Omega$, we have
\begin{enumerate}[label=\textup{(\alph*)},itemsep=0pt] 

\item $\omega' = c\circ \omega \circ \neg$, where $c\colon\{0, 1\} \to \{0, 1\}$ is complementation.

\item $(\omega_2, \omega_1)^{-1}(\le) = \neg\big((\omega_1', \omega_2')^{-1}(\le)\conv\,\big)$,

\item $\Rsub{\omega_2\omega_1} = \mathcal R\convsub{\omega_1'\omega_2'}$.

\end{enumerate}
\end{lem}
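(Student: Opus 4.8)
The plan is to prove the three parts in the order stated, using each to feed the next. Parts (a) and (b) are essentially computational, and part (c) turns on a single observation: every subuniverse of a product $\M_j\times\M_k$ is closed under the coordinatewise negation and, $\neg$ being an involution, is therefore \emph{fixed} by it.

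For (a) I would verify the identity $\omega'=c\circ\omega\circ\neg$ sort by sort, by comparing pre-images of~$1$. Recall that on $M_0$ the map $\neg$ swaps $\mbf^0$ with $\mbt^0$ and fixes $\top^0,\bot^0$, while on $M_k$ (for $k\in[1,n]$) it swaps $\mbf^k$ with $\mbt^k$ and $\zero^k$ with $\one^k$, fixing $\top^k,\bot^k$. Then, for instance, $(c\circ\gamma_0\circ\neg)^{-1}(1)=\neg\bigl(\gamma_0^{-1}(0)\bigr)=\neg\{\mbf^0,\bot^0\}=\{\mbt^0,\bot^0\}=\delta_0^{-1}(1)$, so $\gamma_0'=c\circ\gamma_0\circ\neg$; the three remaining cases $\delta_0'=\gamma_0$, $\gamma_k'=\delta_k$, $\delta_k'=\gamma_k$ are checked the same way from the descriptions in Figure~\ref{fig:carriers}. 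For (b) I would compute directly. Using the definitions and the involutivity of $\neg$,
\[
\neg\bigl((\omega_1',\omega_2')^{-1}(\le)\conv\bigr)
=\{(\neg a,\neg b)\mid \omega_1'(b)\le\omega_2'(a)\}
=\{(a,b)\mid \omega_1'(\neg b)\le\omega_2'(\neg a)\}.
\]
By (a), $\omega_1'(\neg b)=c(\omega_1(b))$ and $\omega_2'(\neg a)=c(\omega_2(a))$, so the condition $\omega_1'(\neg b)\le\omega_2'(\neg a)$ is equivalent to $c(\omega_1(b))\le c(\omega_2(a))$, that is, to $\omega_2(a)\le\omega_1(b)$; hence the set on the left equals $(\omega_2,\omega_1)^{-1}(\le)$.

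For (c), first record the key fact: if $R$ is a subuniverse of $\M_j\times\M_k$, then $R$ is closed under the fundamental operation $\neg$ acting coordinatewise, so $\neg R:=\{(\neg a,\neg b)\mid(a,b)\in R\}\subseteq R$; applying this to $(\neg a,\neg b)\in R$ gives $(a,b)\in\neg R$, so $\neg R=R$. Now fix sorts so that $\omega_2$ lives on $M_j$ and $\omega_1$ on $M_k$. For a subuniverse $R$ of $\M_j\times\M_k$, part (b) together with the monotonicity and involutivity (on subsets) of $R\mapsto\neg R$ and $R\mapsto R\conv$ gives
\[
R\subseteq(\omega_2,\omega_1)^{-1}(\le)
\iff \neg R\subseteq(\omega_1',\omega_2')^{-1}(\le)\conv
\iff R\subseteq(\omega_1',\omega_2')^{-1}(\le)\conv
\iff R\conv\subseteq(\omega_1',\omega_2')^{-1}(\le),
\]
where the middle step uses $\neg R=R$. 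Since $R\mapsto R\conv$ is an inclusion-preserving bijection between the subuniverses of $\M_j\times\M_k$ and those of $\M_k\times\M_j$, it carries the subuniverses of $\M_j\times\M_k$ contained in $(\omega_2,\omega_1)^{-1}(\le)$ bijectively, and order-isomorphically, onto the subuniverses of $\M_k\times\M_j$ contained in $(\omega_1',\omega_2')^{-1}(\le)$; hence it maps maximal ones to maximal ones, giving $\Rsub{\omega_2\omega_1}\conv=\Rsub{\omega_1'\omega_2'}$, which rearranges (applying $\conv$ once more) to $\Rsub{\omega_2\omega_1}=\mathcal R\convsub{\omega_1'\omega_2'}$.

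The only delicate point — the ``main obstacle'', such as it is — is bookkeeping. Note that $\neg$ is \emph{not} an algebra automorphism of $\M_k$, since it moves the constants $\mbf_m,\mbt_m$, so the equality $\neg R=R$ must be derived from closure together with involutivity rather than from $\neg$ being an automorphism; and it is valid only for genuine subuniverses, \emph{not} for the set $(\omega_1,\omega_2)^{-1}(\le)$ itself, which is merely a sublattice of the $\flat$-reduct. One must also keep the domains straight when forming converses, so that throughout the argument $\omega_i$ and $\omega_i'$ are attached to the correct sort.
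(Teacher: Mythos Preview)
Your proof is correct and follows essentially the same route as the paper: part~(a) by direct inspection of the carriers, part~(b) by an elementary chain of equalities using (a) and the involutivity of~$\neg$, and part~(c) from (b) together with the observation that subuniverses satisfy $\neg R=R$. The paper compresses (c) to a single sentence (``follows at once from (b) as subuniverses are closed under~$\neg$''), whereas you spell out explicitly why $\neg R=R$ and why $R\mapsto R\conv$ carries maximal subalgebras to maximal subalgebras; your closing remark that $\neg$ is not an automorphism (it moves the constants) and that $\neg R=R$ is available only for genuine subuniverses, not for the lattice $(\omega_1,\omega_2)^{-1}(\le)$ itself, is a worthwhile clarification that the paper leaves implicit.
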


\begin{proof}
(a) This is a simple calculation.

(b) Using the fact that $\neg$ is an involution, and that $\neg$ and $\conv$ commute, we have
\begin{align*}
(\omega_2, \omega_1)^{-1}(\le) &= \{\, (a,b) \mid \omega_2(a) \le \omega_1(b)\,\}\\
&= \{\, (b,a) \mid \omega_1(b) \ge \omega_2(a)\,\}\conv\\
&= \{\, (b,a) \mid c(\omega_1(b)) \le c(\omega_2(a))\,\}\conv\\
&= \{\, (\neg u,\neg v) \mid c(\omega_1(\neg u)) \le c(\omega_2(\neg v))\,\}\conv\\
&= \{\, (\neg u,\neg v) \mid \omega_1'(u) \le \omega_2'(v)\,\}\conv\\
&= \big(\neg(\omega_1', \omega_2')^{-1}(\le)\big)\conv\\
&= \neg\big((\omega_1', \omega_2')^{-1}(\le)\conv\,\big).
\end{align*}

(c) This follows at once from (b) as subuniverses are closed under~$\neg$.
\end{proof}

In searching for maximal subalgebras in $(\omega_1, \omega_2)^{-1}(\leqslant)$ in the products $\M_j\times \M_k$, $j, k\in [0,n]$, we consider
four cases.

\smallskip

\noindent I. The case $\M_0\times \M_0$. The set
\begin{align*}
(\delta_0, \gamma_0)^{-1}(\leqslant) &= \big(\{\mbf^0, \top^0\} \times M_0\big) \cup \big(M_0\times \{\top^0, \mbt^0\}\big),
\end{align*}
does not contain $K_{00} = \Delta_{M_0}$, as it does not contain $\bot^0 \bot^0$,
whence it contains no subalgebra of $\M_0\times \M_0$.  Similarly,
\[
(\gamma_0, \delta_0)^{-1}(\leqslant) = \big(\{\mbf^0, \bot^0\} \times M_0\big) \cup \big(M_0\times \{\bot^0, \mbt^0\}\big),
\]
does not contain $K_{00} = \Delta_{M_0}$, as it does not contain $\top^0 \top^0$.
Hence $\Rsub{\delta_0\gamma_0} = \Rsub{\gamma_0\delta_0}  =\varnothing$.
The set
\[
(\gamma_0, \gamma_0)^{-1}(\leqslant) = \big(\{\mbf^0, \bot^0\} \times M_0\big) \cup \big(M_0\times \{\top^0, \mbt^0\}\big)
\]
contains ${\lez}$ but not ${\gez}$ as $\mbt^0\bot^0\notin (\gamma_0, \gamma_0)^{-1}(\leqslant)$.
It follows from the shape of $\Sub(\M_0 \times \M_0)$ (see Figure~\ref{fig:subalg-I&II}) that $\Rsub{\gamma_0\gamma_0} = \{\lez\}$, and hence
\[
\Rsub{\delta_0\delta_0} = \mathcal R\convsub{\delta_0'\delta_0'} = \mathcal R\convsub{\gamma_0 \gamma_0}= \{\lez\}\conv = \{\gez\}
\]
by Lemma~\ref{lem:sym}(c).
\smallskip

\noindent II. The case $\M_0\times \M_k$ and $\M_k\times \M_0$, for $k\in [1, n]$. Since
\[
\bot^0\bot^k\notin (\delta_0, \gamma_k)^{-1}(\leqslant) \quad\text{and}\quad \top^0\top^k\notin (\gamma_0, \gamma_k)^{-1}(\leqslant),
\]
these sets do not contain  $K_{0k} = \graph(g_k)\conv$, whence $\Rsub{\delta_0\gamma_k} = \Rsub{\gamma_0\gamma_k} =\varnothing$. The set
\[
(\gamma_0, \delta_k)^{-1}(\leqslant) = \big(\{\mbf^0, \bot^0\} \times M_k\big) \cup \big(M_0\times(M_k\comp \{\zero\})\big),
\]
contains $S^{0k}_\le$, but not $S^{0k}_\ge$ as $\top^0\zero^k \notin (\gamma_0, \delta_k)^{-1}(\leqslant)$. Similarly, the set
\[
(\delta_0, \delta_k)^{-1}(\leqslant) = \big(\{\mbf^0, \top^0\} \times M_k\big) \cup \big(M_0\times (M_k\comp \{\zero^k\})\big),
\]
contains $S^{0k}_\ge$, but not $S^{0k}_\le$ as $\bot^0\zero^k \notin (\gamma_0, \delta_k)^{-1}(\leqslant)$. Hence the shape of $\Sub(\M_0 \times \M_k)$ (see Figure~\ref{fig:subalg-I&II}) tells us that $\Rsub{\gamma_0\delta_k} = \{S^{0k}_\le\}$ and $\Rsub{\delta_0\delta_k} = \{S^{0k}_\ge\}$.

By Lemma~\ref{lem:sym}(c) we conclude that
\begin{align*}
\Rsub{\gamma_k\delta_0} &= \mathcal R\convsub{\delta_0'\gamma_k'} = \mathcal R\convsub{\gamma_0\delta_k} = \{S^{0k}_\le\}\conv = \{S^{k0}_\ge\}, \\
\Rsub{\gamma_k\gamma_0} &= \mathcal R\convsub{\gamma_0'\gamma_k'} = \mathcal R\convsub{\delta_0\delta_k} = \{S^{0k}_\ge\}\conv = \{S^{k0}_\le\},\\
\Rsub{\delta_k\gamma_0} &= \mathcal R\convsub{\gamma_0'\delta_k'} = \mathcal R\convsub{\delta_0\gamma_k} = \varnothing,
\text{ and}\\
\Rsub{\delta_k\delta_0} &= \mathcal R\convsub{\delta_0'\delta_k'} = \mathcal R\convsub{\gamma_0\gamma_k} = \varnothing.
\end{align*}

\smallskip

\noindent III. The case $\M_k\times \M_k$, for $k\in [1, n]$. The set $(\delta_k, \gamma_k)^{-1}(\leqslant)$ does not contain $K_{kk} = \Delta_{M_k}$ as it does not contain $\top^k\top^k$, for example. Hence $\Rsub{\delta_k\gamma_k} =\varnothing$.
The set
\[
(\gamma_k, \gamma_k)^{-1}(\leqslant) = \big(M_k\comp \{\one^k\} \times M_k\big) \cup \big(M_k\times \{\one^k\}\big)
\]
contain $\lek$ but not $\gek$ as it does not contain $\one^k\mbt^k$. Hence the shape of $\Sub(\M_k \times \M_k)$ (see Figure~\ref{fig:subalg-III&IV}) tells us that $\Rsub{\gamma_k\gamma_k} = \{\lek\}$. By Lemma~\ref{lem:sym}(c) it follows that
\[
\Rsub{\delta_k\delta_k} = \mathcal R\convsub{\delta_k'\delta_k'}  = \mathcal R\convsub{\gamma_k\gamma_k} = \{\lek\}\conv =\{\gek\}.
\]
Finally, the set
\[
(\gamma_k, \delta_k)^{-1}(\leqslant) = \big((M_k\comp \{\one^k\}) \times M_k\big) \cup \big(M_k\times (M_k\comp \{\zero^k\})\big)
\]
contains both $S^{kk}_\le$ and $S^{kk}_\ge$, and again the shape of the lattice
$\Sub(\M_k \times \M_k)$ (see Figure~\ref{fig:subalg-III&IV}) tells us that $\Rsub{\gamma_k\delta_k} = \{S^{kk}_\le, S^{kk}_\ge\}$.

\smallskip

\noindent IV. The case $\M_j\times \M_k$ and $\M_k\times \M_j$, for $0 < j < k$. Let $j, k\in [1, n]$ with $j < k$. The set $(\delta_j, \gamma_k)^{-1}(\leqslant)$ does not contain $K_{jk}= {\lejk}$ as it does not contain $\top^j\top^k$, for example. Similarly, the set $(\delta_j, \delta_k)^{-1}(\leqslant)$
does not contain $K_{jk}= {\lejk}$ as it does not contain $\mbf^j\zero^k$. Consequently, $\Rsub{\delta_j\gamma_k} =\Rsub{\delta_j\delta_k} =\varnothing$.
The set
\[
(\gamma_j, \delta_k)^{-1}(\leqslant) = \big((M_j\comp \{\one^j\}) \times M_k\big) \cup \big(M_j\times (M_k\comp \{\zero^k\})\big)
\]
contains both $S^{jk}_\le$ and $S^{jk}_\ge$, and the shape of 
$\Sub(\M_j \times \M_k)$ (see Figure~\ref{fig:subalg-III&IV}) tells us that $\Rsub{\gamma_j\delta_k} = \{S^{jk}_\le, S^{jk}_\ge\}$.
Finally, the set
\[
(\gamma_j, \gamma_k)^{-1}(\leqslant) = \big((M_j\comp \{\one\}) \times M_k\big) \cup \big(M_j\times \{\one\}\big)
\]
contains $\lejk$ but does not contain $S^{jk}_\le\cap S^{jk}_\ge$ as $\one^j\mbt^k \in S^{jk}_\le\cap S^{jk}_\ge \comp (\gamma_j, \gamma_k)^{-1}(\leqslant)$. Hence the shape of $\Sub(\M_j \times \M_k)$ (see Figure~\ref{fig:subalg-III&IV}) tells us that $\Rsub{\gamma_j\gamma_k} = \{\lejk\}$.

Applying Lemma~\ref{lem:sym}(c) once again, we conclude that
\begin{align*}
\Rsub{\gamma_k\delta_j} &= \mathcal R\convsub{\delta_j'\gamma_k'} = \mathcal R\convsub{\gamma_j\delta_k} = \{S^{jk}_\le, S^{jk}_\ge\}\conv = \{S^{kj}_\ge, S^{kj}_\le\},\\
\Rsub{\delta_k\delta_j} &= \mathcal R\convsub{\delta_j'\delta_k'} = \mathcal R\convsub{\gamma_j\gamma_k} = \{\lejk\}\conv = \{\gekj\},\\
\Rsub{\gamma_k\gamma_j} &= \mathcal R\convsub{\gamma_j'\gamma_k'} = \mathcal R\convsub{\delta_j\delta_k} = \varnothing, \text{ and}\\
\Rsub{\delta_k\gamma_j} &= \mathcal R\convsub{\gamma_j'\delta_k'} = \mathcal R\convsub{\delta_j\gamma_k} = \varnothing.
\end{align*}

\smallskip

The sets $\Rsub{\omega_1\omega_2}$ of piggyback relations, for $\omega_1, \omega_2\in \Omega$ are presented in the Table~\ref{table:multpig};
to save space the braces have been removed from the non-empty sets. The numbers in brackets will be used in the next subsection.

\begin{table}[ht]
 \caption{The sets $\mathcal R_{\omega_1 \omega_2}$ of piggyback relations}
 \label{table:multpig}

\centering
\begin{tabular}{|c|c||ccc|ccc|cc|}
\hline
\multicolumn{10}{|c|}{Piggyback relations for $0 < k$ and $0 < j < k$} \\
\hline
\rule[-6pt]{0pt}{10pt}Alg & & \multicolumn{3}{|c|}{$\M_0$} & \multicolumn{3}{|c|}{$\M_j$} & \multicolumn{2}{|c|}{$\M_k$}\\
\cline{2-10}
\rule[-0pt]{0pt}{10pt} &  $\omega_i$ & $\gamma_0$ & & $\delta_0$ & & $\gamma_j$  & $\delta_j$ & $\gamma_k$ & $\delta_k$\\
\hline
\hline
\rule[-6pt]{0pt}{16pt}$\M_0 $& $\gamma_0$ & $\lez${\tiny (1)}&  & $\varnothing$ & & & & $\varnothing$ & $S^{0k}_\le${\tiny (5)}\\
\rule[-6pt]{0pt}{0pt}$ $& $\delta_0$ & $\varnothing$ &  & $\gez ${\tiny (2)} & & & & $\varnothing$ & $S^{0k}_\ge${\tiny (6)}\\
\hline
\rule[-6pt]{0pt}{16pt}$\M_j $& $\gamma_j$ &  &  &  & & & & $\lejk ${\tiny (1)} & $S^{jk}_\le, S^{jk}_\ge ${\tiny (7)}\\
\rule[-6pt]{0pt}{0pt}$ $& $\delta_j$ &  &  &  & & & & $\varnothing$ & $\varnothing$\\
\hline
\rule[-6pt]{0pt}{16pt}$\M_k $& $\gamma_k$ & $S^{k0}_\le${\tiny (3)} &  & $S^{k0}_\ge${\tiny (4)}  & & $\varnothing$  &$S^{kj}_\ge,
S^{kj}_\le${\tiny (7)}  & $\lek${\tiny (1)} & $S^{kk}_\le, S^{kk}_\ge ${\tiny (7)}\\
\rule[-6pt]{0pt}{0pt}$ $& $\delta_k$ & $\varnothing$ &  & $\varnothing$ &  & $\varnothing$   & $\gekj${\tiny (2)} & $\varnothing$ & $\gek${\tiny (2)}\\
\hline
\hline
 \end{tabular}
\end{table}

\subsection{The proof of Theorem~\ref{thm:trans}}

We apply Theorem~\ref{thm:HilLeo}. Let $\A\in \CV_n$ and let $\A^\flat = \langle A; \wedge, \vee, \mbf_0, \mbt_0\rangle$ be its bounded-distributive-lattice
reduct. Define
\[
X = \bigcup\big\{\,X_k \mid k \in [0, n]\,\big\}\quad \text{and} \quad  Y = \bigcup\big\{\,X_k \times \Omega_k \mid k \in [0, n]\,\big\},
\]
where $X_k := \CV_n(\A, \M_k)$, for all $k\in [0, n]$. Endow $Y$ with the quasi-order $\leP$ and topology as described in Theorem~\ref{thm:HilLeo}.

\begin{lem}\label{lePorder}
The quasi-order $\leP$ on $Y$ is an order.
\end{lem}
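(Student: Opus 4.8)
Since Theorem~\ref{thm:HilLeo} already guarantees that $\leP$ is a quasi-order, the plan is simply to verify antisymmetry. So I would start by assuming $(x,\omega_1)\leP(y,\omega_2)$ and $(y,\omega_2)\leP(x,\omega_1)$, where $x\in X_j$, $\omega_1\in\Omega_j$, $y\in X_k$ and $\omega_2\in\Omega_k$ for some $j,k\in[0,n]$. By the definition of $\leP$ this forces both $\Rsub{\omega_1\omega_2}\ne\varnothing$ and $\Rsub{\omega_2\omega_1}\ne\varnothing$.

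The key step is then a single combinatorial observation, to be read straight off Table~\ref{table:multpig} (equivalently, off the computations in cases~I--IV above): the sets $\Rsub{\omega_1\omega_2}$ and $\Rsub{\omega_2\omega_1}$ are \emph{simultaneously} non-empty only when $j=k$ and $\omega_1=\omega_2$, in which case $\Rsub{\omega_1\omega_1}$ is the singleton $\{\lek\}$ or $\{\gek\}$ (namely $\{\lez\}$ or $\{\gez\}$ when $k=0$). The reason is that every ``mixed'' pair $\Rsub{\gamma_a\delta_b}$ or $\Rsub{\delta_a\gamma_b}$ has an empty partner, and every same-type pair $\Rsub{\gamma_j\gamma_k}$ or $\Rsub{\delta_j\delta_k}$ coming from \emph{distinct} sorts is either empty or consists of a ``stretched'' relation ($\lejk$, $\gekj$, or an $S$-relation) that runs from the lower-indexed sort to the higher-indexed one, or its converse, so that again the reverse set is empty. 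Thus only the on-sort pairs $(\gamma_k,\gamma_k)$ and $(\delta_k,\delta_k)$ survive.

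To finish, I would note that each of $\lez$, $\gez$, $\lek$, $\gek$ is an order on the relevant $\M_k$ (recall that $\lek$ is an order for all $k\in[0,n]$, and the converse of an order is an order). In the surviving case we therefore have an order relation $R$ on $M_k$ with $(x,y)\in R^X$ and $(y,x)\in R^X$, where $R^X$ is the pointwise extension of $R$ to $X_k=\CV_n(\A,\M_k)$; since $(x(a),y(a))\in R$ and $(y(a),x(a))\in R$ for every $a\in A$ and $R$ is antisymmetric, we get $x(a)=y(a)$ for all $a$, hence $x=y$ and $(x,\omega_1)=(y,\omega_2)$.

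I do not anticipate a genuine obstacle here: the only real work is the bookkeeping needed to confirm the combinatorial observation against Table~\ref{table:multpig}, the remaining argument being the routine fact that a pointwise extension inherits antisymmetry. The point that most deserves care is to check that a mixed on-sort pair such as $(\gamma_k,\delta_k)$ --- for which $\Rsub{\gamma_k\delta_k}=\{S^{kk}_\le,S^{kk}_\ge\}$ is non-empty --- has the empty partner $\Rsub{\delta_k\gamma_k}=\varnothing$, so that it cannot contribute a violation.
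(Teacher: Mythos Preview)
Your proposal is correct and follows essentially the same approach as the paper: both arguments reduce antisymmetry to the observation, read off Table~\ref{table:multpig}, that for every off-diagonal pair $(\omega_1,\omega_2)$ at least one of $\Rsub{\omega_1\omega_2}$, $\Rsub{\omega_2\omega_1}$ is empty, while each diagonal entry is a singleton containing a genuine order on the relevant~$M_k$. The paper phrases this as ``diagonal versus symmetric off-diagonal pairs'' rather than your ``mixed versus same-type carriers'', but the content is identical.
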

\begin{proof}
Let $(x, \omega_1), (y, \omega_2)\in Y$ with $(x, \omega_1) \leP (y, \omega_2)$ and $(y, \omega_2) \leP (x, \omega_1)$. Then $x\in X_j$, $y\in X_k$, $\omega_1\in \{\gamma_j, \delta_j\}$ and  $\omega_2\in \{\gamma_k, \delta_k\}$, for some $j, k\in [0, n]$, and by the definition of $\leP$ on $Y$, we have
\[
(x, y) \in R^X \text{ for some } R\in \Rsub{\omega_1 \omega_2}, \text{ and } (y, x) \in R^X \text{ for some } R\in \Rsub{\omega_2 \omega_1}.
\]

There are many cases to consider, but each corresponds to either a diagonal entry in Table~\ref{table:multpig} or to a symmetric pair of off-diagonal entries in the table. On the diagonal, where $\omega_1 = \omega_2$, the relation
 $R$ is one of $\lez$, $\gez$, $\lek$ and $\gek$, each of which is an order, so $(x, y), (y,x)\in R^X$ immediately gives $(x, \omega_1) = (y, \omega_2)$.
For each symmetric pair of off-diagonal entries in the table we have either $\Rsub{\omega_1 \omega_2} = \varnothing$ or $\Rsub{\omega_2 \omega_1} = \varnothing$, so these cases do not actually occur.
\end{proof}

Since $\leP$ is an order, Theorem~\ref{thm:HilLeo} now tells us that $\langle Y; {\leP}, \Tp\rangle$ is a Priestley space isomorphic to $\mathrm H(\A^\flat)$. Let
\[
\X = \mathrm D(\A) = \langle X_0 \du \cdots \du X_n  ; \mathcal G_{(n)}, \mathcal {S}_{(n)}, \Tp\rangle
\]
be the dual of~$\A$. Recall from Definition~\ref{def:P(X)} that
\[
\mathrm P(\X) = \langle X\du \widehat X; \leP, \Tp\rangle,
\]
where $\widehat X := \{\widehat x \mid x \in X\}$. Define $\eta\colon \mathrm P(\X) \to Y$ by
\[
\eta(x) = (x,\gamma_k) \text{ and } \eta(\widehat x) = (x,\delta_k)
\]
for all $x\in X_k$ and all $k\in [0, n]$. It is clear that $\eta$ is a homeomorphism between the underlying topological spaces of $\mathrm P(\X)$ and of the Priestley space $\langle Y; \leP, \Tp\rangle$. To complete the proof of Theorem~\ref{thm:trans} it remains to prove that $x_1 \leP x_2$ in $\mathrm P(\X)$ if and only if $\eta(x_1) \leP \eta(x_2)$ in $Y$. Once again, this can be read straight off Table~\ref{table:multpig}: the cells of the table corresponding to Condition~$(n)$ in Definition~\ref{def:P(X)} have been labelled~$(n)$. For example, using Part $(5)$ of the definition of $\leP$ on $\mathrm P(\X)$ and the cell labelled~$(5)$ in the table, along with the fact that
\[
(x, y) \in S^{0k}_\le \quad \iff \quad x = g_0(x) \lez g_k(y),
\]
we have, for all $x\in X_0$ and $\widehat y \in \widehat X\comp \widehat X_0$,
\begin{alignat*}{2}
x \leP \widehat y &\iff x\le g(y)&\qquad&\text{definition of $\leP$ on $\mathrm P(\X)$}\\
                  &\iff x \lez g_k(y)&&\\
                  &\iff (x, y) \in S^{0k}_\le&&\\
                  &\iff (x, \gamma_0)\leP (y, \delta_k)&\qquad &\text{definition of $\leP$ on $Y$}\\
                  &\iff \eta(x) \leP \eta(\widehat y).
\end{alignat*}
This completes the proof of Theorem~\ref{thm:trans}.\hfill\qed

\smallskip

Some observations on this proof lead us to close this paper by posing two questions.

Each of our piggyback relations is meet-irreducible in the appropriate subalgebra lattice $\Sub(\M_j \times \M_k)$. This is not always so. For example, the piggyback relations on the three-element Kleene algebra~$\K$ are calculated on page 155 of Davey and Priestley~\cite{DP-multi}; in Clark and Davey~\cite{CD98} they are denoted by $\leP$, $\geP$ and $\sim$. Each of these relations is meet-irreducible (see the diagram of $\Sub(\K^2)$ on page 247 of~\cite{CD98}). The remaining piggyback relation, $\Delta_{K_0} = \{ 00, 11\}$, is the bottom of $\Sub(\K^2)$ and not meet-irreducible.

\begin{prob}
Investigate the relationship between meet-irreducibles and piggyback relations
in the appropriate subalgebra lattice. In particular, give sufficient conditions under which all (multi-sorted) piggyback relations are meet-irreducible.
\end{prob}

Our Theorem~\ref{thm:HilLeo} is a special case of Cabrer and Priestley's description of the Priestley dual via piggyback relations~\cite[Theorem~2.3]{CP-coprod}. While their theorem guarantees only that $\leP$ is a quasi-order, in our case it was antisymmetric and hence was actually an order. For this to occur we would certainly need the set $\CM = \{\M_0, \dots, \M_n\}$ of algebras to be a minimal $\ISP$-generating set for the quasi-variety $\ISP(\CM)$. We would also make the set $\mathcal G$ of multi-sorted endomorphisms as large as possible and the set $\Omega$ of carriers as small as possible while guaranteeing that the separation condition $(S)$ of the Multi-sorted Piggyback Duality Theorem~\cite[Theorem 7.2.1]{CD98} is satisfied. But what else is required?

\begin{prob}
Give sufficient conditions for the quasi-order $\leP$ defined in~\cite[Theorem~2.3]{CP-coprod} to be an order.
\end{prob}

%%%%%%%%%%%%%%%%%%%%%%%%%%%%%%%%%%%%%%%%%%%%%%%%%%%%%%%%%%%%%%%%%%%%
\section*{Acknowledgements}
%%%%%%%%%%%%%%%%%%%%%%%%%%%%%%%%%%%%%%%%%%%%%%%%%%%%%%%%%%%%%%%%%%%

The first author is grateful to La Trobe University for his appointment as an Honorary Visiting Fellow in the School of Engineering and Mathematical Sciences for August 2019. 
The third author acknowledges his appointment as a Visiting Professor at the University of Johannesburg from 1 June 2020. 
The first and third author would like to thank La Trobe University for its hospitality during a visit there in August 2019.

%%%%%%%%%%%%%%%%%%%%%%%%%%%%%%%%%%%%
%\bibliographystyle{amsplain}

%%%%%%%%%%%%%%%%%%%%%%%%%%%%%%%%%%%%
\end{document}